\documentclass[aihp]{imsart}

\RequirePackage{amsthm,amsmath,amsfonts,amssymb}
\RequirePackage[numbers,sort&compress]{natbib}
\RequirePackage[colorlinks,citecolor=blue,urlcolor=blue]{hyperref}
\RequirePackage{graphicx}

\startlocaldefs
\theoremstyle{plain}
\newtheorem{axiom}{Axiom}
\newtheorem{claim}[axiom]{Claim}
\newtheorem{theorem}{Theorem}[section]
\newtheorem{lem}[theorem]{Lemma}
\newtheorem{prop}[theorem]{Proposition}
\newtheorem{coro}[theorem]{Corollary}
\theoremstyle{remark}
\newtheorem{defn}[theorem]{Definition}

\newtheorem{rem}{Remark}
\newtheorem*{fact}{Fact}
\def\RR{\mathbb{R}}
\def\ZZ{\mathbb{Z}}
\def\NN{\mathbb{N}}
\def\S{\mathcal{S}}
\def\V{\mathcal{V}}
\def\L{\mathcal{L}}
\def\T{\mathcal{T}}
\def\one{{\mathbf 1\!\!1}}
\def\fora{\forall\,}

\endlocaldefs

\begin{document}

\begin{frontmatter}
\title{Stochastic ordering, attractiveness and couplings in non-conservative particle systems}
\runtitle{Stochastic ordering of non-conservative particle systems}

\begin{aug}
\author[A]{\inits{R.}\fnms{Raúl}~\snm{Gouet}\ead[label=e1]{rgouet@dim.uchile.cl}\orcid{0000-0003-3125-6773}},
\author[B]{\inits{F.J.}\fnms{F. Javier}~\snm{López}\ead[label=e2]{javier.lopez@unizar.es}\orcid{0000-0002-7615-2559}}
\and
\author[B]{\inits{G.}\fnms{Gerardo}~\snm{Sanz}\ead[label=e3]{gerardo.sanz@unizar.es}\orcid{0000-0002-6474-2252}}
\address[A]{Depto. de Ingenier\'{\i}a Matem\'{a}tica and CMM (CNRS, IRL 2807), Universidad de Chile. Santiago, Chile\printead[presep={,\ }]{e1}}

\address[B]{Dpto. M\'{e}todos Estad\'\i sticos and BIFI, Facultad de Ciencias, Universidad de Zaragoza. Zaragoza, Spain\printead[presep={,\ }]{e2,e3}}
\end{aug}

\begin{abstract}
We analyse the stochastic comparison  of interacting particle systems allowing for multiple arrivals, departures and non-conservative jumps of individuals between sites. That is, if $k$ individuals leave site $x$ for site $y$, a possibly different number $l$  arrive at destination. This setting includes  new models, when compared to the conservative case, such as metapopulation models with deaths during migrations. It implies a sharp increase of technical complexity, given the numerous changes to consider. Known results are significantly generalised, even in the conservative case,  as no particular form of the transition rates is assumed.

We obtain necessary and sufficient conditions on the rates for the stochastic comparison of the processes and  prove their  equivalence with the existence of an order-preserving Markovian coupling. As a corollary, we get necessary and sufficient conditions for the attractiveness of the processes. A salient feature of our approach lies in the presentation of the coupling in terms of solutions to network flow problems. 

We illustrate the applicability of our results to a flexible family of population models described as interacting particle systems, with a range of parameters controlling  births, deaths, catastrophes or migrations. We provide explicit conditions on the parameters for the stochastic comparison and attractiveness of the models, showing their usefulness in studying their limit behaviour. Additionally, we give three examples of constructing the coupling. 

\end{abstract}

\begin{abstract}[language=french]
Nous étudions la comparaison stochastique des systèmes de particules en interaction permettant de multiples arrivées, départs et sauts non conservatifs d'individus entre les sites. Autrement dit, si $k$ individus quittent le site $x$ pour le site $y$, un nombre éventuellement différent $l$ arrive à destination. Ce cadre inclut de nouveaux modèles, par rapport au cas conservatif, tels que les modèles de métapopulation avec des décès pendant les migrations. Cela implique une augmentation nette de la complexité technique, compte tenu des nombreuses modifications à prendre en compte. Des résultats connus sont considérablement généralisés, même dans le cas conservatif, car aucune forme particulière des taux de transition n'est supposée.

Nous obtenons des conditions nécessaires et suffisantes sur les taux pour la comparaison stochastique des processus et prouvons leur équivalence avec l'existence d'un couplage markovien préservant l'ordre. En corollaire, nous obtenons des conditions nécessaires et suffisantes pour l'attractivité des processus. Une caractéristique saillante de notre approche réside dans la présentation du couplage en termes de solutions à des problèmes de flux de réseau.

Nous illustrons l'applicabilité de nos résultats à une famille flexible de modèles de population décrits comme des systèmes de particules en interaction, avec une gamme de paramètres contrôlant les naissances, les décès, les catastrophes ou les migrations. Nous fournissons des conditions explicites sur les paramètres pour la comparaison stochastique et l'attractivité des modèles, démontrant leur utilité dans l'étude de leur comportement limite. De plus, nous donnons trois exemples de construction du couplage.
\end{abstract}

\begin{keyword}[class=MSC]
\kwd[Primary ]{60K35}
\kwd{82C22}
\kwd[; secondary ]{60J25}
\kwd{92D25}
\end{keyword}

\begin{keyword}
\kwd{Stochastic order}
\kwd{attractiveness}
\kwd{coupling}
\kwd{interacting particle system}
\kwd{migration process}
\end{keyword}

\end{frontmatter}

\section{Introduction}

\label{sec:intro}
Stochastic comparison is an essential tool in the study of interacting particle systems (IPS).  See Chapter 5 of \cite{MS02} for basic definitions and general results on stochastic ordering between random processes. Among many orders between random variables or processes, the stochastic order defined through the coordinate-wise ordering of configurations is widely used in IPS, as it helps to determine the set of invariant measures of the processes; see \cite{Lig} and references therein. 
  Since IPS are Markov processes, usually defined in terms of their transition rates, conditions on the rates are needed for  their stochastic comparison. Ideally these conditions should be easy to check and  write via inequalities including only each site and its neighbours. The stochastic comparison of continuous-time Markov chains (i.e., for countable spaces) is well understood.  Necessary and sufficient conditions on the rates for comparability are given in \cite{Mas}. These conditions have been applied to different models, including birth-death-migration processes on finite graphs; in particular, sufficient conditions for comparability of a fairly general class of these processes are found in \cite{DS95}, while necessary and sufficient conditions are given in \cite{DLS14}, when migrations only involve two nodes at a time. In the case of Markov processes on uncountable state spaces, a general comparison result, as seen in the countable case, does not exist but results similar to those  in \cite{Mas} are known in a variety of settings. For instance,  sufficient conditions are given in \cite{BL94} for the comparability of jump processes (not necessarily Markovian) on partially ordered Polish spaces.  Also, necessary and sufficient conditions for diffusions on $\RR^n$ are presented in \cite{CW93}. The basic result on comparability of IPS is Theorem III.1.5 of \cite{Lig}, for spin systems (i.e. 0-1 IPS where only one site changes its value at each transition). This was  extended in \cite{FF97} to the case in which the state space of each site is a finite set endowed with a partial order. For IPS where more than one site can change at each transition, the situation is more complicated. Necessary and sufficient conditions for 0-1 IPS, built up from a spin system and an exclusion process, are found in \cite{FF97}. Sufficient conditions are given in \cite{DLS04} for the stochastic comparison of a class of IPS involving changes at two sites, with an arbitrary partial order and, under additional restrictions, these conditions are also shown to be necessary. In the present work we assume that the set of values at each site has a total order, which is by far the most common situation  in the literature.  Closely related papers devoted to the stochastic comparison of infinite volume migration processes are \cite{Bor} and \cite{GS}. In \cite{GS} 
necessary and sufficient conditions are given for the attractiveness of processes with jumps of  $k\ge1$ individuals. The inclusion of arrivals and departures  in that model is analysed in \cite{Bor}, where sufficient conditions for comparability are obtained; moreover, when both processes have equal matrices defining their jump rates, the conditions are also necessary.   In particular, the conditions are necessary and sufficient for attractiveness, since this involves the comparison of a process with itself.  In \cite{Bor} and \cite{GS} the  proof is based on the construction of an explicit order-preserving Markovian coupling (OMC for short). In   \cite{Bor} this construction is rather difficult and lengthy, requiring a detailed analysis of the transitions of the processes; see pages 121--141 in that paper.

 A problem related to  the comparability of two processes is the existence of an OMC between them. It is known (Theorem 5 in \cite{KKO}) that the comparability of two Markov processes, on a partially ordered Polish space, is equivalent to the existence of an order-preserving coupling, but the question is if such coupling can be chosen to be a Markov process. 
 The answer to this question is positive in several instances, such as Markov chains on countable state spaces \cite{LMS}, while it is negative in the case of multidimensional diffusions \cite{WX97}. In the case of IPS, the answer is positive for 0-1 spin systems, via the so-called basic coupling (Section III.1 in \cite{Lig}). For more general IPS the problem was addressed in \cite{FF97} but an answer was only found under restrictive conditions on the rates. For IPS where only one particle can change at each transition, the question was answered (in the positive) in \cite{LS}. More recently, a positive answer has also been provided for general exclusion processes in \cite{GS23}. General theory and applications of Markovian couplings can be found in Chapter 2 of \cite{chen2006}.

Our goal in this paper is to study the comparability of more general IPS  than those considered in \cite{Bor} and \cite{GS}. We work with processes allowing arrivals, departures and jumps of individuals but  jumps need not be conservative, that is, a batch of $k$ individuals leaving site $x$ arrives at site $y$ as a batch of $l$ individuals, where $k,l\ge1$. Allowing $k\ne l$ can be interpreted as the batch being augmented by some individuals entering the system ($k<l$) or reduced by some individuals leaving the system $(k>l)$. The latter situation includes the possibility of deaths during migrations, which has been considered in the literature of population dynamics; see \cite{Arr03}, \cite{Gri01}, \cite{HAM}  and \cite{Zho04}. Note that IPS having jumps with $k\ne l$ cannot be analysed using the results in \cite{DLS04}, since condition (2) in that paper is not satisfied, except for some particular cases such as  when births, deaths and migrations are restricted to a single individual or one of the processes does not allow migration (Sections 5.1, 5.2 in that paper). 

In our main result we establish necessary and sufficient conditions on the rates for the stochastic comparability of the processes and prove its equivalence with the existence of an OMC. As a corollary, we give  necessary and sufficient conditions for attractiveness.  Moreover, we provide an algorithm for the construction of the OMC, whose rates can be found as the solutions to network flow problems.  Network flow theory  has been applied to the construction of couplings of Markov processes in \cite{DLS04} and \cite{LS}.  Having an OMC is useful because it can be a powerful tool in the study of the long term behaviour of IPS; see, e.g. \cite{Lig13} in the case of spin systems;  \cite{Sto20}, \cite{Tzi19}, for processes where only one site changes its value at each transition and \cite{BMRS}, \cite{FAJ2016}, \cite{GS}, \cite{GS23}, \cite{SS18}, for migration processes. An interesting feature of our approach is the possibility of considering an objective function in the network flow problems so that the coupling has, besides the preservation of the order,  properties potentially useful in the study of the processes (see Remark \ref{objetivo}).

Our results significantly extend those of \cite{Bor} and \cite{GS}. First we allow migrations to be non-conservative.  This includes new  models (see Section \ref{secex}) and implies a higher complexity in the construction of the  OMC, since there are many more changes to consider. In particular, it seems that a construction of the coupling following the ideas in \cite{Bor} is not possible. Second,  the models in \cite{Bor} and \cite{GS} assume that the jump rates of particles from site $x$ to site $y$ depend on the number of individuals at $x$ and $y$, but not on the number of individuals in the rest of sites. While this framework is sufficiently general to include many interesting models, such as the generalized misanthrope process, it does not encompass others, such as threshold models, for example. We do not make such  an assumption.
Third, the conditions in \cite{Bor} are necessary and sufficient for stochastic comparability when the matrices $p$ appearing in the rates of the two processes are equal, but only sufficient conditions are given when they are different (see Theorem 2.4 and Corollary 3.28 in \cite{Bor}). Our conditions \eqref{eq:c1} and \eqref{eq:c2} are necessary and sufficient for stochastic comparison,  without any further constraints on the rate structures. 

The paper is organized as follows. We begin with the definition of the processes and notation used throughout the paper. In Theorem \ref{teorema} of Section \ref{sec:stochorder} we state the main result of the paper, specifically, the equivalence between conditions \eqref{eq:c1} and \eqref{eq:c2} with the stochastic comparability 
 of the processes and the existence of an OMC. The initial steps of the proof are   presented in this section as well. Also, Corollary \ref{corolario_princiipal} gives necessary and sufficient conditions for attractiveness.  Section \ref{tnfp} is dedicated to stating and solving  several network flow problems, used in the construction of an OMC. The construction itself, along with the remaining steps of the proof of Theorem \ref{teorema}, is detailed in Section \ref{sec:coc}. 
  In Section \ref{secex} we apply our results to an extended version of the models considered in \cite{Bor2012}, including non-conservative migrations, with rates depending on the neighbours. For these models we give conditions for stochastic comparison and attractiveness, showing their applicability in the study of their limit behaviour. We also illustrate the construction of the coupling in three particular instances.
 
\subsection{Notation and preliminaries}
\label{sec:notation}
This paper focuses on interacting particle systems (IPS) $(\zeta_t)_{t\ge0}$, ($(\zeta_t)$ for short), with state space $\Omega=W^\S$, where $\S\subseteq\ZZ^d$ and $W\subseteq\NN\cup\{0\}$. As usual, $\ZZ, \NN$ denote the sets of integers and positive integers, respectively.

Transitions are described through local maps $\sigma_x^{\pm k}, \sigma_{xy}^{\pm k,l}$ on $\Omega$ (also referred to as ``changes''), defined by
\begin{equation*}
(\sigma_x^{+k}\zeta)(z)= \begin{cases}\zeta(x)+ k &\mbox{if }z=x,\\
\zeta(z) & \mbox{if } z\ne x,
\end{cases}\qquad\qquad\qquad (\sigma_x^{-k}\zeta)(z)= \begin{cases}\zeta(x)-k &\mbox{if }z=x,\\
\zeta(z) & \mbox{if } z\ne x,
\end{cases}
\end{equation*}
\begin{equation*}
(\sigma_{xy}^{+ k,l}\zeta)(z)= \begin{cases}\zeta(x)+ k &\mbox{if }z=x, \\ \zeta(y)- l&\mbox{if } z=y,\\
\zeta(z) & \mbox{if }  z\not\in \{x,y\},
\end{cases}\qquad\quad\, (\sigma_{xy}^{- k,l}\zeta)(z)= \begin{cases}\zeta(x)- k &\mbox{if }z=x, \\ \zeta(y)+ l&\mbox{if } z=y,\\
\zeta(z) & \mbox{if }  z\not\in \{x,y\},
\end{cases}
\end{equation*}
where  $x,y\in \S, x\ne y, k,l\in\NN$ and $\zeta$ is in the domain of the corresponding map. The  domain of map $a$ is defined as $\Omega_a=\{\zeta\in\Omega: a\zeta\in\Omega\}$. The set of all changes (maps) involving site $x$ and possibly another site, is denoted by $C^x$ and the set of all changes is denoted by $C$. That is, 
\begin{equation*}
	C^x=\{\sigma_x^{+ k}, \sigma_x^{- k}, \sigma_{xy}^{+k,l}, \sigma_{xy}^{-k,l}: k,l\in \NN, y\in \S\}\quad \text{and}\quad C=\bigcup_{x\in \S}C^x.
\end{equation*}
For simplicity, given $a\in C, \zeta\in\Omega_a$, we write $\zeta_a$ instead of $a\zeta$ and denote by $c_a(\zeta)$ its rate. With this notation, the infinitesimal generator $\cal L$ of $(\zeta_t)$, acting on a function $h:\Omega\to\RR$, can be  written as 
\begin{equation}
	\label{eq:generatoreta}
	\L h(\zeta)=\!\!\sum_{a\in C:\zeta\in \Omega_a}\!\!c_a(\zeta)(h(\zeta_a)-h(\zeta)), \quad\zeta\in\Omega.
\end{equation}
Also, if $\zeta\not\in\Omega_a$,  we may set $c_a(\zeta)=0$ and define $\zeta_a$ arbitrarily so that the generator in \eqref{eq:generatoreta}, and similar expressions, become slightly simpler because no explicit mention of $\Omega_a$ is needed. 

\subsection{Existence of the processes}
\label{sec:existence}
General  results about the existence of IPS when the state space $\Omega$ is finite or compact can be found, for instance, in \cite{Lig}. However, since in our context $W$ is possibly infinite, $\Omega$ may not be compact and such results are not applicable. Instead, we can invoke the theory established in \cite{Pen}, which depends  on the way sites interact.

For that purpose we assume that $\S$ is endowed with a distance $d_\S$, related, for example, to the $L_1$ norm, which is used to define neighbourhoods as follows: For a given $\delta>0$ and for all $x,y\in\S$, let $\V(x)=\{z\in \S: d_\S(x,z)\le\delta\}$ and write $y\sim x$ if $y\ne x$ and $y\in \V(x)$. Of course, $x\sim y$ and $y\sim x$ are equivalent. Only migrations between $x\sim y$ are allowed in the processes, that is, $c_a(\zeta)=0, \fora \zeta\in \Omega, a=\sigma_{xy}^{\pm k,l}$, such that $y\not\in\V(x)$.
Also, let $\mathcal{W}(x)$ be the set of sites $z\in\mathcal{S}$ such that the rate of change of $x$ depends on $z$, that is, $c_a(\zeta)$ does not depend on $\zeta(z)$, $\fora a\in C^x, z\not\in\mathcal{W}(x), \zeta\in\Omega$. We assume that the cardinality of $\mathcal{W}(x)$ is uniformly bounded which, along with the definition of $\V(x)$, implies condition (2.2) in \cite{Pen}.

We also make the following boundedness assumption
\begin{equation}\label{condexist2}
	\sup\left\{\sum_{a\in C^x}c_a(\zeta): x\in\S, \zeta\in\Omega\right\}<\infty,
\end{equation}
which clearly entails (2.4) in \cite{Pen}. Therefore, under the assumptions discussed above,  Theorem 2.1 of that paper guarantees that \eqref{eq:generatoreta} is a generator that defines a unique Markov process.
\subsection{Stochastic ordering}
The natural order on $W$ induces the coordinate-wise (or site-wise) order on $\Omega$, defined by  
\begin{equation*}
	\eta\le\xi\quad\text{ if }\quad\eta(x)\le\xi(x),\,\fora x\in \S,
\end{equation*}
 which in turn induces an order on the set of probabilities on $\Omega$, given by 
\begin{equation*}
	 \mu\le\nu\quad \text{if} \quad\int fd\mu\le\int fd\nu,\quad \fora f:\Omega\to\mathbb{R}\text{ increasing}.
\end{equation*} 
Stochastic domination between two processes $(\eta_t)$ and $(\xi_t)$ with respective semigroups $\T_1,\T_2$, denoted  $(\eta_t)\le_{st}(\xi_t)$, is defined as follows: 
\begin{equation*}(\eta_t)\le_{st}(\xi_t) \quad \text{if} \quad \mu\le\nu \ \text{implies} \ \mu \T_1(t)\le \nu \T_2(t), \ \fora\, t\ge0.\end{equation*} A process 
$(\eta_t)$ is attractive if $(\eta_t)\le_{st}(\eta_t)$.

Further, an OMC of $(\eta_t)$ and $(\xi_t)$ is a bivariate Markov process $((\eta'_t,\xi'_t))$ on $\Omega\times \Omega$, with marginals equally distributed as $(\eta_t)$ and $(\xi_t)$, such that $P_{\eta,\xi}[\eta'_t\le\xi'_t]=1, \fora\eta\le\xi, t\ge0$, where $P_{\eta,\xi}$ is the distribution of $(\eta'_t,\xi'_t)$, starting at $(\eta,\xi)$.

\section{Main result} 

\label{sec:stochorder}
For $x\in \S$ and $\eta,\xi\in\Omega$ such that $\eta\le\xi$, let
\begin{equation*}
	R_1^{x}=\{a\in C^{x}: \eta_a(x)>\xi(x)\}, \quad	R_2^{x}=\{b\in C^{x}: \eta(x)>\xi_b(x)\}.
\end{equation*}
Also,
for any $D_1\subseteq R_1^{x}$, $D_2\subseteq R_2^{x}$, let
\begin{equation*}
	D_1^\uparrow=\{b\in C^{x}:\ \exists a\in D_1\text{ s.t. }\eta_a\le \xi_b\},\quad D_2^\downarrow=\{a\in C^{x}:\ \exists b\in D_2\text{ s.t. }\eta_a\le \xi_b\}.
\end{equation*}
\begin{rem}
	\label{rem:changes}
In the definitions above, it is important to bear in mind that $R_1^{x}, R_2^{x}$ depend on $\eta$ and $\xi$ but this dependence is not shown, for the sake of simplicity. Also, $R_1^{x}$ ($R_2^{x}$)  and $D_2^\downarrow$ ($D_1^\uparrow$) only contain changes $a$ such that $\eta\ (\xi)\in \Omega_a$.
\end{rem}
Suppose that  $(\eta_t), (\xi_t)$ are two IPS with respective semigroups $\T_i$ and generators 
\begin{equation}
	\label{eq:genLi}
	\L_i h(\zeta)=\sum_{a\in C}c_a^i(\zeta)(h(\zeta_a)-h(\zeta)),\  i=1,2.
\end{equation}
and consider the following conditions on their rates:
\begin{alignat}{1}
	\label{eq:c1}
	\sum_{a\in D_1}c_a^1(\eta)&\le\sum_{b\in D_1^\uparrow}c^2_b(\xi), \quad\fora\ D_1\subseteq R_1^{x},\\
	\label{eq:c2}
	\sum_{b\in D_2}c^2_b(\xi)&\le\sum_{a\in D_2^\downarrow}c_a^1(\eta), \quad\fora\ D_2\subseteq R_2^{x}.
\end{alignat}

Expressions \eqref{eq:c1} and \eqref{eq:c2} are analogous to (5) and (6) in \cite{DLS14},  for the comparability of multicomponent systems on a countable state space. In that article they are shown to be necessary and sufficient because they are equivalent to the classic conditions for continuous-time Markov chains; see \cite{LMS} and \cite{Mas}.

The main result of the paper is the following.
\begin{theorem}
	\label{teorema} 
	Let $(\eta_t)$, $(\xi_t)$ be two IPS with respective semigroups $\T_1, \T_2$ and generators $\L_1, \L_2$, given in \eqref{eq:genLi}.
	The following statements are equivalent:
	\begin{enumerate}\item[(a)]  $(\eta_t)\le_{st}(\xi_t)$.\\
	\item[(b)]  Conditions \eqref{eq:c1} and \eqref{eq:c2} hold $\fora\, x\in \S, \eta,\xi\in\Omega $  \textup{s.t.}  $\eta\le\xi$.\\
	\item[(c)] There exists an OMC between $(\eta_t)$ and $(\xi_t)$.\end{enumerate}
	\end{theorem}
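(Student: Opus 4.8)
The plan is to establish the cycle of implications $(c)\Rightarrow(a)\Rightarrow(b)\Rightarrow(c)$, which is the natural route since each step has a clear mechanism. The implication $(c)\Rightarrow(a)$ is the easiest: if an OMC $((\eta'_t,\xi'_t))$ exists, then starting from $\mu\le\nu$ one couples the initial configurations so that $\eta'_0\le\xi'_0$ almost surely (possible by the Strassen-type characterization, Theorem~5 in \cite{KKO}), and since the coupling preserves the order for all $t$ and its marginals are the original processes, one reads off $\mu\T_1(t)\le\nu\T_2(t)$ by testing against increasing functions. This is standard and I would dispatch it quickly.

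For $(a)\Rightarrow(b)$ I would argue by contradiction, extracting the generator-level inequalities \eqref{eq:c1} and \eqref{eq:c2} from the macroscopic ordering. Fix $x\in\S$ and $\eta\le\xi$, and suppose some $D_1\subseteq R_1^x$ violates \eqref{eq:c1}. The idea is that starting the coupled pair from $(\eta,\xi)$, the rate at which the $\eta$-process produces a change $a\in D_1$ that \emph{overshoots} $\xi(x)$ cannot be matched by any $\xi$-transition that keeps the coordinate-wise order at site $x$; the set $D_1^\uparrow$ collects exactly those $\xi$-changes $b$ for which the post-jump states can still satisfy $\eta_a\le\xi_b$. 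If the total $\eta$-rate into "bad" states exceeds the total compensating $\xi$-rate, then for small $t$ one can build an increasing function $f$ (for instance an indicator of an up-set separating the configurations at site $x$) whose expectation under the $\eta$-process strictly exceeds that under the $\xi$-process, contradicting $(a)$. The symmetric argument with $R_2^x$, $D_2$, $D_2^\downarrow$ yields \eqref{eq:c2}. The delicate point here is passing from the infinitesimal rates to a genuine violation of $\le_{st}$ at a fixed positive time, which requires controlling the contribution of the other sites and higher-order terms; the uniform boundedness \eqref{condexist2} and the local dependence structure (bounded $\mathcal{W}(x)$, neighbourhoods $\V(x)$) are what make this short-time expansion legitimate.

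The implication $(b)\Rightarrow(c)$ is the heart of the theorem and the main obstacle, since it requires an \emph{explicit and Markovian} construction of the coupling rather than a mere existence argument. Here I would follow the strategy announced in the introduction: interpret conditions \eqref{eq:c1} and \eqref{eq:c2} as feasibility conditions for a family of network flow problems, one for each ordered pair $(\eta,\xi)$ with $\eta\le\xi$, and let the coupled transition rates be the optimal flows. Concretely, for a fixed site $x$ one sets up a bipartite transportation network whose sources are the $\eta$-changes in $R_1^x$ (with capacities $c^1_a(\eta)$) and whose sinks are the $\xi$-changes that can absorb them while preserving the order, together with a symmetric network governed by $R_2^x$; the reachability sets $D_1^\uparrow$ and $D_2^\downarrow$ encode which edges are admissible. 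The content of \eqref{eq:c1}–\eqref{eq:c2} is precisely the Hall/max-flow–min-cut condition guaranteeing a feasible flow saturating the marginal rate constraints, so that the resulting bivariate rates reproduce $\L_1$ and $\L_2$ as marginals while only generating jumps from $\{\eta\le\xi\}$ into $\{\eta\le\xi\}$. I would then verify that these flow-derived rates define a legitimate generator on $\Omega\times\Omega$ (invoking the existence theory of \cite{Pen} again, via the inherited boundedness), that the diagonal order-set $\{(\eta,\xi):\eta\le\xi\}$ is absorbing, and that the marginals coincide with the given processes.

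I expect the principal difficulties to be twofold. The combinatorial core — showing that \eqref{eq:c1} and \eqref{eq:c2} are \emph{exactly} the solvability conditions for the flow problems, with no gap between necessity and sufficiency — is where the non-conservative feature ($k\ne l$) bites hardest, because the multitude of admissible changes greatly enlarges the networks and one must check that the up-set/down-set closures $D_1^\uparrow$, $D_2^\downarrow$ interact correctly with the order. This is presumably the work deferred to Section~\ref{tnfp} and Section~\ref{sec:coc}. The second difficulty is purely measure-theoretic: one must ensure the flow-valued rates depend on $(\eta,\xi)$ measurably and remain uniformly bounded so that the coupled generator genuinely defines a Markov process on the non-compact space $\Omega\times\Omega$; I would handle this by selecting a canonical (e.g. lexicographically minimal or cost-minimizing) optimal flow, which also opens the door to the optimization refinement mentioned in Remark~\ref{objetivo}.
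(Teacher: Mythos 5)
Your overall architecture---the cycle $(c)\Rightarrow(a)\Rightarrow(b)\Rightarrow(c)$---is the paper's, and your first two implications are essentially its arguments. For $(a)\Rightarrow(b)$ the paper works directly rather than by contradiction: domination plus the generator limit give $\mathcal{L}_1h(\eta)\le\mathcal{L}_2h(\xi)$ for any bounded increasing cylinder function $h$ with $h(\eta)=h(\xi)$, and the up-set indicator $h(\zeta)=\sup_{a\in D_1}\one_{\{\eta_a\le\zeta\}}$ turns this into \eqref{eq:c1}, since $\mathcal{L}_1h(\eta)\ge\sum_{a\in D_1}c_a^1(\eta)$ while $\mathcal{L}_2h(\xi)=\sum_{b\in D_1^\uparrow}c_b^2(\xi)$ exactly. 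Your contrapositive version is the same computation; the short-time control you flag as delicate is already contained in the convergence $(\T_i(t)h-h)/t\to\mathcal{L}_ih$ for such $h$, so no extra estimates are needed there.

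The genuine gap is in $(b)\Rightarrow(c)$. Your plan solves one bipartite flow problem per site and then merges the resulting rates into a coupling generator. But a migration change $\sigma_{xy}^{\pm k,l}$ involves two sites, so it appears as a node in the network at $x$ \emph{and} in the network at $y$; flows $f^x$ and $f^y$ chosen independently will in general pair this single change with different partners in the two problems, and when you sum the coupled rates attached to it over both problems you exceed (double-count) its true rate $c_a^1(\eta)$, so the marginal verification fails. This overlap is precisely what the paper isolates as the new difficulty relative to \cite{LS} and \cite{DLS04}, where disjointness hypotheses allow the direct merging you describe, and resolving it occupies most of Sections \ref{tnfp} and \ref{sec:coc}: after the single-site problems $P^x$, one must pose the pair problems $P^{xy}$ of Section \ref{secprob2}, whose arc capacities are expressed in terms of the already-computed single-site flows (e.g.\ the upper bounds $f^x(b,B_1^{+xy})$ and $f^x(B_2^{-xy},a)$ in Table \ref{tab:P2xybounds}), prove their feasibility through the auxiliary problems $P_1^{xy+}$, $P_1^{xy-}$, $P_2^{xy+}$, $P_2^{xy-}$, and only then assemble the generator of Definition \ref{def:gencoup}, in which two-site changes are coupled via $f^{xy}$ and the single-site couplings carry remainder rates such as $c_a^1(\eta)-f^x(B_2^{-x},a)-\sum_{y\sim x}f^{xy}(B_2^{-xy},a)$ in \eqref{eq:gen2}, whose non-negativity is itself a claim requiring the interplay between the bounds of $P^{xy}$ and $P^x$. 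Nothing in your outline produces this second, consistency-enforcing layer of flow problems, and without it the step ``verify that these flow-derived rates reproduce $\mathcal{L}_1$ and $\mathcal{L}_2$ as marginals'' is false, not merely unproved. (Your measurability worry, by contrast, is not an obstacle the paper has to confront: for each fixed $(\eta,\xi,x)$ the flows are solutions of finite problems determined by the local rates, and the existence of the coupled IPS is settled by checking Penrose's conditions, as done at the end of the proof of Proposition \ref{prop:gencoup}.)
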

	\begin{proof} 
			Since clearly (c) implies (a), it remains to prove that (a) implies (b) and (b) implies (c). We show here that (a) implies (b) and leave the (more difficult) proof of the second implication for Sections \ref{tnfp} and \ref{sec:coc}, where an OMC is presented in Definition \ref{def:gencoup} and validated in Proposition \ref{prop:gencoup}. 
	
	Suppose that $(\eta_t)\le_{st}(\xi_t)$ and let $\eta\le\xi$. By the definition of generator,  
	$(\T_i(t)h(\zeta)-h(\zeta))/t\to\L_ih(\zeta)$, as $t\to0$, for all $\zeta\in\Omega,\ i=1,2,$ and any bounded
	function $h:\Omega\to\mathbb{R}$, depending on a finite number of sites. Moreover, if  $h$ is increasing and such that $h(\eta)=h(\xi)$, we get 
	\begin{equation*}
		\label{eq:L1leL2}
	{\cal L}_1h(\eta)\le{\cal L}_2h(\xi).
	\end{equation*}
	For $x\in \S$ and $D_1\subseteq R_1^{x}$, let  $h(\zeta)=\sup\limits_{a\in D_1}\one_{\{\eta_a\le\zeta\}}.$ Observe that $h$ is increasing and that $h(\eta)=h(\xi)=0$, by the definition of $R_1^{x}$. So,
	\begin{equation}
		\label{eq:L1}
		\sum_{a\in D_1}c^1_a(\eta)=\sum_{a\in D_1}c_a^1(\eta)h(\eta_a)\le \sum_{a\in C}c_a^1(\eta)h(\eta_a)={\cal L}_1h(\eta).
	\end{equation}		
	 We claim that $c_b^2(\xi)h(\xi_b)=0$,  $\fora b\in C\setminus C^{x}$. Indeed, if $b\in C\setminus C^{x}$ then either  $c_b^2(\xi)=0$  and the claim holds, or $c_b^2(\xi)>0$.  In the latter case, since site $x$ is not affected by $b$,  we have $\xi_b(x)=\xi(x)<\eta_a(x)$,  $\fora a\in D_1$ (by the definition of $R_1^{x}$), which yields $h(\xi_b)=0$ and the claim is verified. Further, if $b\in C^{x}\setminus D_1^\uparrow$, then there is no $a\in D_1$ such that $\eta_a\le \xi_b$, and so $h(\xi_b)=0$ as well. Also, $h(\xi_b)=1$,  $\fora b\in D_1^\uparrow$. Therefore, since $h(\xi)=0$, 
\begin{equation}
	\label{eq:L2}
	{\cal L}_2h(\xi)=\sum_{b\in  C}c_b^2(\xi)h(\xi_b)=\sum_{b\in D_1^\uparrow}c^2_b(\xi),
\end{equation} 
and  \eqref{eq:c1} follows from \eqref{eq:L1}, \eqref{eq:L2} and the inequality ${\cal L}_1h(\eta)\le {\cal L}_2h(\xi)$. The proof of \eqref{eq:c2} is analogous and is omitted.
\end{proof}

As a direct consequence of Theorem \ref{teorema} we obtain the following result on attractiveness.
\begin{coro}
	\label{corolario_princiipal} 
	Let $(\eta_t)$ be an IPS with generator \eqref{eq:generatoreta}. Then $(\eta_t)$ is attractive if and only if conditions \eqref{eq:c1} and \eqref{eq:c2} hold with $c^1_a(\eta)=c_a(\eta), c^2_a(\xi)=c_a(\xi)$,  $\fora\,  x\in \S, \eta,\xi\in\Omega$ such that $\eta\le\xi$.
\end{coro}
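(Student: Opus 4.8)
The plan is to derive the corollary directly from Theorem \ref{teorema} by specializing to the case of a single process compared with itself. Recall that, by definition, $(\eta_t)$ is attractive precisely when $(\eta_t)\le_{st}(\eta_t)$. This is exactly statement (a) of Theorem \ref{teorema} applied with both processes taken to be $(\eta_t)$, that is, with $\T_1=\T_2=\T$ and $\L_1=\L_2=\L$, so that $c^1_a=c^2_a=c_a$ for every change $a\in C$.

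First I would invoke the equivalence of (a) and (b) in Theorem \ref{teorema}. Taking $(\xi_t)=(\eta_t)$, the theorem tells us that $(\eta_t)\le_{st}(\eta_t)$ holds if and only if conditions \eqref{eq:c1} and \eqref{eq:c2} hold for all $x\in\S$ and all pairs of configurations satisfying the order relation. The only point to be careful about is notational: in the statement of Theorem \ref{teorema} the two configurations appearing in \eqref{eq:c1} and \eqref{eq:c2} are denoted $\eta$ and $\xi$ with $\eta\le\xi$, and the rates $c^1$ and $c^2$ are evaluated at $\eta$ and $\xi$ respectively. In the attractive case these become $c_a(\eta)$ on the left-hand sides and $c_b(\xi)$ on the right-hand sides, with $\eta\le\xi$ ranging over all ordered pairs in $\Omega$. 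This matches verbatim the formulation in the corollary, so the substitution $c^1_a(\eta)=c_a(\eta)$, $c^2_a(\xi)=c_a(\xi)$ is the only identification required.

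Since Theorem \ref{teorema} establishes the equivalence of (a), (b) and (c) for arbitrary pairs of IPS with generators of the form \eqref{eq:genLi}, and a single process $(\eta_t)$ trivially furnishes such a pair when compared with itself, the corollary follows immediately. I would write the proof in essentially one line: apply Theorem \ref{teorema} with $(\xi_t)=(\eta_t)$ and use the definition of attractiveness. There is no genuine obstacle here; the only thing to verify is that the generator \eqref{eq:generatoreta} of $(\eta_t)$ indeed has the form \eqref{eq:genLi} required by the theorem, which is immediate upon identifying $c^1_a=c^2_a=c_a$. The entire content of the corollary is the observation that attractiveness is self-comparison, and that Theorem \ref{teorema} is stated with sufficient generality (no constraints relating the two rate structures) to accommodate this special case.
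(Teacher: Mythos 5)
Your proposal is correct and matches the paper's treatment exactly: the paper presents Corollary \ref{corolario_princiipal} as ``a direct consequence of Theorem \ref{teorema}'' with no separate proof, the implicit argument being precisely your specialization $(\xi_t)=(\eta_t)$, $c^1_a=c^2_a=c_a$, combined with the definition of attractiveness as $(\eta_t)\le_{st}(\eta_t)$. Nothing further is needed.
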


 \begin{rem}\label{condicionesgenerales}  In order  to fulfil  (2.2) and (2.4) of \cite{Pen}, in Section \ref{sec:existence} we state conditions which are sufficient (though not necessary) for the existence and definition of the processes in terms of their generators. 
We highlight that, for the construction of the coupling through the definition of the rates (see Section \ref{tnfp}), we only need  
\begin{equation*}\label{sumatotaltasas}\sum_{a\in C^{x}}(c_a^1(\eta)+c_a^2(\xi))<\infty,\quad \fora x\in S,\eta,\xi\in\Omega,
\end{equation*}
because this guarantees that the sums of the upper bounds in the flow problems of Section \ref{tnfp} are finite. For instance, when the set $W$ is finite, we can allow migrations between a site $x$ and and any other $y\in\S$, provided that conditions, such as (3.3) and (3.8), in I.3 of \cite{Lig}, are fulfilled.
 Of course, some additional conditions may be required to ensure that the rates define an IPS on $(W\times W)^{\S}$.
\end{rem}
\section{Network flow}\label{tnfp}

Network flow theory has proven valuable in constructing OMC in prior works.
 In \cite{LS}, where only a site can change at each transition, a network flow problem is defined for every $x\in \S$. Similarly, in \cite{DLS04}, where two sites can change together, a problem is also defined for each site but a condition on the rates (condition (2) in that paper) ensures that problems for different sites are disjoint. The situation is more intricate here because, while we still write a problem for each site, problems for different sites are not disjoint. Consequently, the construction of the coupling cannot be done directly by merging the results of  individual problems, as in the previously mentioned papers. Instead, additional problems for pairs of sites must be solved. The coupling is constructed  from the solutions to  flow problems for individual and paired sites.
 The reader interested in the mathematical theory and algorithms of network flow can refer to \cite{Ahu}.

The components of a network flow problem $P$, as posed here, are a set of nodes $\mathcal{N}$, a set of arcs  $\mathcal{A}\subseteq\mathcal{N}\times\mathcal{N}$ and two functions $l,u:\mathcal{A}\to\RR_+\cup\{+\infty \}$, representing the lower and upper bounds of capacities of arcs. Also, for convenience, we distinguish two nodes $O,Z\in\mathcal{N}$, called origin and destination, respectively. 

A solution $f$ to $P$  (also called flow on $ \mathcal{A}$) is a function  $f:\mathcal{A}\to\RR_+$ satisfying
\begin{enumerate}
	\item $l(v,w)\le f(v,w)\le u(v,w)$,  $\fora (v,w)\in \mathcal{A}$, 
	\item $\sum\limits_{w:(O,w)\in\mathcal{A}}\!\!f(O,w)=\sum\limits_{v:(v,Z)\in\mathcal{A}}\!\!f(v,Z)$ and 
	\item $\sum\limits_{v:(v,t)\in\mathcal{A}}\!\!f(v,t)=\sum\limits_{w:(t,w)\in\mathcal{A}}\!\!f(t,w)$, $\fora t\in\mathcal{N}\setminus\{O,Z\}$.
\end{enumerate}
 If  $P$ has a solution $f$ then $P$ is said to be feasible and $f(v,w)$ represents the flow on arc $(v,w)$. Further, for convenience, a flow $f$ is extended to a function on $\mathcal{N}\times\mathcal{N}$ by letting $f(v,w)=0$ for all $(v,w)\not\in\mathcal{A}$ and we assume that this is the case for all flow problems considered below. Finally, for $V,W\subseteq\mathcal{N}$ we define $f(V,W):=\sum_{v\in V,w\in W}f(v,w)$.

In all the flow problems in the following sections, the bound functions $l,u$ are shown in  a table, such as Table \ref{tab:P1xbounds}, and the set $\mathcal{A}$ of arcs is defined implicitly from such table. That is $(o,d)\in\mathcal{A}$ if and only if $(o,d)$ is listed in the table.

\subsection{The flow problem $P^x$}
Hereafter we fix $\eta,\xi\in \Omega$,  such that $\eta\le\xi$, and suppose that \eqref{eq:c1}, \eqref{eq:c2} hold  $\fora x\in \S$. Under such conditions, for each $x\in \S$, we define a flow problem  $P^x$ with nodes $\mathcal{N}^x$ and arcs $\mathcal{A}^x$. Let
\begin{equation*}
\begin{split}
S_1^{x}&=\{a\in C^{x}:\eta_a(x)<\eta(x)\},\quad T_1^{x}=S_1^{x}\cup R_1^{x},\\
S_2^{x}&=\{b\in C^{x}:\xi_b(x)>\xi(x)\},\quad T_2^{x}=S_2^{x}\cup R_2^{x}.
\end{split}
\end{equation*}

There is a node in $\mathcal{N}^x$ for each element of $T_1^x$; a node for each element  of $T_2^x$ and the two ``artificial'' nodes $O$ and $Z$. Since $T_1^x$ and $T_2^x$ need not be disjoint, we attach a label ``1'' to the elements of $T_1^x$ and a label ``2'' to the elements of $T_2^x$, in order to force a distinction. Therefore, an element of $T_1^x \cap T_2^x$ corresponds to two different nodes in $\mathcal{N}^x$: one with label ``1'' and another with label ``2''. More formally, the set of nodes may be defined as $\mathcal{N}^x=\{O,Z\}\cup(T_1^x\times\{1\})\cup (T_2^x\times\{2\})$.
Nonetheless, to avoid overburdening the notation, we refer to nodes $(a,1), (b,2)$ simply by $a,b$, without explicit mention to their labels, because they are clear from the context. For instance, when we allude to node $a\in T_1^x$ (resp. $b\in T_2^x$), we mean $(a,1)$ (resp. $(b,2)$). In this vein, we define the set of nodes of $P^x$ as 
\begin{equation}
\label{eq:nodes}
\mathcal{N}^{x}=\{O,Z\}\cup T_1^{x}\cup T_2^{x}.
\end{equation}
The above convention concerning nodes applies as well to the remaining flow problems $P^{xy}, P_1^{xy+}$, etc. in this section.

\label{pr:problem1}
\begin{defn}
	\label{def:P1x}
Let problem $P^x$ with nodes $\mathcal{N}^x$ defined in \eqref{eq:nodes} 
	and bounds (arcs $\mathcal{A}^x$) given in Table \ref{tab:P1xbounds}.
\end{defn}
\begin{table}[!h]
	\begin{center}
		{\caption{\label{tab:P1xbounds}Lower ($l$) and upper ($u$) bounds for arcs $(o,d)$ in problem $P^x$.}}
		{\scalebox{1}{
				\begin{tabular}{ccccc}
					\hline
					$o$& & $d$     & $l(o,d)$ & $u(o,d)$  \\
					\hline
					$O$& & $b\in S_2^{x}$     & 0  & $c_b^2(\xi)$   \\
					$O$& & $b\in R_2^{x}$     & $c_b^2(\xi)$  & $c_b^2(\xi)$   \\
					$b\in T_2^{x}$& &$a\in T_1^{x}$ and  $\eta_a\le\xi_b$&0&$\infty$\\
					$a \in R_1^{x}$& & $Z$     & $c_a^1(\eta)$  & $c_a^1(\eta)$   \\
					$a\in S_1^{x}$& &$ Z$     & 0  & $c_a^1(\eta)$   \\
					\hline
		\end{tabular}}}
	\end{center}
\end{table}

\begin{rem}
	\label{rem:condf}
 From Table \ref{tab:P1xbounds} we find  $\mathcal{A}^x\subseteq(\{O\}\times T_2^x)\cup (T_2^x\times T_1^x)\cup (T_1^x\times\{Z\})$. Note also that every solution $f^x$ satisfies  $f^x(O,b)=f^x(b, T_1^x), \fora b\in T_2^x$ and $f^x(a,Z)=f^x(T_2^x,a), \fora a\in T_1^x$.
	 Further, from Definition \ref{def:P1x} we have
\begin{equation}
\label{eq:problem1x}
\begin{split}
f^x(T_2^{x},a)=&c_a^1(\eta), \fora a\in R_1^{x},\quad f^x(T_2^{x},a)\le c_a^1(\eta),\fora a\in S_1^{x},\\
f^x(b,T_1^{x})=&c_b^2(\xi),\fora b\in R_2^{x},\quad f^x(b,T_1^{x})\le c_b^2(\xi),\fora b\in S_2^{x}.
\end{split}
\end{equation}
 Moreover, any function $\phi:T_2^x\times T_1^x\to\RR_+$ satisfying \eqref{eq:problem1x}, such that $\phi(b,a)=0$ if $\eta_a\not\le\xi_b$, can be  extended to a solution of $P^x$ by setting $\phi(O,b)=\phi(b,T_1^x)$,  $\fora b\in T_2^x$, and $\phi(a,Z)=\phi(T_2^x,a)$,  $\fora a\in T_1^x$.
\end{rem}
\begin{rem} For the construction of the coupling we need the values of $f^x(b,a)$, $\fora b\in T_2^x, a\in T_1^x$, with $\eta_a\le\xi_b$, such that \eqref{eq:problem1x} holds. We could obtain these quantities as a solution to a network flow problem, with supply nodes $T_2^x$ and demand nodes $T_1^x$, having lower and upper bounds on the nodes. The addition of the two artificial nodes $O, Z$ allows us to translate this problem, with bounds on the nodes, into a problem with bounds on the arcs, which is the standard formulation in the literature (see, e.g., \cite{FF}).
\end{rem}
\begin{prop}
	\label{propositionf1}
	$P^x$ is feasible.
\end{prop}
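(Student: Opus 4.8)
The plan is to read $P^x$ as a transportation problem with two-sided capacity constraints and to settle its feasibility via the classical feasibility theorem for network flows with prescribed lower and upper bounds (see, e.g., \cite{Ahu}). By the last part of Remark \ref{rem:condf}, it suffices to produce some $\phi\colon T_2^x\times T_1^x\to\RR_+$ vanishing off compatible pairs and satisfying \eqref{eq:problem1x}; such a $\phi$ is exactly the ``core'' of a solution of $P^x$, the arcs incident to $O$ and $Z$ merely recording the marginals $f^x(O,b)=f^x(b,T_1^x)$ and $f^x(a,Z)=f^x(T_2^x,a)$. The only arcs carrying a positive lower bound are $(O,b)$ for $b\in R_2^x$ (with $l=u=c_b^2(\xi)$) and $(a,Z)$ for $a\in R_1^x$ (with $l=u=c_a^1(\eta)$), so the forced supply sits at $R_2^x$ and the forced demand at $R_1^x$, while the middle arcs $(b,a)$ have infinite capacity.

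To apply the feasibility theorem I would first turn the $O$–$Z$ flow into a circulation by adjoining a return arc $(Z,O)$ with bounds $[0,\infty]$; balance condition (2) then becomes conservation at $O$ and $Z$. By Hoffman's circulation criterion a feasible circulation exists iff for every node set $X\subseteq\mathcal{N}^x$ the total lower bound on arcs entering $X$ does not exceed the total upper bound on arcs leaving $X$. The key simplification is that a cut can only be binding when no infinite-capacity arc leaves $X$: this forces $Z\in X\Rightarrow O\in X$, and, writing $B:=X\cap T_2^x$ and $A:=X\cap T_1^x$, it forces every $a\in T_1^x$ with $\eta_a\le\xi_b$ for some $b\in B$ to lie in $A$.

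Under these restrictions only two families of cuts remain nontrivial. When $O,Z\notin X$, the entering lower bound equals $\sum_{b\in B\cap R_2^x}c_b^2(\xi)$ and the leaving upper bound equals $\sum_{a\in A}c_a^1(\eta)$; putting $D_2:=B\cap R_2^x$, the compatibility requirement on $X$ gives $A\supseteq D_2^\downarrow$, so \eqref{eq:c2} yields $\sum_{a\in A}c_a^1(\eta)\ge\sum_{a\in D_2^\downarrow}c_a^1(\eta)\ge\sum_{b\in D_2}c_b^2(\xi)$, the desired inequality. When $O,Z\in X$, the entering lower bound equals $\sum_{a\in R_1^x\setminus A}c_a^1(\eta)$ and the leaving upper bound equals $\sum_{b\in T_2^x\setminus B}c_b^2(\xi)$; setting $D_1:=R_1^x\setminus A$, the same requirement forces $D_1^\uparrow\subseteq T_2^x\setminus B$, and \eqref{eq:c1} gives $\sum_{b\in T_2^x\setminus B}c_b^2(\xi)\ge\sum_{b\in D_1^\uparrow}c_b^2(\xi)\ge\sum_{a\in D_1}c_a^1(\eta)$, again as required. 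The case $O\in X,Z\notin X$ has entering lower bound $0$, while $O\notin X,Z\in X$ is excluded by the return arc; both are trivial. Since \eqref{eq:c1} and \eqref{eq:c2} hold by hypothesis, Hoffman's criterion is verified for all $X$ and $P^x$ is feasible.

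The main obstacle I anticipate is the cut bookkeeping in the previous paragraph: correctly isolating the cuts that avoid the infinite-capacity middle arcs, and then translating the resulting structure of $X$ into the partner sets $D_2^\downarrow$ and $D_1^\uparrow$ so that the two nontrivial cut inequalities coincide exactly with \eqref{eq:c2} and \eqref{eq:c1}. One must also check that admitting optional ($S_1^x,S_2^x$) nodes into $X$ never produces a tighter constraint than the extremal choices $B\cap R_2^x$ and $R_1^x\setminus A$ above (it only enlarges the leaving capacity), and note that the label duplication of nodes in $T_1^x\cap T_2^x$ is immaterial here, since the cut analysis only sees the arcs listed in Table \ref{tab:P1xbounds}.
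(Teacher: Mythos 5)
Your proposal is correct and takes essentially the same route as the paper: the partition theorem of \cite{Neu1984} that the paper invokes is exactly Hoffman's circulation criterion with the return arc $(Z,O)$ built in, and the paper likewise reduces feasibility to the two nontrivial cuts avoiding the infinite-capacity middle arcs, whose inequalities are precisely \eqref{eq:c2} (for $O,Z\notin X$) and \eqref{eq:c1} (for $O,Z\in X$). The only step you leave compressed, and which the paper spells out, is that $A\supseteq D_2^\downarrow$ (and $D_1^\uparrow\subseteq T_2^x\setminus B$) presupposes $D_2^\downarrow\subseteq T_1^x$ (respectively $D_1^\uparrow\subseteq T_2^x$), which holds because any $a\in D_2^\downarrow$ has a witness $b\in R_2^x$ with $\eta_a(x)\le\xi_b(x)<\eta(x)$, so $a\in S_1^x\subseteq T_1^x$, and symmetrically for $D_1^\uparrow$.
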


\begin{proof} Let $\{X,\overline X\}$ be a partition of $\mathcal{N}^{ x}$ such that either $O,Z\in X$ or $O,Z\in\overline X$. Let also 
$l(\overline X, X)$ be the sum of lower bounds of arcs $(\overline{x},x)$, with $\overline{x}\in \overline X$, $x\in X$, and let $u(X,\overline X)$ be the sum of upper bounds of arcs $(x,\overline{x})$, with $x\in X$, $\overline{x}\in\overline X$. Then, following the theorem in page 157 of \cite{Neu1984}, there is a flow from $O$ to $Z$ 	if and only if $l(\overline X,X)\le u(X,\overline X)$, for any partition as above.

	Take $\{X,\overline X\}$ such that $O,Z\in\overline X$. Then
	\begin{equation*}l(\overline X,X)=\sum_{b\in R_2^{x}\cap X}c_b^2(\xi).\end{equation*}
	For $u(X,\overline X)$ note that, if $b\in X$ and $a\in \overline X, a\ne Z$, such that $\eta_a\le \xi_b$, we have $u(X,\overline X)=+\infty$ and the condition $l(\overline X,X)\le u(X,\overline X)$ is trivially satisfied. We rule out this possibility and so, from here on, if $\eta_a\le \xi_b$, with $b\in X$, we suppose that $a\in X$. Thus,
	\begin{equation*}
	u(X,\overline X)=\sum_{a\in T_1^{ x}\cap X}c_a^1(\eta).
	\end{equation*}
	Finally observe that, letting $D_2=R_2^{x}\cap X$, we have 
	\begin{equation*}
	\sum_{b\in R_2^{x}\cap X}c_b^2(\xi)\le\sum_{a\in D_2^\downarrow}c_a^1(\eta)\le\sum_{a\in T_1^{x}\cap X}c_a^1(\eta),
	\end{equation*}
	where the first inequality follows from \eqref{eq:c2}  and the last  from the fact that $a\in D_2^\downarrow$ implies the existence of $b\in D_2\subseteq R_2^{x}$, such that $\eta_a\le\xi_b$. Therefore, since $\xi_b(x)<\eta(x)$, we have $\eta_a(x)<\eta(x)$ and so $a\in \S_1^{x}\subseteq T_1^{ x}$; also, $a\in X$ by the comments above.
	Finally, the case $O,Z\in X$ is proven analogously from \eqref{eq:c1}.
\end{proof}

We define sets of changes which are useful to describe properties of flows $f^x$. Comments in Remark \ref{rem:changes} are relevant here too.

\begin{defn}
	\label{def:partition}
	For $x, y\in \S$, such that $y\sim x$, let
\begin{equation*}
\begin{split}
&C_1^{+x}=\{a\in C^x:\eta_a(x)>\eta(x),\eta_a(z)=\eta(z),\,\fora z\ne x\},\\ 
&C_1^{-x}=\{a\in C^x:\eta_a(x)<\eta(x),\eta_a(z)=\eta(z),\,\fora z\ne x\},\\
&C_1^{+xy}=\{a\in C^x:\eta_a(x)>\eta(x), \eta_a(y)<\eta(y)\},\\
&C_1^{-xy}=\{a\in C^x:\eta_a(x)<\eta(x),\;\eta_a(y)>\eta(y)\},\\
&C_1^{+x\bullet}=\bigcup\limits_{z\sim x}C_1^{+xz}, \;C_1^{-x\bullet}=\bigcup\limits_{z\sim x}C_1^{-xz},\\
\end{split}
\end{equation*}
\begin{equation*}
\begin{split}
&C_2^{+x}=\{b\in C^x:\xi_b(x)>\xi(x),\xi_b(z)=\xi(z),\,\fora z\ne x\},\\
& C_2^{-x}=\{b\in C^x:\xi_b(x)<\xi(x),\xi_b(z)=\xi(z),\,\fora z\ne x\},\\
&C_2^{+xy}=\{b\in C^x:\xi_b(x)>\xi(x), \xi_b(y)<\xi(y)\},\\
&C_2^{-xy}=\{b\in C^x:\xi_b(x)<\xi(x),\;\xi_b(y)>\xi(y)\},\\
&C_2^{+x\bullet}=\bigcup\limits_{z\sim  x}C_2^{+xz}, \; C_2^{-x\bullet}=\bigcup\limits_{z\sim x}C_2^{-xz}.
\end{split}
\end{equation*}
\end{defn}
In the notation above, the exponent $e$ of $C_i^e$, with $e\in E:=\{\pm x, \pm xy, \pm x\bullet\}$,  informs about the changes in a set. For example, $C_1^{+x}$ has all changes that increase $\eta(x)$, leaving the remaining sites unchanged; $C_2^ {-xy}$ contains all changes that decrease $\xi(x)$ and increase $\xi(y)$ simultaneously;  $C_1^{-x\bullet}$ has all changes such that $\eta(x)$ decreases and $\eta(z)$ increases simultaneously, for some $ z\sim x$. 

The generator ${\cal L}_1$ of $(\eta_t)$ can now be written as
\begin{equation}
\label{eq:generatoreta2}
{\cal L}_1h(\eta)=\sum_{x\in \S}\left(\sum_{a\in C_1^{+x}}
c_a^1(\eta)(h(\eta_a)-h(\eta))+\!\sum_{a\in C_1^{-x}\cup C_1^{-x\bullet}}c_a^1(\eta)(h(\eta_a)-h(\eta))\right).
\end{equation}
Next we classify changes according to the possibility of altering the initial order $\eta\le\xi$.

\begin{defn}\label{def:goodbad}
	For any $e\in E$, let 
	\begin{equation*}
	\begin{split}
		G_1^e&=\{a\in C_1^e:\eta_a\le\xi\},\quad B_1^e=\{a\in C_1^e:\eta_a\not\le\xi\},\\
		G_2^e&=\{b\in C_2^e:\eta\le\xi_b\},\quad\, B_2^e=\{b\in C_2^e:\eta\not\le\xi_b\}.
	\end{split}
\end{equation*}
\end{defn}
	The sets $C_i^e$ can be written as union of  disjoint sets $G_i^e$ and $B_i^e$, for $i=1,2$. Note that
	\begin{equation}\label{goodbad}
R_1^{x}=B_1^{+x}\cup B_1^{+x\bullet},\, S_1^{x}=G_1^{-x}\cup G_1^{-x\bullet}\cup B_1^{-x\bullet},\quad R_2^{x}=B_2^{-x}\cup B_2^{-x\bullet},\, S_2^{x}=G_2^{+x}\cup G_2^{+x\bullet}\cup B_2^{+x\bullet}.
	\end{equation}
	We present some useful properties of  flows $f^x$.

\begin{lem}
	\label{lem:f1xtilde}  Let $f^x$ be a solution to $P^x$, then  
\begin{enumerate}
	\item[(a)] 	$f^x(R_2^{x}\cup B_2^{+x\bullet},B_1^{+x})=0$,
	\item[(b)] $f^x(B_2^{-x}, R_1^{x}\cup B_1^{-x\bullet})=0$,
	\item[(c)] $f^x(B_2^{-x\bullet},B_1^{+x\bullet})=0$,
	\item[(d)] 	\label{lem:f1x03} if $y,z\in \S$  \textup{s.t.}  $y\sim  x, z\sim  x$ and $z\ne y$, then $f^x(R_2^{x}\cup B_2^{+xz},B_1^{+xy})=0$,
	\item[(e)] \label{lem:f1x04} if $y,z\in \S$  \textup{s.t.}  $y\sim  x, z\sim  x$ and $z\ne y$, then 	$f^x(B_2^{-xy},R_1^{x}\cup B_1^{-xz})=0$.
\end{enumerate}
Moreover, there exists a solution $\tilde{f}^x$ such that
\begin{enumerate}
\item[(f)]
$\tilde{f}^x(S_2^{x}, S_1^{x})=0$.
\end{enumerate}
\end{lem}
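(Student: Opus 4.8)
The plan is to treat (a)--(e) by a single mechanism and then establish (f) by a rerouting argument. For (a)--(e), the key is that by Definition \ref{def:P1x} a solution $f^x$ charges only arcs of $\mathcal{A}^x$, so $f^x(b,a)>0$ forces $(b,a)\in\mathcal{A}^x$ and hence $\eta_a\le\xi_b$. Thus, for $V\subseteq T_2^x$ and $W\subseteq T_1^x$, it suffices to verify that $\eta_a\not\le\xi_b$ for every $b\in V$ and $a\in W$, whence $f^x(V,W)=0$. I would first record the driving facts: $a\in R_1^x$ gives $\eta_a(x)>\xi(x)\ge\eta(x)$, and $b\in R_2^x$ gives $\xi_b(x)<\eta(x)\le\xi(x)$. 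For the two-site (``bullet'') sets I would pin down the coordinate at which the order fails: a change $a\in B_1^{+x\bullet}$ lies in some $C_1^{+xy}$ and lowers $\eta$ at $y$, so $\eta_a(y)\le\xi(y)$ and the failure $\eta_a\not\le\xi$ can only occur at $x$; symmetrically $a\in B_1^{-x\bullet}$ raises $\eta$ at its partner site $y$ and fails there; dually for $b\in B_2^{+x\bullet}$ (fails at the lowered partner) and $b\in B_2^{-x\bullet}$ (fails at $x$).

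With these facts the five statements fall into two patterns. When the source lies in $R_2^x$ and the target in $R_1^x$ --- this covers (c) together with the $R_2^x$-source alternative in (a),(d) and the $R_1^x$-target alternative in (b),(e) --- the inequalities above chain to $\eta_a(x)>\xi(x)\ge\eta(x)>\xi_b(x)$, a violation at $x$. When instead the source is bad at a neighbouring site that the target does not touch --- the $B_2^{+x\bullet}$-source part of (a), the $B_1^{-x\bullet}$-target part of (b), and the $z\ne y$ parts of (d) and (e) --- the violating coordinate $z$ (or $y$) is left unchanged by the partner change, which therefore inherits the strict inequality there. In every case $\eta_a\not\le\xi_b$, so the relevant $f^x(V,W)$ vanishes. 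I would write one representative of each pattern and note that the rest follow after relabelling.

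For (f) I would construct a solution rather than argue about all of them. Starting from a solution $f^x$, which exists by Proposition \ref{propositionf1}, define $\phi:T_2^x\times T_1^x\to\RR_+$ by $\phi(b,a)=f^x(b,a)$ unless $b\in S_2^x$ and $a\in S_1^x$, in which case $\phi(b,a)=0$. Because $R_1^x\cap S_1^x=\emptyset$ and $R_2^x\cap S_2^x=\emptyset$, zeroing the $S_2^x\to S_1^x$ entries removes nothing from any $a\in R_1^x$ or $b\in R_2^x$; hence $\phi$ retains the equalities of \eqref{eq:problem1x} on $R_1^x$ and $R_2^x$, while on $S_1^x$ and $S_2^x$ the relevant sums only decrease, so the inequalities of \eqref{eq:problem1x} persist. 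Also $\phi(b,a)=0$ whenever $\eta_a\not\le\xi_b$, inherited from $f^x$. By the final assertion of Remark \ref{rem:condf}, $\phi$ extends to a solution $\tilde f^x$ of $P^x$, and by construction $\tilde f^x(S_2^x,S_1^x)=0$.

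The bookkeeping in (a)--(e) is routine; the point needing genuine care is (f), where the rerouting must not disturb the saturation constraints $f^x(T_2^x,a)=c_a^1(\eta)$ on $R_1^x$ and $f^x(b,T_1^x)=c_b^2(\xi)$ on $R_2^x$. I expect this to be the main obstacle, and it is exactly the disjointness of $R_1^x$ from $S_1^x$ and of $R_2^x$ from $S_2^x$ that protects those constraints, since only arcs incident to $S_1^x$ or $S_2^x$ (all with lower bound zero) are modified. Invoking Remark \ref{rem:condf} then avoids re-checking flow conservation at $O$ and $Z$ by hand.
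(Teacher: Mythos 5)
Your proposal is correct and follows essentially the same route as the paper: parts (a)--(e) are proved by checking coordinate-wise that $\eta_a\not\le\xi_b$ for every relevant pair (your two ``patterns'' are just a grouping of the paper's five case-by-case verifications), and part (f) uses exactly the paper's construction — zero out the $S_2^x\times S_1^x$ entries, note that the saturation equalities on $R_1^x$ and $R_2^x$ are untouched while the inequalities on $S_1^x$, $S_2^x$ only improve, and invoke Remark \ref{rem:condf} to extend to a full solution.
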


\begin{proof}

(a)	Let $a\in B_1^{+x}$. If $b\in R_2^{x}$ then  $\eta_a(x)>\xi(x)\ge\eta(x)>\xi_b(x)$, so $\eta_a\not\le\xi_b$ and $f^x(b,a)=0$. If $b\in B_2^{+xy}\subseteq B_2^{+x\bullet}$, then $\xi_b(y)<\eta(y)=\eta_a(y)$. So, again, $\eta_a\not\le\xi_b$ and $f^x(b,a)=0$.

(b) 	Let $b\in B_2^{-x}$. If $a\in R_1^{x}$ then $\eta_a(x)>\xi(x)\ge\eta(x)>\xi_b(x)$, so $\eta_a\not\le\xi_b$ and $f^x(b,a)=0$. If $a\in B_1^{-xy}\subseteq B_1^{-x\bullet}$, then $\eta_a(y)>\xi(y)=\xi_b(y)$; so, $\eta_a\not\le\xi_b$ and $f^x(b,a)=0$.

(c) Let $a\in B_1^{+x\bullet}, b\in B_2^{-x\bullet}$, then $\eta_a(x)>\xi(x)$ and $\xi_b(x)<\eta(x)$. So $\eta_a\not\le\xi_b$ and $f^x(b,a)=0$.

 (d) 	Let $a\in B_1^{+xy}$. If $b\in R_2^{x}$  then $\eta_a(x)>\xi(x)$ and $\xi_b(x)<\xi(x)$, so $\eta_a\not\le\xi_b$ and $f^x(b,a)=0$. If $b\in B_2^{+xz}$ then $\xi_b(z)<\eta(z)=\eta_a(z)$, so $\eta_a\not\le\xi_b$ and $f^x(b,a)=0$.
 
 (e) 	Let $b\in B_2^{-xy}$. If $a\in R_1^{x}$  then $\eta_a(x)>\xi(x)$ and $\xi_b(x)<\xi(x)$, so $\eta_a\not\le\xi_b$ and $f^x(b,a)=0$. If $a\in B_1^{-xz}$, then $\eta_a(z)>\xi(z)=\xi_b(z)$, so $\eta_a\not\le\xi_b$ and $f^x(b,a)=0$.

 (f)  Let $f^x$ be a solution to $P^x$ and define $\tilde{f}^x:T_2^x\times T_1^x\to\RR^+$ as 
\begin{equation*}
\tilde{f}^x(b,a)=
\begin{cases} 
0\quad\qquad\text{if } a\in S_1^{x}, b\in S_2^{x},\\
f^x(b,a) \qquad\text{otherwise}.
\end{cases}
\end{equation*}
Let us see that  $\tilde{f}^x$ satisfies \eqref{eq:problem1x}. Indeed, by definition  $\tilde{f}^x$ satisfies the first and third conditions in \eqref{eq:problem1x} and also $\tilde{f}^x(T_2^{ x},a)=\tilde{f}^x(R_2^{ x},a)+\tilde{f}^x(S_2^{x},a)=f^x(R_2^{x},a)\le c_a^1(\eta)$, for all $a\in S_1^{x}$, which yields the second condition. The fourth is similarly obtained. By Remark \ref{rem:condf}, the conclusion follows.
\end{proof}

Hereafter we work with flows $f^x$ satisfying (f) in Lemma \ref{lem:f1xtilde}. We list some properties of such solutions in Table \ref{tab:tabla1x}, where entries show (necessary) conditions for arcs $(b,a)$ to have a positive flow. Note that the first two rows (columns) correspond to $R_2^{x}$ ($R_1^{x}$) while the remaining three correspond to $S_2^{x}$ ($S_1^{x}$). 
 \begin{table}[!h]
	\begin{center}
				{\caption{\label{tab:tabla1x} Necessary conditions for positive flow $f^x(b,a)$, with $b\in T_2^x, a\in T_1^x$. Symbol $-$ indicates 0 flow.}}
		{\scalebox{1}{
				\begin{tabular}{c|ccccc}
					\hline
					$T_2^x\backslash T_1^x$&$B_1^{+x}$&$B_1^{+x\bullet}$&$B_1^{-x\bullet}$&$G_1^{-x}$&$G_1^{-x\bullet}$\\ \hline
					$B_2^{-x}$&$-$&$-$&$-$&$\eta_a\le\xi_b$&$\eta_a\le\xi_b$\\ \hline
					$B_2^{-x\bullet}$&$-$&$-$&$\eta_a\le\xi_b$&$\eta_a\le\xi_b$&$\eta_a\le\xi_b;$\\ 
					&&&$\exists  y\sim x, a\in B_1^{-xy},b\in B_2^{-xy}$&&$ $\\ \hline
					$B_2^{+x\bullet}$&$-$&$\eta_a\le\xi_b$&$-$&$-$&$-$\\ 
					&&$\exists  y\sim x, a\in B_1^{+xy},b\in B_2^{+xy}$&&&$ $ \\ \hline
					$G_2^{+x}$&$\eta_a\le\xi_b$&$\eta_a\le\xi_b$&$-$&$-$&$-$\\ \hline
					$G_2^{+x\bullet}$&$\eta_a\le\xi_b$&$\eta_a\le\xi_b$&$-$&$-$&$-$\\
					\hline
		\end{tabular}}}
	\end{center}
\end{table}%

\subsection{The flow problem $P^{xy}$}\label{secprob2}
 We define flow problems for all pairs $(x,y)\in \mathcal{S}^2$, with $x\sim y$, such that $P^x, P^y$ have   respective solutions $f^x,f^y$,  assumed to satisfy  (f) of Lemma \ref{lem:f1xtilde}.
\begin{defn}
	\label{def:Txy} Let problem $P^{xy}$ have nodes $\mathcal{N}^{ xy}=\{O,Z\}\cup T_1^{xy}\cup T_2^{xy}$
and bounds shown in Table \ref{tab:P2xybounds},
 where \begin{equation*}
 	T_1^{xy} =G_1^{-x}\cup G_1^{-x\bullet}\cup G_1^{-y}\cup G_1^{-y\bullet}\cup B_1^{-xy}\cup B_1^{+xy}, \qquad T_2^{xy}= G_2^{+x}\cup G_2^{+x\bullet}\cup G_2^{+y}\cup G_2^{+y\bullet}\cup B_2^{-xy}\cup B_2^{+xy}.
 \end{equation*} 
 
\end{defn}
\begin{table}[!h]
	\begin{center}
			{\caption{\label{tab:P2xybounds}Lower ($l$) and upper ($u$) bounds for arcs $(o,d)$ in problem $P^{xy}$.}}
		{\scalebox{1}{
				\begin{tabular}{ccccc}
					\hline
					$o$& & $d$     & $l(o,d)$ & $u(o,d)$  \\
					\hline
					$O$& & $b\in G_2^{+x}\cup G_2^{+x\bullet}$     & 0  & $f^x(b,B_1^{+xy})$   \\
					$O$& & $b\in G_2^{+y}\cup G_2^{+y\bullet}$     & 0  & $f^y(b,B_1^{-xy})$   \\
					$O$& & $b\in B_2^{-xy}\cup B_2^{+xy}$     & $c_b^2(\xi)$  & $c_b^2(\xi)$   \\
					$b\in T_2^{xy}$& &$a\in T_1^{xy}$ and $\eta_a\le\xi_b$&0&$\infty$\\
					$a\in B_1^{+xy}\cup B_1^{-xy}$& & $Z$     & $c_a^1(\eta)$  & $c_a^1(\eta)$   \\
					$a\in G_1^{-x}\cup G_1^{-x\bullet}$& &$ Z$     & 0  & $f^x(B_2^{-xy},a)$   \\
					$a\in G_1^{-y}\cup G_1^{-y\bullet}$& &$ Z$     & 0  & $f^y(B_2^{+xy},a)$   \\
					\hline
		\end{tabular}}}
	\end{center}
\end{table}%
\begin{rem}
	Note that $P^{xy}=P^{yx}$. Observe also that there are changes belonging to several $T_1^{xy}$ (or $T_2^{xy}$). For instance, $a\in G_1^{-x}$ is in every $T_1^{xy}$ with $y\sim x$. Also, if $a\in G_1^{-xy}$ then, by definition,   $a\in T_1^{xz}$, $\fora z\sim  x$, but $a\not\in T_1^{yz}$,  for any $z\ne x$. Further,  if $a\in B_1^{+xy}$ then $a\in T_1^{xy}$ but $a\not\in T_1^{xz}$, for any $z\ne y$, and $a\not \in T_1^{yz}$, for any $z\ne x$. 
\end{rem}
The rest of this section is devoted to proving that $P^{xy}$ is feasible. To that end we consider auxiliary problems $P_1^{xy+}, P_1^{xy-}, P_2^{xy+}$ and $P_2^{xy-}$. 
\subsubsection{Problems $P_1^{xy+}$ and  $P_1^{xy-}$}\ 

\begin{defn}
\label{def:P1xy}

\noindent(a) Let  $P_1^{xy+}$ have nodes $\mathcal{N}_1^{ xy+}=\{O,Z\}\cup T_1^{xy+}\cup T_2^{xy+}$ and bounds  in the left panel of Table \ref{tab:P1xy+bounds}, where $T_1^{xy+}=G_1^{-x}\cup G_1^{-x\bullet}\cup B_1^{-xy}$ and $T_2^{xy+}=B_2^{-xy}$.\\

\noindent(b) Let  $P_1^{xy-}$ have nodes $\mathcal{N}_1^{ xy-}=\{O,Z\}\cup T_1^{xy-}\cup T_2^{xy-}$ and bounds in the right panel of Table \ref{tab:P1xy+bounds}, where $T_1^{xy-}=B_1^{+xy}$ and $T_2^{xy-}=G_2^{+x}\cup G_2^{+x\bullet}\cup B_2^{+xy}$.
\end{defn}

\begin{table}[!h]
	\begin{center}
				{\caption{\label{tab:P1xy+bounds}Lower ($l$) and upper ($u$) bounds for arcs $(o,d)$ in problems $P_1^{xy+}$ (left), $P_1^{xy-}$ (right).}} 
		{\scalebox{0.9}{
				\begin{tabular}{ccccc|}
					\hline
					$o$& & $d$     & $l(o,d)$ & $u(o,d)$  \\
					\hline
					$O$& & $b\in T_2^{xy+}$     & $c_b^2(\xi)$  & $c_b^2(\xi)$   \\
					$b\in T_2^{xy+}$& &
					$a\in T_1^{xy+}$ and $\eta_a\le\xi_b$ 
					&0&$\infty$\\
					$a\in G_1^{-x}\cup G_1^{-x\bullet}$& & $Z$     & 0  & $f^x(B_2^{-xy},a)$   \\
					$a\in B_1^{-xy}$& &$ Z$     & 0  & $c_a^1(\eta)$   \\
					\hline
		\end{tabular}
	\begin{tabular}{|ccccc}
		\hline
		$o$& & $d$     & $l(o,d)$ & $u(o,d)$  \\
		\hline
		$O$& & $b\in B_2^{+xy}$     & 0  & $c_b^2(\xi)$   \\
		$O$& & $b\in G_2^{+x}\cup G_2^{+x\bullet}$     & 0  & $f^x(b,B_1^{+xy})$   \\
		$b\in T_2^{xy-}$& &
		$a\in T_1^{xy-}$ and $\eta_a\le\xi_b$
		&0&$\infty$\\
		$a\in T_1^{xy-}$& & $Z$     & $c_a^1(\eta)$  & $c_a^1(\eta)$   \\
		\hline
\end{tabular}}}
	\end{center}
\end{table}%
\begin{prop}
	\label{prop:P1xy+feas}
	 $P_1^{xy+}$ and $P_1^{xy-}$ are feasible. 
\end{prop}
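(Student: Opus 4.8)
The plan is to establish feasibility of each problem through the same cut criterion used in the proof of Proposition \ref{propositionf1}, i.e.\ the theorem on page~157 of \cite{Neu1984}: a flow from $O$ to $Z$ exists if and only if $l(\overline X,X)\le u(X,\overline X)$ for every partition $\{X,\overline X\}$ of the node set with $O$ and $Z$ on the same side. Exactly as there, if any arc $(b,a)$ with $\eta_a\le\xi_b$ crosses from the side of $X$ to $\overline X$, then $u(X,\overline X)=+\infty$ and the inequality is automatic; hence I may always assume that a partition keeps the endpoints of every positive-flow arc of $f^x$ on the same side. The key inputs beyond this are the flow-balance relations \eqref{eq:problem1x} at the nodes of $R_1^x$ and $R_2^x$, together with the zero-flow pattern of $f^x$ recorded in Lemma \ref{lem:f1xtilde} and Table \ref{tab:tabla1x}.

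For $P_1^{xy+}$ the only arcs carrying a positive lower bound are $O\to b$ with $b\in T_2^{xy+}=B_2^{-xy}$, so the case $O,Z\in X$ is immediate (every arc entering $X$ has lower bound $0$, whence $l(\overline X,X)=0$), and the work is in the case $O,Z\in\overline X$. There I would write $l(\overline X,X)=\sum_{b\in B_2^{-xy}\cap X}c_b^2(\xi)$, and since $B_2^{-xy}\subseteq B_2^{-x\bullet}\subseteq R_2^x$, replace each $c_b^2(\xi)$ by $f^x(b,T_1^x)$ via \eqref{eq:problem1x}. Row $B_2^{-x\bullet}$ of Table \ref{tab:tabla1x} (equivalently Lemma \ref{lem:f1xtilde}(c),(e)) confines this flow to the destinations $B_1^{-xy}\cup G_1^{-x}\cup G_1^{-x\bullet}$, and the partition assumption keeps it inside $X$; re-summing by destination and bounding $f^x(T_2^x,a)\le c_a^1(\eta)$ on $B_1^{-xy}\subseteq S_1^x$ reproduces exactly the upper bounds of the left panel of Table \ref{tab:P1xy+bounds}, giving $l(\overline X,X)\le u(X,\overline X)$.

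The argument for $P_1^{xy-}$ is dual: the positive lower bounds now sit on $a\to Z$ with $a\in T_1^{xy-}=B_1^{+xy}$, so $O,Z\in\overline X$ is trivial and the case $O,Z\in X$ carries the content. Since $B_1^{+xy}\subseteq B_1^{+x\bullet}\subseteq R_1^x$, \eqref{eq:problem1x} gives $c_a^1(\eta)=f^x(T_2^x,a)$, and column $B_1^{+x\bullet}$ of Table \ref{tab:tabla1x} (equivalently Lemma \ref{lem:f1xtilde}(d)) confines the incoming flow to the sources $B_2^{+xy}\cup G_2^{+x}\cup G_2^{+x\bullet}$, which after bounding $f^x(b,T_1^x)\le c_b^2(\xi)$ on $B_2^{+xy}\subseteq S_2^x$ matches the upper bounds of the right panel of Table \ref{tab:P1xy+bounds}. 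I expect the only real difficulty to be bookkeeping: one must keep straight which of the two relations in \eqref{eq:problem1x} applies to each label (equality on the $R$-sets, inequality on the $S$-sets) and invoke the precise zero-flow entries of Table \ref{tab:tabla1x}, so that the decomposition of $c_b^2(\xi)$ (resp.\ $c_a^1(\eta)$) produces only the destination (resp.\ source) sets appearing in the bound table and no stray terms.
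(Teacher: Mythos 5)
Your proof is correct, but it takes a genuinely different route from the paper's. The paper's proof is constructive and much shorter: it simply takes the restriction of the known solution $f^x$ of $P^x$ to the node set $\mathcal{N}_1^{xy+}$, declares $f_1^{xy+}(v,w)=f^x(v,w)$, and checks the three bound conditions of Table \ref{tab:P1xy+bounds} directly — the equality $f_1^{xy+}(b,T_1^{xy+})=c_b^2(\xi)$ on $B_2^{-xy}$ coming from $B_2^{-xy}\subseteq R_2^x$, line 3 of \eqref{eq:problem1x} and row $B_2^{-x\bullet}$ of Table \ref{tab:tabla1x}; the inequality on $B_1^{-xy}$ from $B_1^{-xy}\subseteq S_1^x$ and line 2 of \eqref{eq:problem1x}; and the third bound holding by definition (with $P_1^{xy-}$ handled symmetrically). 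You instead verify the cut criterion of \cite{Neu1984} over all partitions, as in the proofs of Propositions \ref{propositionf1} and \ref{prop:P2xy+feas}; your case analysis, your identification of the binding partitions, and your use of \eqref{eq:problem1x} together with the zero-flow pattern of Lemma \ref{lem:f1xtilde}(a),(c),(d),(e) (rows/columns of Table \ref{tab:tabla1x}) are all sound, so the argument goes through. What each approach buys: yours is uniform with the other feasibility proofs in the paper and needs no inspiration beyond the cut theorem; the paper's exploits the observation that $f^x$ itself, restricted to the subnetwork, already \emph{is} a feasible flow — once one sees this, invoking the feasibility theorem is unnecessary machinery, and one additionally gets an explicit solution rather than a bare existence statement. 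Since both arguments consume exactly the same inputs ($f^x$, \eqref{eq:problem1x}, and Lemma \ref{lem:f1xtilde}), the constructive route is strictly more economical here, though nothing in your version is wrong.
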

\begin{proof} Let $f_1^{xy+}(v,w)= f^x(v,w)$, for $v,w\in\mathcal{N}_1^{xy+}$. To prove that $f_1^{xy+}$ solves $P_1^{xy+}$ we check the following three conditions (see Table \ref{tab:P1xy+bounds}):
	\begin{enumerate}
		\item $f_1^{xy+}(b,T_1^{xy+})=c_b^2(\xi)$, $\fora b\in T_2^{xy+}=B_2^{-xy}$, 
		\item $f_1^{xy+}(T_2^{xy+},a)\le c_a^1(\eta)$, $\fora a\in B_1^{-xy}$, 
		\item $f_1^{xy+}(T_2^{xy+},a)\le f^x(B_2^{-xy},a)$,  $\fora a\in G_1^{-x}\cup G_1^{-x\bullet}$. 
	\end{enumerate}
	The first condition follows from $B_2^{-xy}\subseteq R_2^{x}$ together with the third line of  \eqref{eq:problem1x} and the second row of Table \ref{tab:tabla1x}. For the second, the  inclusion $B_2^{-xy}\subseteq T_2^x$ implies
	\begin{equation*}
	f_1^{xy+}(B_2^{-xy},a)= f^x(B_2^{-xy},a)
	\le  f^x(T_2^x,a),\fora a\in B_1^{-xy}.
	\end{equation*}
	Furthermore, by line 2 in \eqref{eq:problem1x} and the inclusion $B_1^{-xy}\subseteq S_1^{x}$, we have $ f^x(T_2^x,a)\le c_a^1(\eta)$. The third condition follows from the definitions, since $f_1^{xy+}(T_2^{xy+},a)= f^x(B_2^{-xy},a)$. Problem $P_1^{xy-}$ can be checked analogously.
\end{proof}

\subsubsection{Problems $P_2^{xy+}$ and $P_2^{xy-}$}\ 
\begin{defn}
\label{def:P2xy+-}	\noindent(a) Let $P_2^{xy+}$ have nodes $\mathcal{N}_2^{xy+}=\{O,Z\}\cup G_1^{-x}\cup G_1^{-x\bullet}\cup B_1^{-xy}\cup G_2^{+y}\cup G_2^{+y\bullet}\cup B_2^{-xy}$
and bounds in Table \ref{tab:P2xybounds}.\\
	
\noindent(b) Let $P_2^{xy-}$ have nodes $\mathcal{N}_2^{xy-}=\{O,Z\}\cup G_1^{-y}\cup  G_1^{-y\bullet}\cup  B_1^{+xy}\cup  G_2^{+x}\cup  G_2^{+x\bullet}\cup  B_2^{+xy}$
and bounds in Table \ref{tab:P2xybounds}.
\end{defn}
\begin{prop}
	\label{prop:P2xy+feas}
		$P_2^{xy+}$ and $P_2^{xy-}$ are feasible. 
\end{prop}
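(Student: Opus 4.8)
The plan is to establish feasibility through the max-flow cut criterion (the theorem on page 157 of \cite{Neu1984}) exactly as in the proof of Proposition \ref{propositionf1}, rather than by exhibiting an explicit solution as was done for $P_1^{xy+},P_1^{xy-}$ in Proposition \ref{prop:P1xy+feas}. The reason is that neither $f^x$ nor $f^y$ alone suffices here: in $P_2^{xy+}$ the source side (the caps $f^y(b,B_1^{-xy})$ and the forced supplies $c_b^2(\xi)$ at $B_2^{-xy}$) is governed by $f^y$, while the sink side (the caps $f^x(B_2^{-xy},a)$ and the forced demands $c_a^1(\eta)$ at $B_1^{-xy}$) is governed by $f^x$. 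I would first note that it is enough to treat $P_2^{xy+}$: comparing node sets, $P_2^{xy-}$ is literally $P_2^{yx+}$ (using $B_1^{+xy}=B_1^{-yx}$ and $B_2^{+xy}=B_2^{-yx}$), so its feasibility follows by interchanging $x$ and $y$.

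Reading Table \ref{tab:P2xybounds} restricted to $\mathcal{N}_2^{xy+}$, the only arcs with positive lower bound are the fixed arcs $O\to b$ for $b\in B_2^{-xy}$ and $a\to Z$ for $a\in B_1^{-xy}$; the remaining finite caps are $u(a,Z)=f^x(B_2^{-xy},a)$ for $a\in G_1^{-x}\cup G_1^{-x\bullet}$ and $u(O,b)=f^y(b,B_1^{-xy})$ for $b\in G_2^{+y}\cup G_2^{+y\bullet}$, while the internal arcs $b\to a$ (present iff $\eta_a\le\xi_b$) are uncapacitated. As in Proposition \ref{propositionf1}, for any partition $\{X,\overline X\}$ I may discard the case in which an uncapacitated arc crosses from $X$ to $\overline X$ (then $u(X,\overline X)=+\infty$), so I always assume the closure property that $b\in X,\ \eta_a\le\xi_b$ forces $a\in X$, equivalently $a\in\overline X,\ \eta_a\le\xi_b$ forces $b\in\overline X$. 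It then remains to check $l(\overline X,X)\le u(X,\overline X)$ in the two cases $O,Z\in\overline X$ and $O,Z\in X$.

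In the case $O,Z\in\overline X$ the cut reduces to $\sum_{b\in B_2^{-xy}\cap X}c_b^2(\xi)\le\sum_{a\in B_1^{-xy}\cap X}c_a^1(\eta)+\sum_{a\in(G_1^{-x}\cup G_1^{-x\bullet})\cap X}f^x(B_2^{-xy},a)$, and I would prove it with $f^x$. Since $B_2^{-xy}\subseteq R_2^{x}$, \eqref{eq:problem1x} gives $c_b^2(\xi)=f^x(b,T_1^x)$, and the row $B_2^{-x\bullet}$ of Table \ref{tab:tabla1x} shows that this outflow lands only in $B_1^{-xy}\cup G_1^{-x}\cup G_1^{-x\bullet}$. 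Splitting $c_b^2(\xi)$ accordingly and using the closure property (so $b\in X$ with $f^x(b,a)>0$ forces $a\in X$) to exchange the order of summation, the $G_1$ part is at most $\sum_{a\in(G_1^{-x}\cup G_1^{-x\bullet})\cap X}f^x(B_2^{-xy},a)$, while the $B_1^{-xy}$ part is at most $\sum_{a\in B_1^{-xy}\cap X}f^x(B_2^{-xy},a)$, which is in turn at most $\sum_{a\in B_1^{-xy}\cap X}c_a^1(\eta)$ because $B_1^{-xy}\subseteq S_1^{x}$ and \eqref{eq:problem1x} yields $f^x(T_2^x,a)\le c_a^1(\eta)$ there.

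In the case $O,Z\in X$ the cut becomes the mirror inequality $\sum_{a\in B_1^{-xy}\cap\overline X}c_a^1(\eta)\le\sum_{b\in B_2^{-xy}\cap\overline X}c_b^2(\xi)+\sum_{b\in(G_2^{+y}\cup G_2^{+y\bullet})\cap\overline X}f^y(b,B_1^{-xy})$, handled by $f^y$. The step I expect to be the main obstacle is the bookkeeping of viewing every set from the site-$y$ perspective: one has $B_1^{-xy}=B_1^{+yx}\subseteq R_1^{y}$ and $B_2^{-xy}=B_2^{+yx}\subseteq S_2^{y}$, so \eqref{eq:problem1x} for $P^y$ gives $c_a^1(\eta)=f^y(T_2^y,a)$, and the column $B_1^{+y\bullet}$ of Table \ref{tab:tabla1x} for $P^y$ shows that $a\in B_1^{+yx}$ is supplied only by $B_2^{-xy}\cup G_2^{+y}\cup G_2^{+y\bullet}$. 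Splitting $c_a^1(\eta)$ and again using closure (now $a\in\overline X,\ f^y(b,a)>0$ forces $b\in\overline X$) to exchange summation order, the $G_2$ part is bounded by $\sum_{b\in(G_2^{+y}\cup G_2^{+y\bullet})\cap\overline X}f^y(b,B_1^{-xy})$ and the $B_2^{-xy}$ part by $\sum_{b\in B_2^{-xy}\cap\overline X}f^y(b,B_1^{-xy})\le\sum_{b\in B_2^{-xy}\cap\overline X}c_b^2(\xi)$, the last inequality from $B_2^{-xy}\subseteq S_2^{y}$ and \eqref{eq:problem1x}. This verifies both cuts, giving the feasibility of $P_2^{xy+}$ and, by the interchange of $x$ and $y$, of $P_2^{xy-}$.
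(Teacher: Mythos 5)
Your proof is correct, and at the top level it follows the same strategy as the paper: feasibility of $P_2^{xy+}$ via the cut criterion of \cite{Neu1984}, with the same two cases ($O,Z\in\overline X$ and $O,Z\in X$), the same dismissal of partitions crossed by an uncapacitated arc, and exactly the same two cut inequalities; $P_2^{xy-}$ is then handled by symmetry. The difference lies in how the cut inequalities are established. The paper observes that each cut of $P_2^{xy+}$ coincides with the corresponding cut of an auxiliary problem --- $P_1^{xy+}$ in the first case and, in the second case, the problem $P_1^{xy-}$ with the roles of $x$ and $y$ interchanged (this is how the paper's reference to $\mathcal{N}_1^{xy-}$ must be read, since the node set of $P_1^{xy-}$ as literally defined meets $\mathcal{N}_2^{xy+}$ only in $\{O,Z\}$) --- and then invokes Proposition \ref{prop:P1xy+feas}, i.e., the necessity half of the cut criterion applied to those feasible problems. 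You instead verify the two inequalities directly, decomposing $c_b^2(\xi)=f^x(b,T_1^x)$ for $b\in B_2^{-xy}\subseteq R_2^{x}$ and $c_a^1(\eta)=f^y(T_2^y,a)$ for $a\in B_1^{-xy}=B_1^{+yx}\subseteq R_1^{y}$ via \eqref{eq:problem1x} and Table \ref{tab:tabla1x}, and using the closure property of the partition to keep every positive-flow term on the correct side. This inlines the content of Proposition \ref{prop:P1xy+feas} (whose proof is precisely that restrictions of $f^x$, $f^y$ solve the auxiliary problems), so your argument never needs $P_1^{xy\pm}$ at all: what you lose in modularity you gain in making explicit where the mass of $f^x$ and $f^y$ actually goes, and your site-$y$ bookkeeping ($B_1^{-xy}=B_1^{+yx}$, $B_2^{-xy}=B_2^{+yx}$) spells out the relabelling that the paper leaves implicit. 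Likewise, your identification $P_2^{xy-}=P_2^{yx+}$ is a clean replacement for the paper's ``proved analogously''.
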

\begin{proof}
	We consider $P_2^{xy+}$ and 
	  apply  the theorem in \cite{Neu1984}. Let $\{X,\overline X\}$ be a partition of $\mathcal{N}_2^{xy+}$ such that $O,Z\in \overline X$. Then
	\begin{alignat*}{1}
	l(\overline X,X)&=\sum_{b\in B_2^{-xy}\cap X}c_b^2(\xi),\\
	u(X,\overline X)&=\sum_{a\in B_1^{-xy}\cap X}c_a^1(\eta)+\sum_{a\in (G_1^{-x}\cup G_1^{-x\bullet})\cap X}f^x(B_2^{-xy},a),
	\end{alignat*}
	assuming there is no arc $(b,a)$ with $a,b\notin \{O,Z\}$, such that $\eta_a\le\xi_b$ and $b\in X$, $a\in \overline X$, because otherwise $u(X,\overline X)=+\infty$ and condition $l(\overline X,X)\le u(X,\overline X)$ is trivial. 
	
	Let $\{X',\overline X'\}$ be the partition of $\mathcal{N}_1^{xy+}$ with $X'=X\cap \mathcal{N}_1^{xy+}$ and $\overline X'=\overline X\cap \mathcal{N}_1^{xy+}$. From the definition of $P_1^{xy+}$ we have
	
	\begin{alignat*}{1}
	l(\overline X',X')&=\sum_{b\in B_2^{-xy}\cap X'}c_b^2(\xi)=\sum_{b\in B_2^{-xy}\cap X}c_b^2(\xi),\\
	u(X',\overline X')&=\sum_{a\in B_1^{-xy}\cap X'}c_a^1(\eta)+\sum_{a\in (G_1^{-x}\cup G_1^{-x\bullet})\cap  X'}f^x(B_2^{-xy},a)\\
	&=\sum_{a\in B_1^{-xy}\cap X}c_a^1(\eta)+\sum_{a\in (G_1^{-x}\cup G_1^{-x\bullet})\cap  X}f^x(B_2^{-xy},a).
	\end{alignat*}
	So, since $P_1^{xy+}$ is feasible, we have $l(\overline X',X')\le u(X',\overline X')$  and so $l(\overline X,X)\le u(X,\overline X)$.
	
	Let now $\{X,\overline X\}$ be a partition of $\mathcal{N}_2^{xy+}$ such that $O,Z\in X$. Then 
	\begin{alignat*}{1}l(\overline X,X)&=\sum_{a\in B_1^{-xy}\cap \overline X}c_a^1(\eta),\\
	u(X,\overline X)&=\sum_{b\in B_2^{-xy}\cap\overline X}c_b^2(\xi)+\sum_{b\in (G_2^{+y}\cup G_2^{+y\bullet})\cap\overline X}f^y(b,B_1^{-xy}).
	\end{alignat*}
	As above, letting $\{X',\overline X'\}$ be the partition of $\mathcal{N}_1^{xy-}$, with  $X'=X\cap \mathcal{N}_1^{xy-}$ and $\overline X'=\overline X\cap \mathcal{N}_1^{xy-}$, it is easily checked that $l(\overline X',X')=l(\overline X,X)$ and $u(X',\overline X')=u(X,\overline X)$, hence  $P_2^{xy+}$ is feasible.	The feasibility of $P_2^{xy-}$ is  proved analogously. 
\end{proof}

\begin{prop} 
	\label{prop:P2xy+-} 
	$P^{xy}$ is feasible and has a solution $f^{xy}$ such that 
	\begin{equation}
	\label{eq:f2xycondition}
	f^{xy}(o,d)=0, \;\fora d\in G_1, o\in G_2,
	\end{equation} 
	with $G_1:= G_1^{-x}\cup G_1^{-x\bullet}\cup G_1^{-y}\cup G_1^{-y\bullet}$ and  $G_2:= G_2^{+x}\cup G_2^{+x\bullet}\cup G_2^{+y}\cup G_2^{+y\bullet}$.
\end{prop}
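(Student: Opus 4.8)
The plan is to assemble $f^{xy}$ from solutions of the two auxiliary problems $P_2^{xy+}$ and $P_2^{xy-}$, and then to purge the flow that runs between good nodes.

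First I would record that, away from $O$ and $Z$, the node sets $\mathcal{N}_2^{xy+}$ and $\mathcal{N}_2^{xy-}$ partition $\mathcal{N}^{xy}$. On the sink side $T_1^{xy}=\bigl(G_1^{-x}\cup G_1^{-x\bullet}\cup B_1^{-xy}\bigr)\cup\bigl(G_1^{-y}\cup G_1^{-y\bullet}\cup B_1^{+xy}\bigr)$ and on the source side $T_2^{xy}=\bigl(G_2^{+y}\cup G_2^{+y\bullet}\cup B_2^{-xy}\bigr)\cup\bigl(G_2^{+x}\cup G_2^{+x\bullet}\cup B_2^{+xy}\bigr)$, the two blocks being precisely the non-trivial nodes of $P_2^{xy+}$ and of $P_2^{xy-}$. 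The blocks are disjoint because their defining sign/site patterns are incompatible: e.g. $G_1^{-x}$-type changes strictly decrease $\eta(x)$ while fixing $\eta(y)$, $G_1^{-y}$-type do the opposite, and $B_1^{-xy},B_1^{+xy}$ move $(x,y)$ in opposite directions (and a single change alters at most the two sites $x,y$). Since $P_2^{xy+}$ and $P_2^{xy-}$ carry the same bounds as $P^{xy}$ (Table \ref{tab:P2xybounds}), each is literally the restriction of $P^{xy}$ to its block of nodes.

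Next, by Proposition \ref{prop:P2xy+feas} both auxiliary problems are feasible; I would pick solutions $g_+,g_-$ and set $f^{xy}:=g_++g_-$, meaning $g_+$ on the arcs of $P_2^{xy+}$, $g_-$ on the arcs of $P_2^{xy-}$, and $0$ on every arc of $P^{xy}$ joining the two blocks. This is a solution of $P^{xy}$: the capacity bounds hold arc by arc because the bounds coincide with those of the subproblems and the joining arcs carry $0$ (with lower bound $0$); conservation holds at each internal node because that node lies in exactly one block and the joining arcs are empty; and the $O$–$Z$ balance is the sum of the two balanced subflows. This already establishes feasibility of $P^{xy}$, and the joining (cross-block) arcs, which are exactly the good-to-good arcs linking the two halves, carry no flow.

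Finally, to obtain \eqref{eq:f2xycondition} I would cancel any surviving flow between good nodes along length-three paths. For each $o\in G_2$, $d\in G_1$ with $f^{xy}(o,d)>0$, subtract $f^{xy}(o,d)$ units along $O\to o\to d\to Z$, i.e. decrease $f^{xy}(O,o)$, $f^{xy}(o,d)$ and $f^{xy}(d,Z)$ by $f^{xy}(o,d)$. This is legitimate because all three arcs have lower bound $0$ (source arcs $O\to G_2$, good-to-good arcs, and sink arcs $G_1\to Z$), while conservation at $o$ and $d$ gives $f^{xy}(O,o)\ge\sum_{d'\in G_1}f^{xy}(o,d')$ and $f^{xy}(d,Z)\ge\sum_{o'\in G_2}f^{xy}(o',d)$, so everything stays nonnegative when the cancellations are carried out simultaneously for all good pairs; conservation and the $O$–$Z$ balance are preserved, and the forced capacities $l=u$ on the bad nodes $B_i^{\pm xy}$ are never touched. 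The result satisfies \eqref{eq:f2xycondition}.

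The hard part is not the path-cancellation, which is the clean conceptual step, but the bookkeeping in the first paragraph: one must check that $P_2^{xy+},P_2^{xy-}$ genuinely partition the nodes and, crucially, that no forced (bad) arc is split across the two blocks, so that $g_++g_-$ is valid and the cancellation disturbs only the flexible good flows. This rests on an elementary but case-heavy verification, in the spirit of Lemma \ref{lem:f1xtilde}, that the only admissible arcs joining the two blocks are between good nodes, the bad sources and sinks being incomparable across blocks.
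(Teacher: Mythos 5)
Your proposal is correct and follows essentially the same route as the paper: glue the solutions of $P_2^{xy+}$ and $P_2^{xy-}$ (which is legitimate since their node sets meet only in $\{O,Z\}$ and their bounds are restrictions of those of $P^{xy}$), then eliminate the remaining good-to-good flow. Your path-cancellation along $O\to o\to d\to Z$ is just an explicit rendering of the paper's step of zeroing $f^{xy}(o,d)$ for $o\in G_2$, $d\in G_1$ and redefining the arcs at $O$ and $Z$ by conservation, so the two arguments coincide.
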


\begin{proof}
	Let $f_2^{xy+}$ and $f_2^{xy-}$ be solutions to $P_2^{xy+}$ and $P_2^{xy-}$ respectively. Note that, since $\mathcal{N}_2^{xy+}\cap\mathcal{N}_2^{xy-}=\{O,Z\}$, we can define
\begin{equation*}\label{deff2}
\tilde f^{xy}(o,d)= \begin{cases}f_2^{xy+}(o,d) & \mbox{if } o,d\in \mathcal{N}_2^{xy+},\\
f_2^{xy-}(o,d) & \mbox{if } o,d\in \mathcal{N}_2^{xy-},\\
0&\mbox{otherwise.}\end{cases}
\end{equation*}
It is clear that $\tilde f^{xy}$ is a flow from $O$ to $Z$ in the same network of $P^{xy}$ 
and, from the definitions of problems $P^{xy}$, $P_2^{xy+}$, $P_2^{xy-}$, it satisfies the bounds in Table \ref{tab:P2xybounds}.

Now, for $o, d\in\mathcal{N}^{xy}$ such that $o\ne O$, $d\ne Z$, let
\begin{equation*}
f^{xy}(o,d)= \begin{cases}0 & \mbox{if } d\in G_1, o\in G_2, \\
\tilde f^{xy}(o,d) &\mbox{otherwise.}\end{cases}
\end{equation*}
Let also $f^{xy}(O,b)=f^{xy}(b,T_1^{xy})$, for $b\in T_2^{xy}$, and $f^{xy}(a,Z)=f^{xy}(T_2^{xy},a)$,  for $a\in T_1^{xy}$. Then $f^{xy}$ is a solution to $P^{xy}$ satisfying \eqref{eq:f2xycondition}.
\end{proof}
In the construction of the OMC in Section \ref{sec:coc} we consider solutions $f^{xy}$ to $P^{xy}$, satisfying \eqref{eq:f2xycondition}. In Table \ref{tab:tabla2xy} we show conditions for a positive flow.

  \begin{table}[!h]
	\begin{center}
				{\caption{\label{tab:tabla2xy}Necessary conditions for positive flow $f^{xy}(b,a)$, with $b\in T_2^{xy}, a\in T_1^{xy}$. Symbol $-$ indicates 0 flow.}}
		{\scalebox{1}{
				\begin{tabular}{c|cccccc}
					\hline
					$b\in \backslash\; a\in $&$G_1^{-x}$&$G_1^{-y}$&$G_1^{-x\bullet}$&$G_1^{-y\bullet}$&$B_1^{+xy}$&$B_1^{-xy}$\\
					\hline
					$G_2^{+x}$&$-$&$ -$&$- $&$- $&$\eta_a\le\xi_b$&$- $\\ \hline
					$G_2^{+y}$&$- $&$-$&$- $&$- $&$- $&$\eta_a\le\xi_b $\\ \hline
					$G_2^{+x\bullet}$&$- $&$-$&$- $&$- $&$\eta_a\le\xi_b$&$- $\\ \hline
					$G_2^{+y\bullet} $&$- $&$-$&$- $&$- $&$- $&$\eta_a\le\xi_b $\\ \hline
					$B_2^{-xy}$&$\eta_a\le\xi_b $&$-$&$\eta_a\le\xi_b $&$- $&$- $&$\eta_a\le\xi_b $\\ \hline
					$B_2^{+xy}$&$-$&$\eta_a\le\xi_b $&$- $&$ \eta_a\le\xi_b$&$\eta_a\le\xi_b$&$- $\\
					\hline
		\end{tabular}}}
	\end{center}
\end{table}

\section{Construction of the coupling}\label{sec:coc}
\label{sec:firstcoupling}
We define a generator derived from the solutions to the network flow problems studied in Section \ref{tnfp}.  Proposition \ref{prop:gencoup} establishes that this generator is indeed the generator of an OMC of $(\eta_t)$ and $(\xi_t)$. This proves that conditions \eqref{eq:c1} and \eqref{eq:c2}  are sufficient for stochastic comparison. As a result, the proof of Theorem \ref{teorema} is concluded.
\begin{defn}
	\label{def:gencoup}
	Let the generator of  $(\eta_t',\xi_t')$,  acting on  $g:\{(\eta,\xi)\in\Omega\times \Omega:\ \eta\le\xi\}\to\mathbb{R}$,  be defined by
	\begin{equation}
	\label{eq:gen1}
	 \hspace{-200pt}{\cal L}_cg(\eta,\xi)=\sum_{x\in \S}\sum_{a\in G_1^{+x}}c_a^1(\eta)\left(g\left(\eta_a,\xi\right)-g(\eta,\xi)\right)
	\end{equation}
	\begin{equation}
	\label{eq:gen10}
	 \hspace{-160pt}+\sum_{x\in \S}\sum_{b\in G_2^{-x}}c_b^2(\xi)\left(g\left(\eta,\xi_b\right)-g(\eta,\xi)\right)
	\end{equation}
	\begin{equation}
	\label{eq:gen2}
	 \hspace{20pt}+\sum_{x\in \S}\sum_{a\in G_1^{-x}\cup G_1^{-x\bullet}}\Big(c_a^1(\eta)- f^x(B_2^{-x},a)-\sum_{
	y\sim x}f^{xy}(B_2^{-xy},a)\Big)
	\left(g\left(\eta_a,\xi\right)-g(\eta,\xi)\right)
	\end{equation}
	\begin{equation}
	\label{eq:gen11}
	 \hspace{15pt}+\sum_{x\in \S}\sum_{b\in G_2^{+x}\cup G_2^{+x\bullet}}\Big(c_b^2(\xi)- f^x(b,B_1^{+x})-\sum_{y\sim x}f^{xy}(b,B_1^{+xy})\Big)
	\left(g\left(\eta,\xi_b\right)-g(\eta,\xi)\right)
	\end{equation}
	\begin{equation}
	\label{eq:gen3}
	 \hspace{-100pt}+\sum_{x\in \S}\sum_{a\in G_1^{-x}\cup G_1^{-x\bullet}}\sum_{b\in B_2^{-x}} f^x(b,a)\left(g\left(\eta_a,\xi_b\right)-g(\eta,\xi)\right)
	\end{equation}
	\begin{equation}
	\label{eq:gen5}
	 \hspace{-105pt}+\sum_{x\in \S}\sum_{a\in B_1^{+x}}\sum_{b\in G_2^{+x}\cup G_2^{+x\bullet}} f^x(b,a)
	\left(g\left(\eta_a,\xi_b\right)-g(\eta,\xi)\right)
	\end{equation}
	\begin{equation}
	\label{eq:gen4}
	 \hspace{-80pt}+\sum_{x\in \S}\sum_{a\in G_1^{-x}\cup G_1^{-x\bullet}}\sum_{y\sim x}\sum_{b\in B_2^{-xy}}f^{xy}(b,a)
	\left(g\left(\eta_a,\xi_b\right)-g(\eta,\xi)\right)
	\end{equation}
	\begin{equation}
	\label{eq:gen6}
	\hspace{-80pt}+\sum_{x\in \S}\sum_{y\sim x}\sum_{a\in B_1^{-xy}}\sum_{b\in G_2^{+y}\cup G_2^{+y\bullet}}
	f^{xy}(b,a)\left(g\left(\eta_a,\xi_b\right)-g(\eta,\xi)\right)
	\end{equation}
	\begin{equation}
	\label{eq:genx}
	\hspace{-100pt}+\sum_{x\in \S}\sum_{y\sim x}\sum_{a\in B_1^{-xy}}\sum_{b\in B_2^{-xy}}
	f^{xy}(b,a)\left(g\left(\eta_a,\xi_b\right)-g(\eta,\xi)\right).
	\end{equation}
	\end{defn}

\begin{rem}
	Terms \eqref{eq:gen1} and \eqref{eq:gen10} in the definition of ${\cal L}_c$ above correspond to uncoupled arrivals (departures) in the first (second) component of individuals at site $x$, that do not break the order. Terms \eqref{eq:gen2} and \eqref{eq:gen11} are uncoupled changes of the first (second) component, corresponding to  departures or outbound migrations (arrivals or inbound migrations) of individuals at site $x$ in the first (second) component; the rates are the remainders of the original rates once these changes have been coupled with changes in the other component in \eqref{eq:gen3}-\eqref{eq:genx}. Terms \eqref{eq:gen3} and \eqref{eq:gen5} represent coupled changes of a departure (arrival) in the second (first) component with a departure or outbound migration (arrival or inbound migration) the the first (second) component. The $b\in B_2^{-x}$ ($a\in B_1^{+x}$) in the sum indicates that these departures (arrivals) would break the order if the component changed alone, so they need to be coupled with changes of the other component to maintain the order. Terms \eqref{eq:gen4} and \eqref{eq:gen6} represent coupled changes of an outbound migration from (inbound migration to) $x$ in the second (first) component, coupled with a departure or outbound migration (arrival or inbound migration) of the other component. Finally,  \eqref{eq:genx} represents coupled changes of migrations from $x$ to $y$, in both components. Note that $a\in B_1^{-xy}$ and $b\in B_2^{-xy}$, so both changes would break the order if the components were allowed to evolve independently.
\end{rem}

\begin{prop}
	\label{prop:gencoup}
	${\cal L}_c$ of Definition \ref{def:gencoup} is the generator of an OMC of  $(\eta_t)$ and $(\xi_t)$. 
\end{prop}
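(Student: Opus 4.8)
The plan is to check the three defining properties of an OMC for the process generated by ${\cal L}_c$: that all its rates are nonnegative and its total jump rate finite (so it generates a bona fide Markov process, which by order preservation lives on $\{\eta\le\xi\}$), that every transition preserves the order, and that the two marginal generators coincide with ${\cal L}_1$ and ${\cal L}_2$. Order preservation is the quickest point and I would dispatch it first. The uncoupled terms \eqref{eq:gen1}, \eqref{eq:gen2}, \eqref{eq:gen10}, \eqref{eq:gen11} only move changes lying in the ``good'' sets $G_1^{e}$, $G_2^{e}$, which by Definition \ref{def:goodbad} satisfy $\eta_a\le\xi$ and $\eta\le\xi_b$ respectively; and each coupled term \eqref{eq:gen3}--\eqref{eq:genx} carries a factor $f^x(b,a)$ or $f^{xy}(b,a)$ which, by the arc structure of $P^x$ and $P^{xy}$ recorded in the zero-flow patterns of Tables \ref{tab:tabla1x} and \ref{tab:tabla2xy}, is positive only when $\eta_a\le\xi_b$. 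Hence every jump keeps $\eta'\le\xi'$.

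For nonnegativity, the only rates not manifestly $\ge 0$ are the remainders in \eqref{eq:gen2} and \eqref{eq:gen11}. Fix $a\in G_1^{-x}\cup G_1^{-x\bullet}$ for \eqref{eq:gen2}. The key is to dominate the subtracted pair-flows by the single-site flow: the upper bound on the arc $(a,Z)$ in Table \ref{tab:P2xybounds}, combined with the zero-flow entries in the corresponding columns of Table \ref{tab:tabla2xy}, yields $f^{xy}(B_2^{-xy},a)=f^{xy}(T_2^{xy},a)\le f^x(B_2^{-xy},a)$ for each $y\sim x$. Since the sets $B_2^{-xy}\subseteq B_2^{-x\bullet}$ are disjoint, summing over $y$ gives
\begin{equation*}
f^x(B_2^{-x},a)+\sum_{y\sim x}f^{xy}(B_2^{-xy},a)\le f^x(B_2^{-x}\cup B_2^{-x\bullet},a)=f^x(R_2^{x},a)\le c_a^1(\eta),
\end{equation*}
where the last inequality is the second line of \eqref{eq:problem1x} (using $a\in S_1^{x}$ and the fact, read off Table \ref{tab:tabla1x}, that $f^x(T_2^x,a)=f^x(R_2^x,a)$ for such $a$). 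This is precisely nonnegativity of the rate in \eqref{eq:gen2}, and \eqref{eq:gen11} is handled symmetrically.

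The core of the argument is matching the $\eta$-marginal, the $\xi$-marginal being symmetric. Taking $g(\eta,\xi)=h(\eta)$ annihilates \eqref{eq:gen10} and \eqref{eq:gen11} and turns every remaining term into $h(\eta_a)-h(\eta)$, so it suffices to show that for each first-component change $a$ the rates of all terms in which $a$ occurs sum to $c_a^1(\eta)$; comparison with the form \eqref{eq:generatoreta2} of ${\cal L}_1$ then closes the marginal. I would organise this by the type of $a$, recalling that $C_1^{-x}=G_1^{-x}$, that $C_1^{-x\bullet}=G_1^{-x\bullet}\cup B_1^{-x\bullet}$, that $C_1^{+x}=G_1^{+x}\cup B_1^{+x}$, and that inbound migrations are accounted at their source site. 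For $a\in G_1^{+x}$ only \eqref{eq:gen1} contributes, with rate $c_a^1(\eta)$. For $a\in B_1^{+x}$ only \eqref{eq:gen5} contributes, and the identity $f^x(T_2^x,a)=c_a^1(\eta)$ from \eqref{eq:problem1x} together with the $B_1^{+x}$-column of Table \ref{tab:tabla1x} (positive flow only from $G_2^{+x}\cup G_2^{+x\bullet}$) gives total rate $c_a^1(\eta)$. For $a\in G_1^{-x}\cup G_1^{-x\bullet}$ the contributions of \eqref{eq:gen2}, \eqref{eq:gen3}, \eqref{eq:gen4} telescope, the flows subtracted in the remainder being exactly restored by the two coupled terms, leaving $c_a^1(\eta)$. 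Finally, for $a\in B_1^{-xy}$ the terms \eqref{eq:gen6} and \eqref{eq:genx} add up, via $f^{xy}(T_2^{xy},a)=c_a^1(\eta)$ and the $B_1^{-xy}$-column of Table \ref{tab:tabla2xy}, to $c_a^1(\eta)$.

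I expect this last bookkeeping to be the main obstacle: for each family of changes one must identify precisely which of the nine terms \eqref{eq:gen1}--\eqref{eq:genx} feed into it and invoke the correct flow-conservation equation and the correct zero-flow entry, being careful that the paired problems were built (through $P_1^{xy\pm}$, $P_2^{xy\pm}$ and Proposition \ref{prop:P2xy+-}, with property \eqref{eq:f2xycondition}) exactly so that these cancellations are consistent across the single-site and paired levels. The remaining requirement, finiteness of the total jump rate so that ${\cal L}_c$ generates a genuine process, then follows from the marginal identities together with the standing assumption $\sum_{a\in C^x}(c_a^1(\eta)+c_a^2(\xi))<\infty$ of Remark \ref{condicionesgenerales}.
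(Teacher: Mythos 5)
Your proposal is correct and follows essentially the same route as the paper's proof: the same domination $f^{xy}(B_2^{-xy},a)\le f^{x}(B_2^{-xy},a)$ (row 6 of Table \ref{tab:P2xybounds}) combined with flow conservation in $P^x$ for nonnegativity of the remainders in \eqref{eq:gen2} and \eqref{eq:gen11}; order preservation read off the arc condition $\eta_a\le\xi_b$; and the identical grouping of terms for the $\eta$-marginal --- \eqref{eq:gen1} with \eqref{eq:gen5} via Lemma \ref{lem:f1xtilde}(a)/Table \ref{tab:tabla1x}, the telescoping of \eqref{eq:gen2}, \eqref{eq:gen3}, \eqref{eq:gen4}, and \eqref{eq:gen6} with \eqref{eq:genx} via $f^{xy}(T_2^{xy},a)=c_a^1(\eta)$ and the last column of Table \ref{tab:tabla2xy} --- with the $\xi$-marginal by symmetry.

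The one point you pass over too quickly is the final verification that ${\cal L}_c$ actually generates a Markov process on $(W\times W)^{\S}$. On an infinite lattice, a uniformly bounded per-site jump rate is not by itself sufficient: one must also check the finite-range condition (2.2) of \cite{Pen} for the coupled process. The paper does this by observing that any coupled change involving site $x$ can only involve, or have its rate depend on, sites within distance $2\delta$ of $x$ (a single-site change at $x$ in one component couples with changes at $x$, possibly together with some $y\sim x$, in the other; a change at $x$ and $y$ couples with changes at $x$ alone, at $y$ alone, at $x$ with $y$, or at $y$ with some $z\sim y$). This is a short argument, but it is needed before the existence theorem of \cite{Pen} can be invoked, so it should be added to your plan alongside the rate bound $\sum_{a\in C^x}(c_a^1(\eta)+c_a^2(\xi))$.
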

\begin{proof} 
	We first check that rates in \eqref{eq:gen2} and \eqref{eq:gen11} are non-negative.  For \eqref{eq:gen2} note, from row 6 of Table \ref{tab:P2xybounds}, that $f^{xy}(B_2^{-xy},a)\le f^{x}(B_2^{-xy},a)$,  $\fora x\in\mathcal{S},\ y\sim x,\ a\in G_1^{-x}\cup G_1^{-x\bullet}$. So,
	\begin{equation*}
\begin{split}
f^x(B_2^{-x},a)+\sum_{y\sim x}f^{xy}(B_2^{-xy},a)&\le f^x(B_2^{-x},a)+\sum_{y\sim x}f^{x}(B_2^{-xy},a)\\
&=f^x(B_2^{-x}\cup B_2^{-x\bullet},a)\\
&=f^x(R_2^{x},a)\\
&\le c_a^1(\eta),
\end{split}
	\end{equation*}
where the last inequality follows from the last row of Table \ref{tab:P1xbounds}. The argument is analogous for \eqref{eq:gen11}. 
Moreover, the order-preserving property follows from the definitions of the flows, because a positive flow on arc $(b,a)$  is possible only if $\eta_a\le\xi_b$.  

We now show that the marginals are distributed as the original processes and, to that end, we first let $g(\eta,\xi)=h(\eta)$ and check that ${\cal L}_cg(\eta,\xi)={\cal L}_1h(\eta)$. We proceed to examine all terms in Definition \ref{def:gencoup}. 
	
	Terms \eqref{eq:gen1} and \eqref{eq:gen5} yield, respectively,
	\begin{equation}
	\label{eq:gen1a}
	\sum_{x\in \S}\sum_{a\in G_1^{+x}}c_a^1(\eta)(h(\eta_a)-h(\eta))\quad\text{and}\quad \sum_{x\in \S}\sum_{a\in B_1^{+x}} f^x(G_2^{+x}\cup G_2^{+x\bullet},a)
	(h(\eta_a)-h(\eta)).
	\end{equation}
	Additionally, from the first line of display \eqref{eq:problem1x}, \eqref{goodbad} and the definition of $T_2^x$, we have
	\begin{equation*}
	 c_a^1(\eta)=f^x(T_2^x,a)=f^x(G_2^{+x}\cup G_2^{+x\bullet},a)+f^x(B_2^{-x}\cup B_2^{-x\bullet}\cup B_2^{+x\bullet},a),\fora x\in\mathcal{S}, a\in B_1^{+x}.
	\end{equation*}
	Also $f^x(B_2^{-x}\cup B_2^{-x\bullet}\cup B_2^{+x\bullet},a)=f^x(R_2^{x}\cup B_2^{+x\bullet},a)=0$, for $a\in B_1^{+x}$, from (a) in Lemma \ref{lem:f1xtilde}. Thus, we add both terms in \eqref{eq:gen1a} and obtain
	\begin{equation*}
	\sum_{x\in \S}\sum_{a\in G_1^{+x}\cup B_1^{+x}}c_a^1(\eta)(h(\eta_a)-h(\eta)),
	\end{equation*}
	which equals the first term of generator ${\cal L}_1$ in \eqref{eq:generatoreta2}, because $G_1^{+x}\cup B_1^{+x}=C_1^{+x}$ (see Definition \ref{def:partition}). Furthermore, adding \eqref{eq:gen2}, \eqref{eq:gen3} and \eqref{eq:gen4} results in
	\begin{equation*}
	\sum_{x\in \S}\sum_{a\in G_1^{-x}\cup G_1^{-x\bullet}}c_a^1(\eta)(h(\eta_a)-h(\eta)),
	\end{equation*}
	which is part of the second term in \eqref{eq:generatoreta2}, because  $C_1^{-x}\cup C_1^{-x\bullet}=G_1^{-x}\cup G_1^{-x\bullet}\cup B_1^{-x\bullet}, \fora x\in\S$. Finally, \eqref{eq:gen6} and \eqref{eq:genx} yield 
	\begin{equation}\label{equalityabove}
	\sum_{x\in \S}\sum_{y\sim x}\sum_{a\in B_1^{-xy}}
	f^{xy}(B_2^{-xy}\cup G_2^{+y}\cup G_2^{+y\bullet},a)(h(\eta_a)-h(\eta))=\sum_{x\in \S}\sum_{a\in B_1^{-x\bullet}}c_a^1(\eta)(h(\eta_a)-h(\eta)),
	\end{equation}
	which is the remaining part of the second term in \eqref{eq:generatoreta2}.
	To check \eqref{equalityabove} we have to show that $f^{xy}(B_2^{-xy}\cup G_2^{+y}\cup G_2^{+y\bullet},a)=c_a^1(\eta)$,   for all $x\in\mathcal{S}$, $y\sim x$ and $a\in B_1^{-xy}$. Note, from row 5 of Table \ref{tab:P2xybounds}, that $f^{xy}(T_2^{xy},a)=c_a^1(\eta)$,  $\fora a\in B_1^{-xy}$. Further, from Definition \ref{def:Txy} and the last column of Table \ref{tab:tabla2xy}, 
	\begin{equation*}
	f^{xy}(T_2^{xy},a)=f^{xy}(G_2^{+x}\cup G_2^{+x\bullet}\cup B_2^{+xy}\cup B_2^{-xy}\cup G_2^{+y}\cup G_2^{+y\bullet},a)
	=f^{xy}(B_2^{-xy}\cup G_2^{+y}\cup G_2^{+y\bullet},a).
	\end{equation*}
	The partial results above show that the first marginal process $(\eta_t')$ is distributed as $(\eta_t)$.
	
	For the second marginal $(\xi_t')$ we let $g(\eta,\xi)=h(\xi)$. The expression for the generator ${\cal L}_2$ of $(\xi_t)$ can be written in a form analogous to \eqref{eq:generatoreta2}, that is,
	\begin{equation*}
	\label{eq:generatorxi2}
	{\cal L}_2h(\xi)=\sum_{x\in \S}\left(\sum_{b\in C_2^{-x}}
	c_b^2(\xi)(h(\xi_b)-h(\xi))+\sum_{b\in C_2^{+x}\cup C_2^{+x\bullet}}c_b^2(\xi)(h(\xi_b)-h(\xi))\right).
	\end{equation*}
	Due to the symmetry of the generator of the coupling, the task of checking the second marginal is omitted.

The last step of the proof is to show that ${\cal L}_c$ is in fact the generator of an IPS on $(W\times W)^S$. To that end we show that conditions (2.2) and (2.4) in \cite{Pen} are fulfilled. Let $x\in S$ and note that in the above construction, a change at $x$ alone, in the first component, can be coupled with a change at $x$ and perhaps at another site $y\sim x$ in the second component. A change at $x$ and $y$ ($y\sim  x$), in the first component,  can be coupled either with a change at $x$ alone, at $y$ alone, at $x$  with $y$, or at $y$  with $z\sim  y$, in the second component. Therefore, the set of sites which change with $x$ or affect the change rate of $x$, is contained in the set $\{y\in\mathcal{S}:d_{\mathcal{S}}(x,y)\le2\delta\}$. Also, by the definition of coupling, the total rate involving site $x$ in ${\cal L}_c$ is bounded above by
$\sum_{a\in C^x}(c_a^1(\eta)+c_a^2(\xi))$,
which, by  \eqref{condexist2}, is uniformly bounded.	
\end{proof}

 \begin{rem}\label{noordenado} The generator in Definition \ref{def:gencoup} is defined only for $\eta\le\xi$, which
 suffices to establish stochastic ordering. An extension of the generator for $\eta\not\le\xi$ is straightforward. 
 \end{rem}

\begin{rem} Our construction of the coupling is not explicit, in the sense that it is written in terms of the values of the flows $f^x$, $f^{xy}$, which are not explicitly given. An explicit version of the coupling in a particular problem requires that the corresponding network flow problems be solved. This can be done either by  inspection, in simple problems, or by means of an algorithm, such as the max-flow min-cut algorithm for finite networks \cite{FF}, in more complex problems. Note that all the nodes in the network representing changes with rate zero can be deleted since there is no flow through them. This means that, in most situations, the network flow problems to be solved have a small number of nodes.  See Section \ref{excp} for three examples of construction of the coupling.
\end{rem}

\begin{rem}\label{objetivo}  The  flow problems in the construction of the coupling are posed as feasibility problems, meaning they have no specified objective function. It may be interesting to include such a function so that the solution minimizes a certain quantity. For instance, we might aim to keep the two components together as much as possible. This could be  achieved by incorporating an objective function that penalises departures from this situation. While we have not explored this possibility further, we believe it could prove useful when studying the limiting behaviour of the processes. For instance, in \cite{GS} the authors define an OMC such that discrepancies between the marginals are non-increasing. This property facilitates the finding of invariant measures in some examples.\end{rem}

\section{Examples}
 \label{secex}

In this section we illustrate the applicability of conditions \eqref{eq:c1} and \eqref{eq:c2}. 
We introduce and analyse a spatial population model allowing births, deaths and migrations of flocks between sites. This model extends the scope of those studied in \cite{Bor2012}.  Additionally, we explore conservative models, showing how our conditions reduce to those presented in \cite{GS} and \cite{GS23} for attractiveness. The section concludes with examples of  constructing the OMC. Throughout,  we adhere to the notation used in  \cite{Bor2012}, whenever feasible. 

Let $\S=\ZZ^d$ and $\Omega=W^\S$, with $W=\{0,1,\ldots,N\}$, for some $N\in\NN$. We take $\mathcal{V}(x)=\{y\in\mathcal{S}:\Vert x-y\Vert_1\le1\}$, that is, the site $x$ together with its $2d$ nearest neighbours. This means that migrations are possible only between neighbouring sites. We say that $(\zeta_t)$ follows model BDM (acronym for births, deaths, migration) if 
\begin{itemize}
	\item births $\zeta_t(x)\to\zeta_t(x)+1$ have rates $\zeta_t(x)\mathbf{1}_{\{\zeta_t(x)<N\}}$,
	\item deaths $\zeta_t(x)\to\zeta_t(x)-1$ have rates $$\zeta_t(x)\left(\boldsymbol{\phi}_A(\mathbf{r})\mathbf{1}_{\{\zeta_t(x)\le N_A\}}+\boldsymbol{\phi}(\mathbf{r})\mathbf{1}_{\{\zeta_t(x)>N_A\}}\right)+\boldsymbol{\mu}_1(\mathbf{r})\mathbf{1}_{\{\zeta_t(x)>N-M\}},$$
	\item catastrophic deaths $\zeta_t(x)\to\zeta_t(x)-k$ have rates $\boldsymbol{\mu}_k(\mathbf{r})\mathbf{1}_{\{\zeta_t(x)-k\ge N-M\}}$, $k\ge2$, and 
	\item migrations $(\zeta_t(x), \zeta_t(y))\to(\zeta_t(x)-k, \zeta_t(y)+l)$ have rates $$\boldsymbol{\boldsymbol{\lambda}}_{kl}(\mathbf{r})\mathbf{1}_{\{\zeta_t(x)-k\ge N-M,\zeta_t(y)+l\le N \}}, k, l\ge1, y\sim x,$$
\end{itemize} 
where $\mathbf{r}=\zeta_t^{\mathcal{V}(x)\setminus\{x\}}\in W^{2d}$ denotes the vector of values of $\zeta_t$ at the nearest neighbours of $x$. 
Parameters $N, N_A, M\in\NN$ are such that $N_A, M \le N$, while for each $\mathbf{r}\in\{0,1,\ldots,N\}^{2d}$, $\boldsymbol{\phi}(\mathbf{r}), \boldsymbol{\phi}_A(\mathbf{r})\in\RR_+$, $\boldsymbol{\mu}(\mathbf{r})\in\RR^M_+$  and $\boldsymbol{\Lambda}(\mathbf{r})$ is an $M\times N$ matrix with elements $\boldsymbol{\lambda}_{kl}(\mathbf{r})\in\RR_+$. That is, $\boldsymbol{\phi},\boldsymbol{\phi}_{A},\boldsymbol{\mu}$ and $\boldsymbol{\Lambda}$ are functions of $\mathbf{r}$. The BDM process $(\zeta_t)$ just described is said to have parameter vector  $(\boldsymbol{\phi},\boldsymbol{\phi}_{A},\boldsymbol{\mu},\boldsymbol{\Lambda},N,N_A,M)$.

Note that model BDM includes as particular cases models I, II and III, studied in \cite{Bor2012}. Indeed, model I, with births, deaths and migrations of single individuals, has parameters $\boldsymbol{\phi}=\boldsymbol{\phi}_A, \boldsymbol{\mu}=0$ and $\boldsymbol{\Lambda}$,  none depending on $\mathbf{r}$, with $\boldsymbol{\lambda}_{11}=\lambda/(2d)>0$ and $\boldsymbol{\lambda}_{kl}=0$,  $k>1$ or $l>1$. 
Model II, which incorporates the Allee effect, is the same as model I, with the exception that $\boldsymbol{\phi}\ne\boldsymbol{\phi}_{A}$. 
Finally, model III, allowing migrations of groups of individuals, is as model II, with the exception that 
$\boldsymbol{\Lambda}$ has elements $\boldsymbol{\lambda}_{kk}=\lambda/(2d)>0$, for $k\ge1$, and $\boldsymbol{\lambda}_{kl}=0$, for $k\ne l$.
Note that, while in our BDM model  the rates of change at site $x$ are affected by the values of the neighbouring sites, via the vector $\mathbf{r}$, the models in \cite{Bor2012} do not allow for this dependence.

The distinguishing features of model BDM include the incorporation of catastrophes, with rates given in vector $\boldsymbol{\mu}$, and non-conservative migrations, with rates contained in matrix $\boldsymbol{\Lambda}$. In population dynamics a catastrophe is a random event that results in the loss of a certain  number of individuals. Catastrophes and their consequences have been extensively analysed in various settings in mathematical biology; refer to \cite{junior2023extinction}, \cite{MT} and references therein. In a continuous-time Markovian setting, catastrophes and their impact in population evolution have been studied in several papers. For an extension of skip-free models  (birth and death models), see \cite{CP}. Non-conservative migrations, as referred to here, involve situations where the number of individuals departing from an origin site differs from the number reaching the destination site, given births and deaths may occur during migration. 
The concept of migration with mortality has received much attention in the literature of population dynamics, as can be seen, for example, in \cite{Arr03}, \cite{Gri01}, \cite{HAM}  and \cite{Zho04}.

We state conditions for stochastic ordering in the context of the BDM model. Note that such stochastic comparisons cannot be analysed using results in \cite{Bor}, even if the rates do not depend on $\mathbf{r}$, unless $\boldsymbol{\lambda}_{kl}=0$, for $k\ne l$,  because changes such as $\sigma_{xy}^{-k,l}$, with $k\ne l$, are not allowed in that paper. For $\mathbf{r}, \mathbf{s}\in\{0,\ldots,N\}^{2d}$, the expression $\mathbf{r}\le\mathbf{s}$ is understood componentwise.

\begin{prop}
	\label{prop:ejemplojavier} 
	Let $(\eta_t), (\xi_t)$ follow the  BDM model, with  parameter vectors  $(\boldsymbol{\phi}^i,\boldsymbol{\phi}^i_{A},\boldsymbol{\mu}^i,\boldsymbol{\Lambda}^i,N,N_A,M)$, $i=1,2$, respectively and let
	 $\boldsymbol{\lambda}_{i0}^1=\boldsymbol{\mu}_i^1/(2d)$, $\boldsymbol{\lambda}_{i0}^2=\boldsymbol{\mu}_i^2/(2d)$, $i=1,\ldots,M$. Then $(\eta_t)\le_{st}(\xi_t)$ if, for every $\mathbf{r}, \mathbf{s}$ such that $\mathbf{r}\le\mathbf{s}$, the following conditions hold
	
\begin{equation*}
(a)\quad\boldsymbol{\phi}^1(\mathbf{r})\ge\boldsymbol{\phi}^2(\mathbf{s}),\ \boldsymbol{\phi}^1_A(\mathbf{r})\ge\boldsymbol{\phi}^2_A(\mathbf{s}),
\end{equation*}
		\begin{equation}
	\label{eq:sumlambdale}
	(b)\quad\sum_{i=1}^m\boldsymbol{\lambda}_{ij}^1(\mathbf{r})\le\sum_{i=1}^m\boldsymbol{\lambda}^2_{ik}(\mathbf{s}),\, \fora m,k,j \textup{  s.t.  } 1\le m\le M, 1\le k\le j\le N,
	\end{equation}
	\begin{equation}
	\label{eq:sumlambdage}
	(c)\quad\sum_{j=0}^n\boldsymbol{\lambda}_{ij}^1(\mathbf{r})\ge\sum_{j=0}^n\boldsymbol{\lambda}_{lj}^2(\mathbf{s}),\,  \fora n,i,l \textup{  s.t.  } 0\le n\le N, 1\le i\le l\le M.
	\end{equation}

\end{prop}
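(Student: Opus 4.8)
The plan is to derive the proposition from Theorem~\ref{teorema}: since the hypotheses constrain only the rates, it suffices to prove that (a)--(c) imply conditions \eqref{eq:c1} and \eqref{eq:c2} for every $x\in\S$ and every pair $\eta\le\xi$. So I fix such $x,\eta,\xi$ and put $\mathbf r=\eta^{\V(x)\setminus\{x\}}\le\xi^{\V(x)\setminus\{x\}}=\mathbf s$. The key point making the argument uniform over all sites is that (a)--(c) are assumed for \emph{all} $\mathbf r\le\mathbf s$; hence they may be applied not only with the neighbour vectors of $x$ but also with the neighbour vectors $\mathbf r_y\le\mathbf s_y$ of any $y\sim x$, which is exactly what is needed to control jumps arriving at $x$ from a neighbour.

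First I would sort the changes in $C^x$ by their action on the $x$-coordinate. The changes raising $\eta(x)$ are births $\sigma_x^{+1}$ and inbound migrations $\sigma_{xy}^{+p,q}$ (a jump from $y$ depositing $p$ units at $x$ while removing $q$ from $y$, at rate $\boldsymbol{\lambda}_{qp}(\mathbf r_y)$); those lowering $\xi(x)$ are the deaths and catastrophes $\sigma_x^{-i}$ and the outbound migrations $\sigma_{xy}^{-k,l}$ (rate $\boldsymbol{\lambda}_{kl}(\mathbf r)$). Births carry the rate $\zeta(x)\one_{\{\zeta(x)<N\}}$, identical for both processes, so at a coincidence $\eta(x)=\xi(x)$ each birth is matched by the corresponding birth of the other process at equal rate and contributes nothing net. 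The single death $\sigma_x^{-1}$ carries an additive rate with a $\boldsymbol{\phi}$- or $\boldsymbol{\phi}_A$-contribution and a $\boldsymbol{\mu}_1$-contribution; as \eqref{eq:c1}--\eqref{eq:c2} are linear in the rates I would split it into a death part and a catastrophe part, so that $\boldsymbol{\phi}$-deaths and catastrophes may be treated as separate change types. The convention $\boldsymbol{\lambda}_{i0}=\boldsymbol{\mu}_i/(2d)$ then records each size-$i$ catastrophe as the $l=0$ column of an extended migration matrix; note that \eqref{eq:sumlambdale} involves only genuine arrivals ($k,j\ge1$) whereas \eqref{eq:sumlambdage} includes the $j=0$ term, matching the fact that catastrophes affect only the origin.

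With this bookkeeping the verification should decouple. Condition (a) forces the $\boldsymbol{\phi}$-death rate of $(\eta_t)$ to dominate that of $(\xi_t)$ — at a coincidence both processes lie in the same branch of the threshold $N_A$ — which is exactly what \eqref{eq:c2} asks of the local-death component. For the migration component I would compute the covering relation $\eta_a\le\xi_b$: at a coincidence $\eta(x)=\xi(x),\,\eta(y)=\xi(y)$, an outbound $\sigma_{xy}^{-k,l}$ of $(\xi_t)$ is covered by an outbound $\sigma_{xy}^{-k',l'}$ of $(\eta_t)$ iff $k'\ge k$ and $l'\le l$, while an inbound $\sigma_{xy}^{+p,q}$ of $(\eta_t)$ is covered by an inbound $\sigma_{xy}^{+p',q'}$ of $(\xi_t)$ iff $p'\ge p$ and $q'\le q$. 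Evaluating \eqref{eq:c2} on the rectangular families $D_2=\{k\ge K,\ l\le L\}$ and \eqref{eq:c1} on $D_1=\{p\ge P,\ q\le Q\}$ turns them into $\sum_{k\ge K}\sum_{l\le L}\boldsymbol{\lambda}_{kl}^1(\mathbf r)\ge\sum_{k\ge K}\sum_{l\le L}\boldsymbol{\lambda}_{kl}^2(\mathbf s)$ and $\sum_{q\le Q}\sum_{p\ge P}\boldsymbol{\lambda}_{qp}^1(\mathbf r_y)\le\sum_{q\le Q}\sum_{p\ge P}\boldsymbol{\lambda}_{qp}^2(\mathbf s_y)$, which follow by summing the diagonal instances ($i=l$ in \eqref{eq:sumlambdage}, $k=j$ in \eqref{eq:sumlambdale}) over the appropriate ranges.

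The hard part will be upgrading this from the extremal rectangular families to \emph{all} $D_1\subseteq R_1^x,\,D_2\subseteq R_2^x$. Because a migration moves two coordinates at once, $\eta_a\le\xi_b$ defines a genuine bipartite covering graph and \eqref{eq:c1}--\eqref{eq:c2} are precisely its Hall/max-flow--min-cut feasibility conditions; I would argue that the full cumulative inequalities \eqref{eq:sumlambdale}--\eqref{eq:sumlambdage}, valid at every threshold and every $\mathbf r\le\mathbf s$ (the off-diagonal instances $k<j$, $i<l$ supplying the routing freedom), certify that feasibility and hence the inequalities for every subset. Two points will require care. First, the activation indicators $\one_{\{\zeta(x)-k\ge N-M\}}$ and $\one_{\{\zeta(y)+l\le N\}}$ restrict the changes available to each process, so a covering change of $(\eta_t)$ might be inactive when it is forced to a strictly larger departure; one must use the monotone interval structure of the active ranges, together with the observation that when $\eta(x)<\xi(x)$ the order-breaking changes of $(\xi_t)$ all have departure exceeding the gap $\xi(x)-\eta(x)$, so the gap only relaxes the covering constraints. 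Second, the single death must be covered jointly by its two superposed pieces, which is ensured by (a) together with the $n=0,\ i=l=1$ instance of \eqref{eq:sumlambdage}.
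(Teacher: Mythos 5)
Your setup coincides with the paper's: fix $x$ and $\eta\le\xi$, work with the neighbour vectors $\mathbf{r}\le\mathbf{s}$ (and, correctly, with $\mathbf{r}_y\le\mathbf{s}_y$ for jumps arriving from a neighbour $y$), split the single-death rate into its $\boldsymbol{\phi}$- and $\boldsymbol{\mu}_1$-pieces, use the convention $\boldsymbol{\lambda}_{i0}=\boldsymbol{\mu}_i/(2d)$ to file catastrophes as the $j=0$ column, and reduce everything to verifying \eqref{eq:c1} and \eqref{eq:c2}. Your covering computations and the rectangular/diagonal observations are also correct. But the core of the proof --- the inequalities for an \emph{arbitrary} $D_1\subseteq R_1^x$, $D_2\subseteq R_2^x$ --- is exactly where you stop: ``I would argue that the full cumulative inequalities \ldots certify that feasibility'' is not an argument, and as stated it is essentially circular. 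By Proposition \ref{propositionf1}, the subset inequalities \eqref{eq:c1}--\eqref{eq:c2} \emph{are} the cut (feasibility) conditions of the flow problem $P^x$, so ``certifying max-flow feasibility'' from (b)--(c) is just a restatement of what must be proved, cut by cut, i.e.\ subset by subset; nothing in your text explains how the off-diagonal instances of (b)--(c) handle a general, non-rectangular $D_1$. (Exhibiting an explicit feasible flow built from (b)--(c) would be a legitimate alternative route, but you do not construct one.)

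The missing idea, which is how the paper closes this gap without any Hall/max-flow machinery, is a completion-plus-domination argument. For \eqref{eq:c1}, group the elements of $D_1$ by neighbour and arrival size, $D_1=\bigcup_{y\sim x}\{\sigma_{yx}^{-i,j}: j\in J_y,\ i\in I_y(j)\}$ (plus possibly the birth, handled by equality of birth rates); bound each group's rate sum by completing $I_y(j)$ to the initial segment $\{1,\ldots,m_y(j)\}$ with $m_y(j)=\max I_y(j)$; apply (b) columnwise with the shifted arrival index $k(j)=j-\xi(x)+\eta(x)\le j$; and then check that \emph{every} resulting change $\sigma_{yx}^{-i,k(j)}$ of the second process, $i\le m_y(j)$, lies in $D_1^\uparrow$ because it dominates the single element $\sigma_{yx}^{-m_y(j),j}$ of the \emph{original} $D_1$ (indeed $\xi(x)+k(j)=\eta(x)+j$ and $\xi(y)-i\ge\eta(y)-m_y(j)$). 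This last check is what legitimizes the enlargement: enlarging $D_1$ enlarges $D_1^\uparrow$, so the inclusion goes the wrong way for your purposes, and one must certify all covering changes against the original set; moreover distinct $j$'s give distinct $k(j)$'s, so no rate is counted twice. The symmetric device with the departure index $i(l)=l-\xi(x)+\eta(x)$ and the $j=0$ column proves \eqref{eq:c2}, and your activation worries dissolve because the index ranges ($j\le N-\xi(y)$, $l\le \xi(x)-N+M$) together with $\eta\le\xi$ force the covering changes of the first process to be active. Without this step your proposal establishes the proposition only for rectangular $D_1$, $D_2$, which is not enough.
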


\begin{proof}
Let $x\in\S$ and $\eta,\xi\in \Omega$, such that $\eta\le\xi$. Throughout the proof, all  rates $c_a^1(\eta)$ depend on $\mathbf{r}=\eta^{\mathcal{V}(x)\setminus\{x\}}$. However, as $\eta$ and $x$, and therefore $\mathbf{r}$, are fixed, we   omit $ \mathbf{r}$ for simplicity and write, for instance, $\boldsymbol{\lambda}_{ij}^1$ instead of $\boldsymbol{\lambda}_{ij}^1(\mathbf{r})$. The same applies for the second process.

  We first consider \eqref{eq:c1} and denote $a_1=\sigma_x^{+1}, a_{ij}=\sigma_{yx}^{-i,j}$, for $y\sim x$. Note that any $D_1\subseteq R_1^{x}$
 can be written either as 
  \begin{equation}
 \label{eq:D1}
D_1=\bigcup\limits_{y\sim x}\{a_{ij}:j\in J_y,i\in I_y(j)\}\quad\text{or}
\end{equation}
\begin{equation}
\label{eq:D1a1}
 D_1=\bigcup\limits_{y\sim x}\{a_{ij}:j\in J_y,i\in I_y(j)\}\cup\{a_1\},
\end{equation}
for some  $J_y\subseteq\{\xi(x)-\eta(x)+1,\ldots,N-\eta(x)\}$ and  $I_y(j)\subseteq\{1,\ldots,\eta(y)-N+M\}$.
Note also that, for  $j\in J_y$ and $i\in I_y(j)$, we have $c_{a_{ij}}^1(\eta)=\boldsymbol{\lambda}_{ij}^1$ and $ c_{a_{ik(j)}}^2(\xi)=\boldsymbol{\lambda}_{ik(j)}^2$, where $k(j)=j-\xi(x)+\eta(x)\le j$. 

Let $m_y(j)=\max I_y(j)$, for $j\in J_y$. Then, if $D_1$ is as in  \eqref{eq:D1}, we get
 \begin{equation}
 \label{eq:ineqD1}
 \begin{split}
 \sum_{a\in D_1}c_a^1(\eta)
 &=\sum_{y\sim x}\sum_{j\in J_y}\sum_{i\in I_y(j)}\boldsymbol{\lambda}^1_{ij}\le \sum_{y\sim x}\sum_{j\in J_y}\sum_{i=1}^{m_y(j)}
 \boldsymbol{\lambda}^1_{ij}\le \sum_{y\sim x}\sum_{j\in J_y}\sum_{i=1}^{m_y(j)}\boldsymbol{\lambda}_{ik(j)}^2\\
 &= \sum_{y\sim x}\sum_{j\in J_y}\sum_{i=1}^{m_y(j)}c_{a_{ik(j)}}^2(\xi)\le\sum_{b\in D_1^\uparrow}c_b^2(\xi).
 \end{split}
 \end{equation}
 The first inequality in \eqref{eq:ineqD1} follows from the inclusion $I_y(j)\subset\{1,\ldots,m_y(j)\}$ and the second one, from \eqref{eq:sumlambdale}. Also, the second equality  follows from $c_{a_{ik(j)}}^2(\xi)=\boldsymbol{\lambda}_{ik(j)}^2$, for $j\in J_y, 1\le i\le m_y(j)$. Further, for the last inequality note that 
\begin{equation*}
	\xi_{a_{ik(j)}}(x)=\xi(x)+j-\xi(x)+\eta(x)=\eta_{a_{m_y(j)j}}(x)\ {\rm and }\	\xi_{a_{ik(j)}}(y)=\xi(y)-i\ge \eta(y)-m_y(j)=\eta_{a_{m_y(j)j}}(y),
\end{equation*}
so $a_{ik(j)}\in D_1^\uparrow$, because $a_{m_y(j)j}\in D_1$, and condition \eqref{eq:c1} holds.

Finally, if $D_1$ is as in  \eqref{eq:D1a1}, the inequalities above remain valid but a term $c^1_{a_1}(\eta)=\eta(x)\mathbf{1}_{\{\eta(x)+1\le N \}}$ must be added in \eqref{eq:ineqD1}. Now, since  $\eta(x)=\xi(x)$ because $a_1\in D_1$, we have 
\begin{equation*}
	c^1_{a_1}(\eta)=\eta(x)\mathbf{1}_{\{\eta(x)+1\le N \}}=\xi(x)\mathbf{1}_{\{\xi(x)+1\le N \}}=c^2_{a_1}(\xi),
\end{equation*}
and so, $a_1\in D_1^\uparrow$. Hence, condition \eqref{eq:c1} holds in this case too.

 We proceed to check \eqref{eq:c2}. 
%
 Let  $b_l=\sigma_x^{-l}, b_{lj}=\sigma_{xy}^{-l,+j}$, for $x\in \mathcal{S}$ and $y\sim x$ and note that $D_2\subseteq R_2^{x}$ can be written as 
 \begin{equation}
 	\label{eq:D2}
 D_2=\{b_l:l\in L_2\}\cup \bigcup_{y\sim x}\{b_{lj},l\in L_y, j\in J_y(l)\},
 \end{equation}
 for some $L_2\subseteq\{\xi(x)-\eta(x)+1,\ldots,\xi(x)-N+M\}$, $L_y\subseteq \{\xi(x)-\eta(x)+1,\ldots,\xi(x)-N+M\}$ and  $J_y(l)\subseteq\{1,\ldots,N-\xi(y)\}$.  Also, for $l\in L_y\cup L_2$, let
\begin{equation*} 
J_y^0(l)= \begin{cases}J_y(l) &\mbox{if }l\in L_y\setminus L_2, \\
J_y(l)\cup\{0\} & \mbox{if }l\in L_2\cap L_y,\\
\{0\} & \mbox{if }l\in L_2\setminus L_y.
\end{cases}
\end{equation*}
Further, let $m_y(l)=\max J_y(l)$ and $m_y^0(l)=\max J_y^0(l)$.
 
 Suppose first that $b_1\not\in D_2$ (that is, $1\not\in L_2$) and observe that 
 \begin{equation}
 	\label{eq:cbl}
 	c_{b_l}^2(\xi)= \boldsymbol{\mu}_l^2=2d\boldsymbol{\lambda}_{l0}^2,\ \fora l\in L_2{\rm ;}\qquad  c_{b_{lj}}^2(\xi)=\boldsymbol{\lambda}_{lj}^2,\ \fora l\in L_y, j\in J_y(l).
 \end{equation} Then, from \eqref{eq:D2} and \eqref{eq:cbl},
\begin{equation}
\label{eq:sumbinD2}
\begin{split}
\sum_{b\in D_2}c_b^2(\xi)&=\sum_{l\in L_2}c_{b_l}^2(\xi)+\sum_{y\sim x}\sum_{l\in L_y}\sum_{j\in J_y(l)}c_{b_{lj}}^2(\xi)\\
&=\sum_{y\sim x}\Big(\sum_{l\in L_2}\boldsymbol{\lambda}_{l0}^2+\sum_{l\in L_y}\sum_{j\in J_y(l)}\boldsymbol{\lambda}_{lj}^2\Big).\\
\end{split}
\end{equation}
Moreover, 
\begin{equation}
\label{eq:sumlinL2}
\begin{split}
\sum_{l\in L_2}\boldsymbol{\lambda}_{l0}^2+\sum_{l\in L_y}\sum_{j\in J_y(l)}\boldsymbol{\lambda}_{lj}^2&=\sum_{l\in L_2\cap L_y}\boldsymbol{\lambda}_{l0}^2+\sum_{l\in L_2\setminus L_y}\boldsymbol{\lambda}_{l0}^2+\sum_{l\in L_y\cap L_2}\sum_{j\in J_y(l)}\boldsymbol{\lambda}_{lj}^2+\sum_{l\in L_y\setminus L_2}\sum_{j\in J_y(l)}\boldsymbol{\lambda}_{lj}^2\\
&=\sum_{l\in L_2\setminus L_y}\sum_{j\in J_y^0(l)}\boldsymbol{\lambda}_{lj}^2+\sum_{l\in L_y\cap L_2}\sum_{j\in J_y^0(l)}\boldsymbol{\lambda}_{lj}^2+\sum_{l\in L_y\setminus L_2}\sum_{j\in J_y^0(l)}\boldsymbol{\lambda}_{lj}^2\\
&\le\sum_{l\in L_2\cup L_y}\sum_{j\le m_y^0(l)}\boldsymbol{\lambda}_{lj}^2\\ &\le\sum_{l\in L_2\cup L_y}\sum_{j\le m_y^0(l)}\boldsymbol{\lambda}_{i(l)j}^1\\
&=\sum_{l\in L_2}\boldsymbol{\lambda}_{i(l)0}^1+\sum_{l\in L_y}\sum_{j=1}^{m_y(l)}\boldsymbol{\lambda}_{i(l)j}^1\\
&=\sum_{l\in L_2}\boldsymbol{\mu}_{i(l)}^1/(2d)+\sum_{l\in L_y}\sum_{j=1}^{m_y(l)}c_{b_{i(l)j}}^1(\eta),
\end{split}
\end{equation}
where the second inequality in display \eqref{eq:sumlinL2} follows from \eqref{eq:sumlambdage}, with $i(l)=l-\xi(x)+\eta(x)$, and the last equality follows from $c_{b_{i(l)j}}^1(\eta)=\boldsymbol{\lambda}_{i(l)j}^1$, for $l\in L_2\cup L_y, j\le m_y^0(l)$. Finally, from \eqref{eq:sumbinD2} and \eqref{eq:sumlinL2}, we obtain
\begin{equation}
\label{eq:sumbinD22}
\begin{split}
\sum_{b\in D_2}c_b^2(\xi)&\le\sum_{l\in L_2}\boldsymbol{\mu}_{i(l)}^1+\sum_{y\sim x}\sum_{l\in L_y}\sum_{j=1}^{m_y(l)}c_{b_{i(l)j}}^1(\eta)\le\sum_{a\in D_2^\downarrow}c_a^1(\eta).
\end{split}
\end{equation}
The last inequality in \eqref{eq:sumbinD22}  holds because $b_{i(l)}\in D_2^\downarrow$,  $\fora l\in L_2$ 
and $b_{i(l)j}\in D_2^\downarrow$,  $\fora l\in L_y, 1\le j\le m_y(l)$. 
 Indeed, $\fora l\in L_2$ we have 
 \begin{equation*}
 	\eta_{b_{i(l)}}(x)=\eta(x)-i(l)=\eta(x)-l+\xi(x)-\eta(x)=\xi(x)-l=\xi_{b_l}(x)
 \end{equation*} and so $b_{i(l)}\in D_2^\downarrow$. Next, $\fora l\in L_y, 1\le j\le m_y(l)$, we have
  \begin{equation*}
 \begin{split}
 	\eta_{b_{i(l)j}}(x)&=\eta(x)-l+\xi(x)-\eta(x)=\xi(x)-l=\xi_{b_{i(l)m_y(l)}}(x),\\
 	\eta_{b_{i(l)j}}(y)&=\eta(y)-j\le \xi(y)-m_y(l)=\xi_{b_{i(l)m_y(l)}}(y)
 \end{split}
 \end{equation*} and so, $b_{i(l)j}\in D_2^\downarrow$, since $b_{i(l)m_y(l)}\in D_2$. Therefore, from \eqref{eq:sumbinD22} we conclude that \eqref{eq:c2} holds.

Finally, if $b_1\in D_2$ (that is, $1\in L_2$), a term $\xi(x)(\boldsymbol{\phi}_A^2\mathbf{1}_{\{\xi(x)\le N_A\}}+\boldsymbol{\phi}^2\mathbf{1}_{\{\xi(x)>N_A}\})$ must be added to \eqref{eq:sumbinD2}. Since $b_1\in D_2$ implies $\eta(x)=\xi(x)$, we have
\begin{equation}
\label{eq:desphi}
\xi(x)\left(\boldsymbol{\phi}_A^2\mathbf{1}_{\{\xi(x)\le N_A\}}+\boldsymbol{\phi}^2\mathbf{1}_{\{\xi(x)>N_A}\}\right)\le
\eta(x)\left(\boldsymbol{\phi}^1_A\mathbf{1}_{\{\eta(x)\le N_A\}}+\boldsymbol{\phi}^1\mathbf{1}_{\{\eta(x)>N_A}\}\right).
\end{equation}
So, adding the RHS of \eqref{eq:desphi} to \eqref{eq:sumbinD22}, we see that \eqref{eq:c2} also holds in this case.
\end{proof}

 \begin{coro}
	\label{cor:attractive}
	Suppose $(\eta_t)$ follows the BDM model with  parameter  vector $(\boldsymbol{\phi},\boldsymbol{\phi}_{A},\boldsymbol{\mu},\boldsymbol{\Lambda},N,N_A,M)$ and let $\boldsymbol{\lambda}_{i0}=\boldsymbol{\mu}_i/(2d)$, for $i=1,\ldots,M$. Then $(\eta_t)$ is attractive if $\fora \mathbf{r}, \mathbf{s}$ such that $\mathbf{r}\le\mathbf{s}$,
		\begin{equation*}
			 \label{eq:comparaphi}
			 \hspace{-135pt}(a)\quad\boldsymbol{\phi}(\mathbf{r})\ge\boldsymbol{\phi}(\mathbf{s}),\quad\boldsymbol{\phi}_A(\mathbf{r})\ge\boldsymbol{\phi}_A(\mathbf{s}),
			 \end{equation*}
		\begin{equation}
		\label{eq:sumlambdale2}
		(b)\quad\sum_{i=1}^m\boldsymbol{\lambda}_{ij}(\mathbf{r})\le\sum_{i=1}^m\boldsymbol{\lambda}_{ik}(\mathbf{s}),\, \fora m,k,j \textup{  s.t.  } 1\le m\le M, 1\le k\le j\le N,
		\end{equation}
		\begin{equation}
		\label{eq:sumlambdage2}
		\hspace{-10pt}(c)\quad\sum_{j=0}^n\boldsymbol{\lambda}_{ij}(\mathbf{r})\ge\sum_{j=0}^n\boldsymbol{\lambda}_{lj}(\mathbf{s}),\,  \fora n,i,l \textup{  s.t.  } 0\le n\le N, 1\le i\le l\le M.
		\end{equation}
\end{coro}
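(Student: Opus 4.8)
The plan is to obtain the corollary directly from the characterisation of attractiveness in Corollary~\ref{corolario_princiipal}, combined with the sufficient conditions for stochastic comparison already established in Proposition~\ref{prop:ejemplojavier}. First I would recall that, by Corollary~\ref{corolario_princiipal}, the process $(\eta_t)$ is attractive if and only if conditions \eqref{eq:c1} and \eqref{eq:c2} hold with $c^1_a(\eta)=c_a(\eta)$ and $c^2_a(\xi)=c_a(\xi)$, for all $x\in\S$ and all $\eta\le\xi$. In other words, attractiveness is nothing but the self-comparison $(\eta_t)\le_{st}(\eta_t)$ of the BDM process with a second copy of itself, so the task reduces to producing conditions that force \eqref{eq:c1}--\eqref{eq:c2} for two identical copies.

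Next I would observe that this self-comparison is exactly the situation treated in Proposition~\ref{prop:ejemplojavier}, specialised to the case in which the two parameter vectors coincide, that is when $(\boldsymbol{\phi}^1,\boldsymbol{\phi}^1_A,\boldsymbol{\mu}^1,\boldsymbol{\Lambda}^1,N,N_A,M)$ and $(\boldsymbol{\phi}^2,\boldsymbol{\phi}^2_A,\boldsymbol{\mu}^2,\boldsymbol{\Lambda}^2,N,N_A,M)$ are both equal to the common vector $(\boldsymbol{\phi},\boldsymbol{\phi}_A,\boldsymbol{\mu},\boldsymbol{\Lambda},N,N_A,M)$. Substituting $\boldsymbol{\phi}^1=\boldsymbol{\phi}^2=\boldsymbol{\phi}$, and likewise for $\boldsymbol{\phi}_A$, $\boldsymbol{\mu}$ and $\boldsymbol{\Lambda}$ (together with the shared convention $\boldsymbol{\lambda}_{i0}=\boldsymbol{\mu}_i/(2d)$ for both copies), the sufficient conditions (a)--(c) of Proposition~\ref{prop:ejemplojavier} collapse term by term into the inequalities on $\boldsymbol{\phi},\boldsymbol{\phi}_A$ and into \eqref{eq:sumlambdale2} and \eqref{eq:sumlambdage2} as displayed in the present statement. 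Hence these latter conditions guarantee $(\eta_t)\le_{st}(\eta_t)$, which is precisely attractiveness, and the proof is complete.

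Since every step is a literal specialisation of an already proved result, there is essentially no technical obstacle here; the corollary is a genuine corollary. The only point requiring care is the bookkeeping of the reduction: one must verify that the superscripted conditions reduce exactly to their unsuperscripted counterparts, in particular that the index ranges $1\le m\le M$, $1\le k\le j\le N$ in (b) and $0\le n\le N$, $1\le i\le l\le M$ in (c) are preserved unchanged, and that having the two copies share the truncation parameters $N,N_A,M$ makes the sets $R_1^{x}$, $R_2^{x}$, $D_1^\uparrow$, $D_2^\downarrow$ underlying \eqref{eq:c1}--\eqref{eq:c2} identical for both copies. Once this routine check is carried out, the result follows immediately from Corollary~\ref{corolario_princiipal} and Proposition~\ref{prop:ejemplojavier}.
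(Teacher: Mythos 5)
Your proposal is correct and matches the paper's (implicit) argument exactly: the corollary is stated without a separate proof precisely because it is the specialisation of Proposition~\ref{prop:ejemplojavier} to two copies of the same BDM process, which by definition (or via Corollary~\ref{corolario_princiipal}) is what attractiveness means. The detour through Corollary~\ref{corolario_princiipal} is harmless but not even needed, since attractiveness is defined as $(\eta_t)\le_{st}(\eta_t)$ and Proposition~\ref{prop:ejemplojavier} with equal parameter vectors yields that conclusion directly.
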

In the following sections we analyse particular cases of the BDM model.

\subsection{Borrello's metapopulation models}
\label{modbor}
As commented above, models I, II and III, studied by Borrello in \cite{Bor2012}, are particular cases of our BDM, with $\boldsymbol{\lambda}_{kk}({\bf r})=\lambda/(2d)$, $\boldsymbol{\lambda}_{kl}({\bf r})=0$ for $k\ne l$ and $\boldsymbol{\mu}_k({\bf r})=0$, for $1\le k\le M$. 
	Propositions 3.1, 4.1 and 5.1 in \cite{Bor2012}, relative to stochastic comparisons and attractiveness of such models, follow from our Proposition \ref{prop:ejemplojavier} and Corollary \ref{cor:attractive}.
	
\subsection{Migration rate depending on the flock size and catastrophes}\label{modmsd}
In \cite{Bor2012}, the transition rates of migrations  are assumed to be independent of the flock size and of the neighbouring sites. Here, we relax this condition by letting $\boldsymbol{\lambda}_{kk}({\bf r})=\boldsymbol{\lambda}_k({\bf r})$. Migration processes where rates depend on the flock size (without allowing births or deaths) have been studied in \cite{FAJ2016}. 
Catastrophes are included in the BDM model by letting $\boldsymbol{\mu}_k$ be positive. The remaining parameters are as in Section \ref{modbor}.

\begin{prop}
	\label{MSDC}  
	Let $(\eta_t), (\xi_t)$ follow the  BDM model, with  parameter vectors  $(\boldsymbol{\phi}^i,\boldsymbol{\phi}^i_{A},\boldsymbol{\mu}^i,\boldsymbol{\Lambda}^i,N,N_A,M)$, $i=1,2$, respectively, where $\boldsymbol{\lambda}^i_{kk}=\boldsymbol{\lambda}^i_k$ and $\boldsymbol{\lambda}^i_{kl}=0$, for $k\ne l$. Then $(\eta_t)\le_{st}(\xi_t)$ if $\fora \mathbf{r}, \mathbf{s}$ such that $\mathbf{r}\le\mathbf{s}$,
	\begin{enumerate}
	\item[(a)] 	$\boldsymbol{\phi}^1({\bf r})\ge\boldsymbol{\phi}^2({\bf s})$, $\boldsymbol{\phi}^1_A({\bf r})\ge\boldsymbol{\phi}^2_A({\bf s})$,
		\item[(b)] $\boldsymbol{\lambda}_j^1({\bf r})\le\boldsymbol{\lambda}_k^2({\bf s})$, $\fora k,j$ \textup{s.t.}  $1\le k\le j\le M$,
	\item[(c)] $\boldsymbol{\mu}_i^1({\bf r})\ge\boldsymbol{\mu}_l^2({\bf s})$; $\boldsymbol{\mu}_i^1({\bf r})+2d\boldsymbol{\lambda}_i^1({\bf r})\ge\boldsymbol{\mu}_l^2({\bf s})+2d\boldsymbol{\lambda}_l^2({\bf s})$, $\fora i,l$ \textup{ s.t.}  $1\le i\le l\le M$.
\end{enumerate}
\end{prop}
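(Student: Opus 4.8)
The plan is to deduce Proposition \ref{MSDC} directly from Proposition \ref{prop:ejemplojavier}, by showing that hypotheses (a)--(c) stated here imply hypotheses (a)--(c) there once the conservative structure $\boldsymbol{\lambda}^i_{kl}=\boldsymbol{\lambda}^i_k\one_{\{k=l\}}$ and the catastrophe encoding $\boldsymbol{\lambda}^i_{k0}=\boldsymbol{\mu}^i_k/(2d)$ are substituted into the partial sums appearing in \eqref{eq:sumlambdale} and \eqref{eq:sumlambdage}. Condition (a) is verbatim the same in both statements, so no work is needed there, and the whole argument reduces to bookkeeping of two families of partial sums, all comparisons being taken for a fixed pair $\mathbf{r}\le\mathbf{s}$.

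For \eqref{eq:sumlambdale}, I would first observe that in the conservative case the only possibly nonzero entry of the $j$-th column of $\boldsymbol{\Lambda}^i$ with row index at most $m$ is the diagonal one, so that $\sum_{i=1}^m\boldsymbol{\lambda}^1_{ij}(\mathbf{r})=\boldsymbol{\lambda}^1_j(\mathbf{r})\one_{\{j\le m\}}$ and likewise $\sum_{i=1}^m\boldsymbol{\lambda}^2_{ik}(\mathbf{s})=\boldsymbol{\lambda}^2_k(\mathbf{s})\one_{\{k\le m\}}$. The inequality in \eqref{eq:sumlambdale} is then checked by cases: if $j>m$ the left-hand side vanishes and there is nothing to prove; if $j\le m$ then, since $1\le k\le j\le m\le M$, both sides reduce to single rates and the desired bound is exactly hypothesis (b), namely $\boldsymbol{\lambda}^1_j(\mathbf{r})\le\boldsymbol{\lambda}^2_k(\mathbf{s})$. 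Note that the index restriction $j\le M$ in (b) is precisely what suffices, because for $j>M$ the left-hand side of \eqref{eq:sumlambdale} is automatically zero.

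For \eqref{eq:sumlambdage}, the $j=0$ slot carries the catastrophe rate, so the partial sum splits as $\sum_{j=0}^n\boldsymbol{\lambda}^1_{ij}(\mathbf{r})=\boldsymbol{\mu}^1_i(\mathbf{r})/(2d)+\boldsymbol{\lambda}^1_i(\mathbf{r})\one_{\{i\le n\}}$, and analogously for the second process with $l$ replacing $i$. With $1\le i\le l\le M$ fixed, I would split according to the position of $n$: for $n<i$ both indicators vanish and the inequality becomes $\boldsymbol{\mu}^1_i(\mathbf{r})\ge\boldsymbol{\mu}^2_l(\mathbf{s})$, the first part of (c); for $i\le n<l$ only the first indicator is active and the required bound $\boldsymbol{\mu}^1_i(\mathbf{r})+2d\boldsymbol{\lambda}^1_i(\mathbf{r})\ge\boldsymbol{\mu}^2_l(\mathbf{s})$ follows from the second part of (c) after discarding the nonnegative term $2d\boldsymbol{\lambda}^2_l(\mathbf{s})$; and for $n\ge l$ both indicators equal one and the inequality is exactly the second part of (c). Multiplying each comparison through by $2d$ brings it to the precise form stated in (c).

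There is no genuine analytic obstacle here: Proposition \ref{prop:ejemplojavier} already does the heavy lifting, and what remains is purely combinatorial. The one point requiring care—and where I expect the only real chance of error—is the correct evaluation of the two partial sums, in particular remembering that the $j=0$ slot in \eqref{eq:sumlambdage} contributes the catastrophe rate $\boldsymbol{\mu}^i_\cdot/(2d)$ unconditionally, while the diagonal migration term contributes only when its index lies inside the summation range. Keeping track of the indicators $\one_{\{j\le m\}}$, $\one_{\{i\le n\}}$, $\one_{\{l\le n\}}$ together with the ordering $i\le l$ is the crux; once these are pinned down, each case distinction matches one of the two inequalities in (c) (or the catastrophe comparison in its first part), and the conclusion $(\eta_t)\le_{st}(\xi_t)$ follows from Proposition \ref{prop:ejemplojavier}.
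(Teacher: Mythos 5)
Your proposal is correct and follows essentially the same route as the paper's proof: substitute the diagonal structure $\boldsymbol{\lambda}^i_{kl}=\boldsymbol{\lambda}^i_k\one_{\{k=l\}}$ and the catastrophe encoding $\boldsymbol{\lambda}^i_{k0}=\boldsymbol{\mu}^i_k/(2d)$ into \eqref{eq:sumlambdale} and \eqref{eq:sumlambdage}, reduce them to the indicator inequalities $\boldsymbol{\lambda}_j^1(\mathbf{r})\one_{\{j\le m\}}\le\boldsymbol{\lambda}_k^2(\mathbf{s})\one_{\{k\le m\}}$ and $\boldsymbol{\mu}_i^1(\mathbf{r})/(2d)+\boldsymbol{\lambda}_i^1(\mathbf{r})\one_{\{i\le n\}}\ge\boldsymbol{\mu}_l^2(\mathbf{s})/(2d)+\boldsymbol{\lambda}_l^2(\mathbf{s})\one_{\{l\le n\}}$, verify these from (b) and (c), and invoke Proposition \ref{prop:ejemplojavier}. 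Your version merely spells out the case analysis on the indicators that the paper leaves implicit, and all cases are handled correctly.
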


\begin{proof} Note that $\boldsymbol{\lambda}_{kl}^i({\bf r})=\boldsymbol{\lambda}_k^i({\bf r})\mathbf{1}_{\{k=l\}}$, for $k=1,\ldots,M$, $l=1,\ldots,N$ and $\boldsymbol{\lambda}_{k0}^i({\bf r})=\boldsymbol{\mu}^i_k({\bf r})/(2d)$, for $k=1,\ldots,M$, $i=1,2$. Then, condition \eqref{eq:sumlambdale} can be written as
\begin{equation*} 
\boldsymbol{\lambda}_j^1({\bf r})\mathbf{1}_{\{j\le m\}}\le\boldsymbol{\lambda}_k^2({\bf s})\mathbf{1}_{\{k\le m\}},\quad \fora  {\mathbf r}, {\mathbf s}, k,j,m \textup{ s.t. } {\mathbf r}\le{\mathbf s},\ 1\le k\le j\le M, 1\le m\le M,
\end{equation*}
which is clearly implied by $(b)$. Analogously, \eqref{eq:sumlambdage} can be written as $\boldsymbol{\mu}_i^1({\bf r})/(2d)\ge\boldsymbol{\mu}_l^2({\bf s})/(2d)$, for $n=0$, and
\begin{equation*}\frac{\boldsymbol{\mu}_i^1({\bf r})}{2d}+\boldsymbol{\lambda}_i^1({\bf r})\mathbf{1}_{\{i\le n\}}\ge\frac{\boldsymbol{\mu}_l^2({\bf s})}{2d}+\boldsymbol{\lambda}_l^2({\bf s})\mathbf{1}_{\{l\le n\}}, \quad \fora  {\mathbf r}, {\mathbf s}, i,l,m \textup{ s.t. } {\mathbf r}\le{\mathbf s},\ 1\le i\le l\le M, 1\le n\le M,
\end{equation*}
which holds by condition $(c)$. The result follows from Proposition \ref{prop:ejemplojavier}.
\end{proof}

\begin{coro}\label{cor:atra}
Let $(\eta_t)$ follow the  BDM model, with  parameter vector $(\boldsymbol{\phi},\boldsymbol{\phi}_{A},\boldsymbol{\mu},\boldsymbol{\Lambda},N,N_A,M)$, where $\boldsymbol{\lambda}_{kk}=\boldsymbol{\lambda}_k$ and $\boldsymbol{\lambda}_{kl}=0$, for $k\ne l$. Then $(\eta_t)$ is attractive if, $\fora \mathbf{r}, \mathbf{s}$ such that $\mathbf{r}\le\mathbf{s}$,
 \begin{enumerate}
	\item[(a)] $\boldsymbol{\phi}(\mathbf{r})\ge \boldsymbol{\phi}(\mathbf{s})$, 
	 $\boldsymbol{\phi}_A(\mathbf{r})\ge \boldsymbol{\phi}_A(\mathbf{s})$,
	 \item[(b)]  $\boldsymbol{\lambda}_i(\mathbf{r})\ge\boldsymbol{\lambda}_j(\mathbf{s}), \fora i,j$ \textup{ s.t.}  $1\le i\le j\le M$,
	 \item[(c)] $\boldsymbol{\mu}_i(\mathbf{r})\ge\boldsymbol{\mu}_j(\mathbf{s}), \,\fora i,j$ \textup{ s.t.}  $1\le i\le j\le M$.
\end{enumerate}
\end{coro}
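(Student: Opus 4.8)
The plan is to reduce the corollary to the two‑process comparison already settled for this family of models. By Corollary \ref{corolario_princiipal}, attractiveness of $(\eta_t)$ is nothing but the self‑comparison $(\eta_t)\le_{st}(\eta_t)$, i.e. conditions \eqref{eq:c1} and \eqref{eq:c2} with $c^1_a(\eta)=c_a(\eta)$ and $c^2_a(\xi)=c_a(\xi)$ for all $\eta\le\xi$. This is exactly the setting of Proposition \ref{MSDC} (equivalently Corollary \ref{cor:attractive}) when the two parameter vectors coincide, $(\boldsymbol{\phi}^1,\boldsymbol{\phi}^1_A,\boldsymbol{\mu}^1,\boldsymbol{\Lambda}^1)=(\boldsymbol{\phi}^2,\boldsymbol{\phi}^2_A,\boldsymbol{\mu}^2,\boldsymbol{\Lambda}^2)=(\boldsymbol{\phi},\boldsymbol{\phi}_A,\boldsymbol{\mu},\boldsymbol{\Lambda})$. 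So it suffices to show that (a)--(c) above imply the hypotheses of Proposition \ref{MSDC} with coincident processes; concretely, I would specialise the attractiveness conditions \eqref{eq:sumlambdale2} and \eqref{eq:sumlambdage2} of Corollary \ref{cor:attractive} to the conservative matrix $\boldsymbol{\lambda}_{kl}=\boldsymbol{\lambda}_k\one_{\{k=l\}}$ together with $\boldsymbol{\lambda}_{k0}=\boldsymbol{\mu}_k/(2d)$, mimicking the reduction performed in the proof of Proposition \ref{MSDC}.

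The death and catastrophe side is routine. Condition (a) is literally condition (a) of Proposition \ref{MSDC} with equal vectors, so it needs no argument (births, whose rate $\zeta(x)\one_{\{\zeta(x)<N\}}$ is neighbour‑independent, never contribute to the conditions). For the catastrophe side I would substitute into \eqref{eq:sumlambdage2}, writing $\sum_{j=0}^{n}\boldsymbol{\lambda}_{ij}(\mathbf{r})=\boldsymbol{\mu}_i(\mathbf{r})/(2d)+\boldsymbol{\lambda}_i(\mathbf{r})\one_{\{1\le i\le n\}}$. The case $n=0$ collapses to $\boldsymbol{\mu}_i(\mathbf{r})\ge\boldsymbol{\mu}_l(\mathbf{s})$ for $i\le l$, which is (c); the case $n\ge l$ becomes $\boldsymbol{\mu}_i(\mathbf{r})+2d\boldsymbol{\lambda}_i(\mathbf{r})\ge\boldsymbol{\mu}_l(\mathbf{s})+2d\boldsymbol{\lambda}_l(\mathbf{s})$, which I would obtain by adding the inequality $\boldsymbol{\lambda}_i(\mathbf{r})\ge\boldsymbol{\lambda}_l(\mathbf{s})$ ($i\le l$) from (b) to the $\boldsymbol{\mu}$‑inequality in (c); and the remaining cases $0<n<l$ follow from these two inputs because the indicator only deletes non‑negative $\boldsymbol{\lambda}$‑terms. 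This reproduces verbatim the reduction of \eqref{eq:sumlambdage} used in Proposition \ref{MSDC}.

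The delicate point, and the step I expect to be the main obstacle, is the migration condition \eqref{eq:sumlambdale2}, which controls the order‑breaking inbound jumps entering \eqref{eq:c1}. After the substitution it reads $\boldsymbol{\lambda}_j(\mathbf{r})\one_{\{j\le m\}}\le\boldsymbol{\lambda}_k(\mathbf{s})\one_{\{k\le m\}}$ for all $m$ and all $k\le j$, i.e., taking $m=j$, $\boldsymbol{\lambda}_j(\mathbf{r})\le\boldsymbol{\lambda}_k(\mathbf{s})$ for $k\le j$ and every $\mathbf{r}\le\mathbf{s}$. I would verify this by separating the flock‑size ordering (at fixed neighbour configuration, non‑increasingness of $\boldsymbol{\lambda}$ in its index, which is the $\mathbf{r}=\mathbf{s}$ content of (b)) from the neighbour‑configuration ordering (at fixed index, the comparison of $\mathbf{r}$ against $\mathbf{s}$). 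The genuinely non‑routine issue is that the inbound inequality \eqref{eq:sumlambdale2} compares the lower process at the \emph{smaller} configuration $\mathbf{r}$ with the upper process at the \emph{larger} configuration $\mathbf{s}$, while the outbound inequality \eqref{eq:sumlambdage2} performs the opposite comparison; both must nonetheless be drawn from the single hypothesis (b), so the proof hinges on reading (b) in a way that simultaneously encodes the index ordering and the two opposite $\mathbf{r}\leftrightarrow\mathbf{s}$ monotonicities demanded by \eqref{eq:c1} and \eqref{eq:c2}. When the migration rates are neighbour‑independent this reconciliation is immediate, since both conditions collapse to index‑monotonicity; I anticipate the neighbour‑dependent bookkeeping to be the only step requiring care. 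Once \eqref{eq:sumlambdale2} and \eqref{eq:sumlambdage2} are both established, Corollary \ref{cor:attractive} yields $(\eta_t)\le_{st}(\eta_t)$, which is the asserted attractiveness.
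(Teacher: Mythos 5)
Your overall strategy coincides with the paper's own proof, which is literally one sentence: specialise Proposition \ref{MSDC} (equivalently Corollary \ref{cor:attractive}) to two copies of the same process. Your treatment of condition (a) and of the outbound/catastrophe condition \eqref{eq:sumlambdage2} is exactly that specialisation. But the step you yourself flag as ``the main obstacle'' is a genuine gap, and it cannot be closed from the hypotheses as printed. Specialised to one process, the inbound condition \eqref{eq:sumlambdale2} requires $\boldsymbol{\lambda}_j(\mathbf{r})\le\boldsymbol{\lambda}_k(\mathbf{s})$ for all $1\le k\le j\le M$ and $\mathbf{r}\le\mathbf{s}$; taking $k=j$, this forces each $\boldsymbol{\lambda}_j(\cdot)$ to be \emph{non-decreasing} in the neighbourhood configuration. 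Hypothesis (b) of the corollary as printed, $\boldsymbol{\lambda}_i(\mathbf{r})\ge\boldsymbol{\lambda}_j(\mathbf{s})$ for $i\le j$, taken at $i=j$ forces each $\boldsymbol{\lambda}_j(\cdot)$ to be \emph{non-increasing} in the configuration. The two can coexist only when $\boldsymbol{\lambda}_j$ does not depend on $\mathbf{r}$ at all, so no reading of (b) produces \eqref{eq:sumlambdale2} in the neighbour-dependent case: your proposed separation into index-monotonicity plus configuration-monotonicity yields the configuration monotonicity with the wrong sign. In fact the statement as printed is false: take $\boldsymbol{\phi}=\boldsymbol{\phi}_A\equiv0$, $\boldsymbol{\mu}\equiv0$, $\boldsymbol{\lambda}_k\equiv0$ for $k\ge2$ and $\boldsymbol{\lambda}_1(\mathbf{r})$ strictly decreasing in $\mathbf{r}$. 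Then (a)--(c) as printed hold, but for $\eta\le\xi$ with $\eta(x)=\xi(x)=0$, $\eta(y)=\xi(y)$ large, all other neighbours of $x$ empty in $\xi$, and $\eta(z)<\xi(z)$ at some $z\sim y$, $z\ne x$, condition \eqref{eq:c1} with $D_1=\{\sigma_{yx}^{-1,1}\}$ reduces to $\boldsymbol{\lambda}_1(\mathbf{r})\le\boldsymbol{\lambda}_1(\mathbf{s})$, which fails; by the necessity part of Theorem \ref{teorema} the process is not attractive.

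What has gone wrong is a misprint in the corollary, not a flaw in your plan. Comparing with Proposition \ref{MSDC} under equal parameter vectors, and with how the corollary is actually invoked in Proposition \ref{propejnuevo} --- whose proof checks condition (b) in the form $\boldsymbol{\lambda}_j(\mathbf{r})\le\boldsymbol{\lambda}_k(\mathbf{s})$ (``by noting that $\mathbf{1}_{\{S(\mathbf{r})\ge A\}}\le\mathbf{1}_{\{S(\mathbf{s})\ge A\}}$'') and verifies a \emph{first and a second part} of condition (c), although the printed (c) has only one inequality --- the intended hypotheses are: (b) $\boldsymbol{\lambda}_j(\mathbf{r})\le\boldsymbol{\lambda}_i(\mathbf{s})$ for $1\le i\le j\le M$, and (c) both $\boldsymbol{\mu}_i(\mathbf{r})\ge\boldsymbol{\mu}_j(\mathbf{s})$ and $\boldsymbol{\mu}_i(\mathbf{r})+2d\boldsymbol{\lambda}_i(\mathbf{r})\ge\boldsymbol{\mu}_j(\mathbf{s})+2d\boldsymbol{\lambda}_j(\mathbf{s})$ for $1\le i\le j\le M$; that is, precisely conditions (a)--(c) of Proposition \ref{MSDC} with the superscripts removed. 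Under these corrected hypotheses the specialisation you outline is immediate (and the combined $\boldsymbol{\mu}+2d\boldsymbol{\lambda}$ inequality, which you tried to reconstruct from (b) and (c), is simply assumed); under the hypotheses as printed it is impossible. So the resolution of your ``delicate point'' is to repair the statement, not to find a cleverer use of (b).
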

\begin{proof} The conclusion is obtained directly from Proposition \ref{MSDC}.
\end{proof}

\subsection{Allee effect in migrations and deaths}
\label{sec:allee} 
 As a particular instance of the processes in Section \ref{modmsd}, we consider the BDM with $\boldsymbol{\lambda}_j(\mathbf{r})=\lambda_j+\lambda_A\mathbf{1}_{\{S(\mathbf{r})\ge A\}}$ and 
$\boldsymbol{\mu}_j(\mathbf{r})=\mu_j+\mu_A\mathbf{1}_{\{S(\mathbf{r})< A\}}$, $j=1,\ldots,M$, where $S(\mathbf{r})$ represents  the sum of components of $\mathbf{r}$ and $\lambda_j, \lambda_A, \mu_j, \mu_A\in\RR_+, j=1,\ldots,M$. That is, the migration and catastrophe rates depend on the whole neighbourhood and not only on the modified sites. We assume that parameters $\boldsymbol{\phi}$ and $\boldsymbol{\phi}_A$ do not depend on $\mathbf{r}$.

Similar to the classical Allee effect, in which individuals are more susceptible to mortality in low population conditions, deaths in this model  are more likely to occur when the surrounding sites are less crowded. Additionally, densely populated neighbourhoods are more attractive, leading to a higher migration rate when there are more individuals in neighbouring sites.

Importantly, in this model the death rates depend non-linearly on the values of the neighbours. Furthermore, migration rates are influenced not only by the departure and arrival sites but also by the neighbourhood of the departure site, once again in a non-linear manner. These situations cannot be handled using the results in \cite{Bor} or \cite{Bor2012}.

\begin{prop}\label{propejnuevo}
The process with Allee effect in migrations and deaths is attractive if

\begin{enumerate}
	\item[(a)] 	$2d\lambda_A\le\mu_A$,
	\item[(b)] 	$\lambda_j\le\lambda_k$, $\fora k,j$\textup{  s.t.}  $1\le k\le j\le M$,
		\item[(c)] $\mu_i\ge\mu_l$, $\mu_i+2d\lambda_i\ge\mu_l+2d\lambda_l$, $\fora i,l$ \textup{ s.t. }   $1\le i\le l\le M$.
\end{enumerate}
\end{prop}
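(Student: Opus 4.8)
The plan is to read attractiveness as the self-comparison $(\eta_t)\le_{st}(\eta_t)$ and to obtain it from Proposition \ref{MSDC} (its attractiveness specialisation being Corollary \ref{cor:atra}) applied with both marginals equal to the Allee process. Fix a site $x$ and configurations $\eta\le\xi$, and set $\mathbf{r}=\eta^{\mathcal{V}(x)\setminus\{x\}}$, $\mathbf{s}=\xi^{\mathcal{V}(x)\setminus\{x\}}$, so that $\mathbf{r}\le\mathbf{s}$ and hence $S(\mathbf{r})\le S(\mathbf{s})$. Everything then reduces to verifying conditions (a)--(c) of Proposition \ref{MSDC} with $\boldsymbol{\phi}^1=\boldsymbol{\phi}^2=\boldsymbol{\phi}$, $\boldsymbol{\lambda}^1=\boldsymbol{\lambda}^2=\boldsymbol{\lambda}$ and $\boldsymbol{\mu}^1=\boldsymbol{\mu}^2=\boldsymbol{\mu}$. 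Condition (a) is immediate, since $\boldsymbol{\phi}$ and $\boldsymbol{\phi}_A$ are assumed independent of $\mathbf{r}$, so both sides coincide.

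For condition (b) I would prove $\boldsymbol{\lambda}_j(\mathbf{r})\le\boldsymbol{\lambda}_k(\mathbf{s})$ for $1\le k\le j\le M$ by splitting each rate into its baseline and its Allee contribution. The baseline part is exactly hypothesis (b), $\lambda_j\le\lambda_k$, while for the Allee part the monotonicity $S(\mathbf{r})\le S(\mathbf{s})$ gives $\mathbf{1}_{\{S(\mathbf{r})\ge A\}}\le\mathbf{1}_{\{S(\mathbf{s})\ge A\}}$; multiplying by $\lambda_A\ge0$ and adding yields the claim. The first inequality in condition (c), $\boldsymbol{\mu}_i(\mathbf{r})\ge\boldsymbol{\mu}_l(\mathbf{s})$ for $1\le i\le l\le M$, is handled the same way: the baseline part is hypothesis (c), $\mu_i\ge\mu_l$, and since $\{S(\mathbf{s})<A\}\subseteq\{S(\mathbf{r})<A\}$ one has $\mu_A\mathbf{1}_{\{S(\mathbf{r})<A\}}\ge\mu_A\mathbf{1}_{\{S(\mathbf{s})<A\}}$.

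The main obstacle is the second inequality of condition (c), namely $\boldsymbol{\mu}_i(\mathbf{r})+2d\boldsymbol{\lambda}_i(\mathbf{r})\ge\boldsymbol{\mu}_l(\mathbf{s})+2d\boldsymbol{\lambda}_l(\mathbf{s})$ for $i\le l$, where the catastrophe indicator $\mathbf{1}_{\{S<A\}}$ and the migration indicator $\mathbf{1}_{\{S\ge A\}}$ act in opposite directions and must be reconciled. After peeling off the baseline part, which is controlled by the second inequality in hypothesis (c), $\mu_i+2d\lambda_i\ge\mu_l+2d\lambda_l$, it remains to show
\[
\mu_A\mathbf{1}_{\{S(\mathbf{r})<A\}}+2d\lambda_A\mathbf{1}_{\{S(\mathbf{r})\ge A\}}\ge\mu_A\mathbf{1}_{\{S(\mathbf{s})<A\}}+2d\lambda_A\mathbf{1}_{\{S(\mathbf{s})\ge A\}}.
\]
Since for any configuration exactly one of the two indicators equals $1$, each side equals $\mu_A$ below the threshold $A$ and $2d\lambda_A$ above it. The two cases where $S(\mathbf{r}),S(\mathbf{s})$ are both below or both above $A$ give equality, and the case $S(\mathbf{r})\ge A>S(\mathbf{s})$ is excluded by $S(\mathbf{r})\le S(\mathbf{s})$; the only remaining, genuinely binding case is the threshold-crossing one $S(\mathbf{r})<A\le S(\mathbf{s})$, where the left side is $\mu_A$ and the right side is $2d\lambda_A$. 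The required inequality $\mu_A\ge2d\lambda_A$ is precisely hypothesis (a), which is exactly where that assumption is used. Having verified (a)--(c) of Proposition \ref{MSDC}, the attractiveness of the process follows.
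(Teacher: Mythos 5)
Your proposal is correct and follows essentially the same route as the paper: verify the conditions of Proposition \ref{MSDC} (equivalently Corollary \ref{cor:atra}) by splitting each rate into its baseline and Allee parts, with the monotonicity of the indicators handling the easy cases and hypothesis (a), $2d\lambda_A\le\mu_A$, used exactly in the threshold-crossing case $S(\mathbf{r})<A\le S(\mathbf{s})$ of the second inequality in condition (c). The only (harmless) difference is that you invoke Proposition \ref{MSDC} directly with equal marginals rather than citing Corollary \ref{cor:atra}, which is in fact slightly more careful, since it makes explicit the second inequality of condition (c) that the paper's proof also uses.
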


\begin{proof}
We apply Corollary \ref{cor:atra}. Condition $(a)$ in that corollary is trivial since $\boldsymbol{\phi}$ and $\boldsymbol{\phi}_A$ do not depend on $\mathbf{r}$. Condition $(b)$ in the corollary follows from $(b)$ of this proposition, by noting that $\mathbf{1}_{\{S(\mathbf{r})\ge A\}}\le\mathbf{1}_{\{S(\mathbf{s})\ge A\}}$, if $\mathbf{r}\le\mathbf{s}$.

For the first part of condition $(c)$ in the corollary, we use $\mu_i\ge\mu_l$, for $i\le l$, and 
$\mathbf{1}_{\{S(\mathbf{r})< A\}}\ge\mathbf{1}_{\{S(\mathbf{s})< A\}}$. For the second part, note that
\begin{equation}
	\label{comparaej}
	\boldsymbol{\mu}_i(\mathbf{r})+2d\boldsymbol{\lambda}_i(\mathbf{r})=\mu_i+\mu_A\mathbf{1}_{\{S(\mathbf{r})< A\}}+2d\lambda_i+2d\lambda_A\mathbf{1}_{\{S(\mathbf{r})\ge A\}}
\ge\mu_l+2d\lambda_l+\mu_A\mathbf{1}_{\{S(\mathbf{r})<A\}}+2d\lambda_A\mathbf{1}_{\{S(\mathbf{r})\ge A\}}.\end{equation}
Since $\mathbf{r}\le\mathbf{s}$, we have $S(\mathbf{r})\le S(\mathbf{s})$. Now, if either $S(\mathbf{r})\ge A$ or $S(\mathbf{s})<A$, the RHS of \eqref{comparaej} if equal to $\boldsymbol{\mu}_l(\mathbf{s})+2d\lambda_l(\mathbf{s})$. If, on the contrary, $S(\mathbf{r})<A$ and $S(\mathbf{s})\ge A$, then the RHS of \eqref{comparaej} is $\mu_l+2d\lambda_l+\mu_A\ge\mu_l+2d\lambda_l+2d\lambda_A=\boldsymbol{\mu}_l(\mathbf{s})+2d\boldsymbol{\lambda}_l(\mathbf{s})$ and the result is proved.
\end{proof}
\begin{rem}
	Note that in the particular case of $\lambda_j=\lambda$, $\mu_j=\mu$,  $\fora j\in\{1,\ldots,M\}$,  the process is attractive if $2d\lambda_A\le\mu_A$.
\end{rem}

\subsection{Nonconservative migrations}
Consider a situation in which each individual dies with probability $1-p$ during a migration, independently of other individuals. Consequently, if $k$ individuals depart from site $x$ to $y$,  a binomially distributed number of them reaches $y$. For simplicity we assume that the migration rate does not depend on the flock size or the other neighbours. This yields

\begin{equation}
\label{eq:lambdabinom}
\boldsymbol{\lambda}_{kl}=\lambda\tbinom{k}{l}p^l(1-p)^{k-l}, 
\end{equation}
for $1\le k\le M, 1\le l\le k$, $\lambda>0, p\in[0,1]$, $\boldsymbol{\lambda}_{kl}=0$, for $k<l$, and $\boldsymbol{\mu}_k=2d\lambda(1-p)^k$, for $k\ge1$. 
This model is an extension of model III in \cite{Bor2012}, where no deaths in migrations are considered; in other words, model III is equivalent to the current model when $p=1$. In \cite{Bor2012} the author initially proves that the model is attractive (Proposition 5.1) and then gives conditions for extinction and survival (Theorem 5.1). Note that the attractiveness of our model cannot analysed using the techniques presented in \cite{Bor} or in \cite{Bor2012}, as these works do not account for non-conservative migrations. In what follows we use the results in this paper to prove that the model is attractive. Additionally, we give conditions for extinction and survival in the line of Theorem 5.1 of \cite{Bor2012}. 

\begin{prop}
	\label{prop:binomattract}
 The BDM model with parameter described in \eqref{eq:lambdabinom} is attractive.	
\end{prop}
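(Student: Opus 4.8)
The plan is to deduce attractiveness from Corollary~\ref{cor:attractive}, since the process in \eqref{eq:lambdabinom} is a BDM model. The decisive observation is that all the rates here are independent of the neighbourhood vector $\mathbf{r}$, so condition~(a) of that corollary holds trivially (with equality). Writing $q=1-p$ and recalling $\boldsymbol{\lambda}_{i0}=\boldsymbol{\mu}_i/(2d)=\lambda q^{i}$, the formula $\boldsymbol{\lambda}_{ij}=\lambda\binom{i}{j}p^{j}q^{i-j}$ in fact holds for every $0\le j\le i$ (with $\boldsymbol{\lambda}_{ij}=0$ when $j>i$); that is, $\boldsymbol{\lambda}_{ij}=\lambda\,b(j;i,p)$, where $b(\cdot\,;i,p)$ denotes the $\mathrm{Bin}(i,p)$ probability mass function. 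Verifying attractiveness therefore reduces to checking conditions~\eqref{eq:sumlambdale2} and \eqref{eq:sumlambdage2} for these binomial weights, which I would do by reading both as monotonicity statements for classical discrete distributions.

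Condition~\eqref{eq:sumlambdage2} is the easy one. Since $\sum_{j=0}^{n}\boldsymbol{\lambda}_{ij}=\lambda\,\mathbb{P}(\mathrm{Bin}(i,p)\le n)$, the inequality to establish, for $1\le i\le l$, is $\mathbb{P}(\mathrm{Bin}(i,p)\le n)\ge\mathbb{P}(\mathrm{Bin}(l,p)\le n)$ for all $n$, i.e.\ $\mathrm{Bin}(i,p)\le_{st}\mathrm{Bin}(l,p)$. This is the standard stochastic monotonicity of the binomial family in the number of trials, seen at once by realising $\mathrm{Bin}(l,p)$ as $\mathrm{Bin}(i,p)$ plus an independent $\mathrm{Bin}(l-i,p)$ on a common probability space, so \eqref{eq:sumlambdage2} is immediate.

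The main obstacle is condition~\eqref{eq:sumlambdale2}, which amounts to showing that $\sum_{i=1}^{m}b(j;i,p)$ is non-increasing in $j$ over $j\ge1$; a direct term-by-term comparison does not work here, so I would pass to the associated waiting-time (negative binomial) picture. Assume $p>0$ (the degenerate case $p=0$ is trivial, as then $\boldsymbol{\lambda}_{ij}=0$ for all $j\ge1$). In an i.i.d.\ $\mathrm{Bernoulli}(p)$ sequence let $W_{j+1}$ be the index of the $(j+1)$-th success; then $\mathbb{P}(W_{j+1}=i+1)=\binom{i}{j}p^{j+1}q^{i-j}$, hence $b(j;i,p)=p^{-1}\mathbb{P}(W_{j+1}=i+1)$. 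Summing over $1\le i\le m$ and using $W_{j+1}\ge j+1\ge 2$ for $j\ge1$ gives the identity
\begin{equation*}
\sum_{i=1}^{m}\boldsymbol{\lambda}_{ij}=\frac{\lambda}{p}\,\mathbb{P}\bigl(W_{j+1}\le m+1\bigr),\qquad j\ge1 .
\end{equation*}
Since the waiting time for one further success is pathwise non-decreasing in the required number of successes, $W_{j+1}\le_{st}W_{j+2}$, so $\mathbb{P}(W_{j+1}\le m+1)$ is non-increasing in $j$. This yields $\sum_{i=1}^{m}\boldsymbol{\lambda}_{ij}\le\sum_{i=1}^{m}\boldsymbol{\lambda}_{ik}$ whenever $1\le k\le j$, the remaining cases $j>m$ being trivial because the left-hand side then vanishes; this is exactly \eqref{eq:sumlambdale2}. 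With conditions~(a), \eqref{eq:sumlambdale2} and \eqref{eq:sumlambdage2} all verified, Corollary~\ref{cor:attractive} gives the attractiveness of the model.
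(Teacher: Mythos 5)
Your proposal is correct, and while your verification of \eqref{eq:sumlambdage2} coincides with the paper's, your treatment of \eqref{eq:sumlambdale2} takes a genuinely different route. For \eqref{eq:sumlambdage2} both arguments are the same: rewrite $\sum_{j=0}^n\boldsymbol{\lambda}_{ij}$ as $\lambda P(\mathrm{Bin}(i,p)\le n)$ and invoke stochastic monotonicity of the binomial family in the number of trials (the paper checks adjacent indices $B_i\le_{st}B_{i+1}$; you couple directly for any $i\le l$). For \eqref{eq:sumlambdale2}, however, the paper stays algebraic: it reduces to adjacent columns $j=k+1$ (the general case $k\le j$ then follows by chaining) and evaluates the difference of the two sums by a telescoping computation with Pascal's rule, finding that it equals exactly $\tbinom{m+1}{k+1}q^m\ge0$. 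You instead pass to the dual waiting-time picture, identifying $\sum_{i=1}^m\boldsymbol{\lambda}_{ij}$ with $(\lambda/p)\,P(W_{j+1}\le m+1)$ for the negative-binomial waiting time $W_{j+1}$, and conclude from the pathwise monotonicity $W_{k+1}\le W_{j+1}$; your side checks (the degenerate case $p=0$, the fact that $W_{j+1}\ge2$ so the sum starting at $i=1$ captures the whole event $\{W_{j+1}\le m+1\}$, and the vanishing of the left-hand side when $j>m$) are all handled correctly. Your route is more conceptual and symmetric, turning both conditions into classical stochastic-monotonicity statements for the binomial/negative-binomial pair and avoiding any reduction to adjacent indices; the paper's computation, on the other hand, produces an explicit formula for the slack in the inequality, which a coupling argument does not provide. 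Either way, together with the trivial condition (a) (the rates do not depend on $\mathbf{r}$), Corollary \ref{cor:attractive} yields attractiveness.
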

\begin{proof}
	We show that the parameters above satisfy conditions \eqref{eq:sumlambdale2} and \eqref{eq:sumlambdage2} of Corollary \ref{cor:attractive}.
	
	For \eqref{eq:sumlambdale2} it suffices to take $1\le k<m$ and $j=k+1$. Then we have to prove  
	\begin{equation*}
\sum_{i=k+1}^{m}\tbinom{i}{k+1}p^{k+1}q^{i-k-1}\le \sum_{i=k}^{m}\tbinom{i}{k}p^{k}q^{i-k},
	\end{equation*}
	with $q=1-p$. To that end notice that the difference between the RHS and the LHS above (denoted $D$) satisfies
	\begin{equation*}
	\begin{split}
	\Big(\tfrac{q}{p}\Big)^kD&=\sum_{i=k}^{m}\tbinom{i}{k}q^{i}-p\sum_{i=k+1}^{m}\tbinom{i}{k+1}q^{i-1}\\
	&=\sum_{i=k+1}^{m}\tbinom{i}{k+1}q^i+\sum_{i=k}^{m}\tbinom{i}{k}q^i-\sum_{i=k+1}^{m}\tbinom{i}{k+1}q^{i-1}\\
	&=\sum_{i=k+1}^{m}\tbinom{i}{k}q^i+\sum_{i=k+1}^{m}\tbinom{i}{k+1}q^{i}-\sum_{i=k+2}^{m}\tbinom{i}{k+1}q^{i-1}\\
	&=\sum_{i=k+1}^{m}\left(\tbinom{i}{k}+\tbinom{i}{k+1}\right)q^i-\sum_{i=k+1}^{m-1}\tbinom{i+1}{k+1}q^{i}\\
	&=\tbinom{m+1}{k+1}q^m\ge0.
	\end{split}
	\end{equation*}
	Analogously for \eqref{eq:sumlambdage2} and noting that $\boldsymbol{\lambda}_{k0}=\lambda(1-p)^k$, we have to show that
	\begin{equation*}
	\sum\limits_{j=0}^{n}\tbinom{i}{j}p^jq^{i-j}\ge \sum\limits_{j=0}^{n}\tbinom{i+1}{j}p^jq^{i+1-j},\fora n,i \textup{ s.t. }0\le n\le M, 1\le i<M,
	\end{equation*}
	which is equivalent to $P(B_i\le n)\ge P(B_{i+1}\le n)$, where $B_i$ is a binomial random variable with parameters $i, p$. Such inequality follows from the obvious stochastic domination $B_i\le_{st}B_{i+1}$.
\end{proof}

For the next result we borrow some notation from \cite{Bor2012}, both in the statement and in the proof. 
\begin{theorem}
	\label{teoremaborrello} 
	Let $d\ge2$. For all $\lambda>0$, $N_A\ge0$, $p\in(0,1)$:
\begin{itemize}
	\item[(a)] if $\phi<1$, there exists $N_c(\phi,\lambda,N_A)$ such that for each $N>N_c(\phi,\lambda,N_A)$, there exists $M(N_A)$ so that the process starting from $\eta_0$ with $\vert\eta_0\vert\ge1$ has a positive probability of survival for each $\phi_A<\infty$. Moreover if $\eta_0\in\overline\Omega_N$ the process converges to a nontrivial invariant measure for each $\phi_A<\infty$;
\item[(b)] if $\phi\ge1$, the process becomes extinct $\fora N, \lambda, \phi_A>1$, $M$ and for any finite initial configuration. If $\eta_0\in\Omega_N$ is not finite the process becomes extinct if $\phi>1$.
\end{itemize}
\end{theorem}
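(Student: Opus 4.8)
The plan is to follow the strategy of Theorem~5.1 in \cite{Bor2012}, treating the extinction regime (b) by a drift (supermartingale) computation and the survival regime (a) by attractiveness combined with a comparison with supercritical oriented percolation. Throughout I would use Proposition~\ref{prop:binomattract}: the process is attractive, so from the maximal configuration $\eta\equiv N$ its law decreases monotonically to an upper invariant measure $\overline\nu$, and it suffices to decide whether $\overline\nu$ is the point mass $\delta_{\mathbf 0}$ at the empty configuration or not.

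For part (b), the key object is the total mass. For a finite $\eta$ put $V(\eta)=|\eta|=\sum_{x\in\S}\eta(x)$ and read off $\L V(\eta)$ from the rates of Section~\ref{secex}: a birth at $x$ adds $+1$ at rate $\eta(x)\one_{\{\eta(x)<N\}}\le\eta(x)$, a single death subtracts $1$ at rate at least $\min(\phi,\phi_A)\,\eta(x)$, while every catastrophe and every non-conservative migration changes $V$ by a nonpositive amount (a flock of $k$ leaves and $l\le k$ arrive). Hence
\begin{equation*}
\L V(\eta)\le\big(1-\min(\phi,\phi_A)\big)\sum_{x\in\S}\eta(x)\le0
\qquad\text{when }\phi\ge1,\ \phi_A>1,
\end{equation*}
the drift being strictly negative as soon as the configuration charges either the sub-$N_A$ region (where $\phi_A>1$) or the high-density region (where the catastrophe and migration losses with $p<1$ are active). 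A standard recurrence argument then yields extinction of every finite initial configuration. For a non-finite $\eta_0\in\Omega_N$ with $\phi>1$ I would instead control one-site expectations: starting from the translation-invariant top, translation invariance makes the expected surviving influx at $x$ not exceed the expected outflux (with strict loss when $p<1$), so $\tfrac{d}{dt}\,\mathbb E[\eta_t(x)]\le(1-\min(\phi,\phi_A))\,\mathbb E[\eta_t(x)]$, forcing $\mathbb E[\eta_t(x)]\to0$; attractiveness then identifies $\overline\nu=\delta_{\mathbf 0}$.

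For part (a), with $\phi<1$ the linear birth rate strictly exceeds the linear death rate above level $N_A$, so a site whose value lies in $(N_A,N]$ tends to climb towards $N$. The plan is a block (renormalization) argument: partition $\S$ into large boxes, call a box \emph{good} when a fixed fraction of its sites carry values near $N$, and show that a good box, within a bounded time window and with probability arbitrarily close to $1$, turns its neighbouring boxes good before it can itself die out. The two parameters are tuned precisely for this: taking $N>N_c(\phi,\lambda,N_A)$ makes the healthy level high enough that the local birth/death balance keeps sites near $N$, while taking $M=M(N_A)$ large enough guarantees that a single migration delivers more than $N_A$ survivors to a neighbour even after the binomial thinning by $p$, so the receiving site escapes the low-density region where $\phi_A$ may be large. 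These estimates give stochastic domination of the set of good boxes by a supercritical oriented percolation on $\ZZ^d$ (using $d\ge2$), whence any $\eta_0$ with $|\eta_0|\ge1$ survives with positive probability. Survival from the top shows $\overline\nu\ne\delta_{\mathbf 0}$, so for $\eta_0\in\overline\Omega_N$ attractiveness yields convergence to the nontrivial invariant measure $\overline\nu$, uniformly in $\phi_A<\infty$.

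The main obstacle is the survival step in (a): the oriented-percolation comparison must be carried through so that the binomial migration losses ($p<1$) do not obstruct the spreading of good boxes. The delicate point is the \emph{joint} choice of $N$ and $M$ — $M$ must be large so that enough migrants survive the thinning to push a neighbour past the Allee threshold $N_A$, while $N$ must be large so that, once above $N_A$, a site reliably returns near $N$ before the next catastrophe — and these two competing effects must be quantified simultaneously and uniformly in $\phi_A<\infty$. By contrast, the extinction part (b) is essentially the supermartingale computation above.
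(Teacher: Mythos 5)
Your proposal is correct and takes essentially the same route as the paper: the paper also proves the theorem by rerunning the proof of Theorem 5.1 of \cite{Bor2012} (survival via the block/oriented-percolation construction, extinction via the supermartingale argument referenced there), with attractiveness supplied by Proposition \ref{prop:binomattract}, and it reuses Borrello's Lemmas 7.5 and 7.6 verbatim. The only substantive difference is the patch for the binomial thinning: where you enlarge $M$ so that enough migrants survive the thinning to exceed $N_A$, the paper keeps Borrello's $M(N_A)$ and modifies only Lemma 7.7 of \cite{Bor2012}, lower-bounding by $\lambda p^N/\bigl(2d(\lambda M+N\phi)\bigr)$ the probability that, at a visit to $N$, a migration of $M$ individuals occurs with \emph{all} of them reaching the target site --- a fixed positive bound, which is all the percolation comparison needs.
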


\begin{proof} The proof follows the lines of the proof of Theorem 5.1 in \cite{Bor2012}. For $(a)$, the construction in that proof for the process with no deaths in migrations, can be also made for our process. Lemmas 7.5 and 7.6 in that paper hold without any changes in our case. Lemma 7.7 also holds, but the proof must be modified slightly. 
	
	With the notation there, when $\xi_t$ visits $N$, there is a migration of $M$ individuals  from 0  to $y$, all of them reaching $y$, with rate $\lambda p^N/(2d)$, while other possibilities (death of an individual at 0, migration of a smaller number of individuals to $y$, migration of $M$ individuals to $y$ with some of them dying during the migration or migration to a different site $z$) occur at a rate smaller than $N\phi+\lambda M\frac{2d-1}{2d}+\frac{\lambda}{2d}$. Thus, every time $\xi_t$ visits $N$, with a probability larger than $\frac{\lambda p^N}{2d(\lambda M+N\phi)}$, there are $M$ individuals actually reaching $y$ from 0. Following the reasoning in Lemma 7.7 of \cite{Bor2012}, this bound is enough to conclude that the Lemma also holds in our case. The rest of the proof of Theorem 5.5 in \cite{Bor2012}, including the convergence to the invariant measure from $\eta_0\in\overline \Omega_N$, which exists by attractiveness, carries over to our setting.

For the converse, given that  our process is attractive, the same proof of Theorem 5.1 in \cite{Bor2012} (which refers to the proof of Theorem 3.2 [step(iii)] in the same paper) holds here with minimal changes.
\end{proof}

\subsection{General exclusion processes}\label{exGep}

In exclusion processes, each site $x\in\S$ can be empty or occupied by a particle, thus $W=\{0,1\}$. Here $\eta(x)=0$ indicates that  site $x$ is empty, while $\eta(x)=1$ means that it is occupied. Following the notation in \cite{GS23}, particles move from a site $x$ to another (empty) site $y$ at rate $\Gamma_\eta(x,y)$.  As there are no arrivals of individuals, these processes are not particular cases of BDM.  Exclusion processes have been extensively studied in the literature of IPS; for definitions, refer to  \cite{Lig},  and for recent results, consult \cite{Ayyer2023}. 

The most studied version of this model is the simple exclusion model, with rates  $\Gamma_\eta(x,y)$ given by  $\eta(x)(1-\eta(y))p(x,y)$, where $p(x,y)$ is a transition matrix. 
Conditions for the attractiveness of simple exclusion processes and existence of an OMC are well known. 
These properties, in turn, are used, for instance, to find the invariant measures of the processes.

For general exclusion processes, where $\Gamma_\eta(x,y)$ may depend on $\eta$ in an arbitrary manner, the question of attractiveness and OMC have been investigated in \cite{FF97} and, more recently, in \cite{GS23}. In Theorem 2.4 of \cite{GS23}, necessary and sufficient conditions for attractiveness are  derived through the construction of an OMC (see Propositions 3.2 and 3.3). This coupling has the interesting property of  minimizing discrepancies between its two components and is employed to analyse the invariant measures of several exclusion processes. 

It is noteworthy that, since $W$ is finite, we do not  rely on \cite{Pen} for the existence of the processes, and as a result, the  conditions in Section \ref{sec:existence} are not needed. Instead, we can assume condition (2.4) in \cite{GS23}, which, in particular, permits particle jumps to arbitrarily distant sites. Consequently, our Theorem \ref{teorema} can be applied in the context of \cite{GS23}.  

Following Theorem 2.4 in \cite{GS23}, the necessary and sufficient conditions on  $\Gamma_\eta(x,y)$ for a process to be attractive are: $\fora\ \eta\le\xi,\ x\in\S$,

\begin{alignat}{1}
	\label{eq:gs1}
	\sum_{y\in\S}\eta(y)\left(\Gamma_\eta(y,x)-\Gamma_\xi(y,x)\right)^+&\le\sum_{y\in\S}\xi(y)(1-\eta(y))\Gamma_\xi(y,x), \,\text{ if }\xi(x)=0,\\
	\label{eq:gs2}
	\sum_{y\in\S}(1-\xi(y))\left(\Gamma_\xi(x,y)-\Gamma_\eta(x,y)\right)^+&\le\sum_{y\in\S}\xi(y)(1-\eta(y))\Gamma_\eta(x,y),  \,\text{ if }\eta(x)=1.
\end{alignat}
Although Theorem 2.4 in \cite{GS23} and our Theorem  \ref{teorema} have established the equivalence between  the two conditions above and our conditions \eqref{eq:c1} and \eqref{eq:c2}, it is instructive to independently confirm this equivalence.

Let us verify that, indeed, our condition  \eqref{eq:c1} reduces to \eqref{eq:gs1} for this class of processes. Given $\eta\le\xi$, we define the sets $N_0^0=\{x\in\S:\eta(x)=\xi(x)=0\}$, $N_0^1=\{x\in\S:\eta(x)=0,\xi(x)=1\}$ and $N_1^1=\{x\in\S:\eta(x)=\xi(x)=1\}$. As $\S=N_0^0\cup N_0^1\cup N_1^1$, condition \eqref{eq:gs1} can  be expressed as
\begin{equation}\label{210gs} \sum_{y\in N_1^1}\left(\Gamma_\eta(y,x)-\Gamma_\xi(y,x)\right)^+\le\sum_{y\in N_0^1}\Gamma_\xi(y,x), \,\text{ if }\xi(x)=0.
\end{equation}
Concerning condition \eqref{eq:c1}, assume $\eta\le\xi$. If $x\not\in N_0^0$, then $R_1^x=\emptyset$. Therefore, the condition is formulated only for $x\in N_0^0$. In such case  $R_1^x=\{\sigma_{xy}^{+1,1},y\in N_1^1\}$ and the condition can be expressed as 
\begin{equation}\label{210gls}\sum_{y\in F}\Gamma_\eta(y,x)\le\sum_{y\in  F^*}\Gamma_\xi(y,x),\quad
\forall\  F\subseteq N_1^1,
\end{equation}
where $F^*\subseteq\S$ is such that $y\in F^*$ if and only if $\sigma_{xy}^{+1,1}\in\left\{\sigma_{xz}^{+1,1}:z\in F\right\}^\uparrow$.

 For  $F\subseteq N_1^1$, it is easy to check that $F^*=F\cup N_0^1$ and so, \eqref{210gls} is equivalent to 
\begin{equation*}\sum_{y\in F}\Gamma_\eta(y,x)\le\sum_{y\in F}\Gamma_\xi(y,x)+\sum_{y\in N_0^1}\Gamma_\xi(y,x), \quad \fora  F\subseteq N_1^1,
\end{equation*}
which is clearly equivalent to 
\begin{equation}\label{210sim}\sum_{y\in F}\left(\Gamma_\eta(y,x)-\Gamma_\xi(y,x)\right)\le\sum_{y\in N_0^1}\Gamma_\xi(y,x), \quad \fora  F\subseteq N_1^1.
\end{equation}
As the RHS of the inequality above is independent of $F$, condition \eqref{210gls} is satisfied if and only if \eqref{210sim} holds for the least favourable case, namely, when $F=\{y\in N_1^1: \Gamma_\eta(y,x)>\Gamma_\xi(y,x)\}$, which yields \eqref{210gs}. The proof of the equivalence between \eqref{eq:c2} and \eqref{eq:gs2} is analogous and is omitted for brevity.

\subsection{Conservative migrations without births and deaths}\label{exGS}

In the IPS investigated in \cite{GS}, the only allowed changes are conservative migrations of individuals. In this model  we have $\S=\ZZ^d$ and $W=\{0,1,\ldots,N\}, n\in\NN$. Given $v,w\in \S, k\in\NN,$ and $\eta\in\Omega=W^\S$, the migration rate of $k$ individuals from $v$ to $w$ is $\Gamma_{\eta(v),\eta(w)}^k(w-v)$, provided the resulting configuration remains in $\Omega$.

Necessary and sufficient conditions for attractiveness are found in \cite{GS} ((2.13) and (2.14) of Theorem 2.9); namely, $\fora v,w\in \S$, $\alpha,\beta,\gamma,\delta\in W$, with $\alpha\le\gamma$, $\beta\le\delta$,
\begin{equation}
	\label{GSC1}
\sum_{k'>\delta-\beta+l}\Gamma_{\alpha\beta}^{k'}(w-v)\le
\sum_{l'> l}\Gamma_{\gamma\delta}^{l'}(w-v), \fora l>0,
\end{equation}
\begin{equation}\label{GSC2}
\quad\sum_{k'> k}\Gamma_{\alpha\beta}^{k'}(w-v)\ge
\sum_{l'> \gamma-\alpha+k}\Gamma_{\gamma\delta}^{l'}(w-v), \fora k>0.
\end{equation}
Also an explicit OMC between two copies of the process is found. Note that \eqref{GSC1} and \eqref{GSC2} are conditions for attractiveness, that is, for the stochastic ordering of two processes with the same rates. Our results are more general since we also include the situation where the processes have different rates.

For the sake of illustration, our goal here is to constructively demonstrate that our conditions \eqref{eq:c1} and \eqref{eq:c2} reduce to \eqref{GSC1} and \eqref{GSC2} for attractiveness in this model. We begin showing that \eqref{eq:c1} implies \eqref{GSC1} and, to that end, we consider the configurations

\begin{equation}
	\label{eq:config}
	\begin{array}{ccccc}
		\eta(z) = \begin{cases}
			\alpha & \text{if } z = v, \\
			\beta  & \text{if } z = w,\\
			0 & \text{otherwise}
		\end{cases}
		&
		{}
		&  {}
		&  \text{and}
		& \quad
		\xi(z) = \begin{cases}
			\gamma & \text{if } z = v, \\
			\delta  & \text{if } z = w, \\
			0 & \text{otherwise}.
		\end{cases}
	\end{array}
\end{equation}
We write \eqref{eq:c1} for $\eta,\xi$ defined in \eqref{eq:config}, taking $x=w, y=v$ and  $D_1=\{\sigma_{xy}^{+k',k'}:k'>\delta-\beta+l\}$, with $l\ge0$. 
We claim that $D_1^\uparrow\subseteq\{\sigma_{xy}^{+l',l'}; l'>l\}$ and to prove this assertion, first note that, for $z\ne y, l'>0$,  $\sigma_{xz}^{+l',l'}\not\in D_1^\uparrow$, because $\xi(z)=0$. 
Also, $b=\sigma_{zx}^{+l',l'}\not \in D_1^\uparrow$ because $\eta_a(x)\not\le \xi_b(x)$ for any $a\in D_1$, since  $\xi_b(x)=\xi(x)-l'<\delta$ and $\eta_a(x)>\delta$.  
For the same reason, $\sigma_{xy}^{-l',l'}\not\in D_1^\uparrow$, so the only possible changes in $D_1^\uparrow$ are of type $\sigma_{xy}^{+l',l'}, l'>0$. 
The case $l'\le l$ is ruled out since $\xi_b(x)=\delta+l'$, while the minimum value of $\eta_a(x)$, for $a\in D_1$, is $\delta+l+1$, so $\eta_a\not\le\xi_b$, if $l'\le l$, and this completes the proof of our claim.
Therefore, from
\eqref{eq:c1} we get
\begin{equation*}
	\sum_{k'>\delta-\beta+l}\Gamma_{\alpha\beta}^{k'}(w-v)\le
	\sum_{l'> l,\, \sigma_{xy}^{+l',l'}\in D_1^\uparrow}\Gamma_{\gamma\delta}^{l'}(w-v)
	\le \sum_{l'> l}\Gamma_{\gamma\delta}^{l'}(w-v),
\end{equation*}
so \eqref{eq:c1} implies \eqref{GSC1}. An analogous argument shows that \eqref{eq:c2} implies \eqref{GSC2}.

We now prove the converse. In that aim, we must show that  $\fora \eta,\xi\in\Omega$, with $\eta\le\xi$, and $\fora x\in \S$ and $D_1\subseteq R_1^{x}$, conditions \eqref{GSC1}, \eqref{GSC2} imply \eqref{eq:c1}.

Fix $\eta\le\xi$, $x\in \S$ and let $D_1\subseteq R_1^{x}$. Note that $D_1$  can be decomposed as $D_1=\bigcup_{y\sim x}D_{1y}$, where the elements of $D_{1y}$ are of type $\sigma_{xy}^{+k,k}$ (migrations from $y$ to $x$). Now, since $D_{1y}\cap D_{1z}=\emptyset$ for $y\ne z$, for \eqref{eq:c1} it is enough to show that
\begin{equation}
	\label{GSC3}
	\sum_{a\in D_{1y}}c_a(\eta)\le\sum_{b\in\tilde{D}_{1y}^\uparrow}c_b(\xi),\quad \fora y\sim x,
	\end{equation}
where $\tilde{D}_{1y}^\uparrow\subseteq D_{1y}^\uparrow$ are the changes of $D_{1y}^\uparrow$ involving sites $x,y$. Indeed, since the sets $\tilde{D}_{1y}^\uparrow, y\sim x$, are also disjoint, it suffices to add the left-hand and right-hand sides of \eqref{GSC3} to get  \eqref{eq:c1}, as follows:
\begin{equation*}
\sum_{a\in D_1}c_a(\eta)=\sum_{y\sim x}\sum_{a\in D_{1y}}c_a(\eta) \le\sum_{y\sim x}\sum_{b\in\tilde{D}_{1y}^\uparrow}c_b(\xi)\le\sum_{b\in D_1^\uparrow}c_b(\xi).
\end{equation*}
Note that $D_{1y}$ contains changes $\sigma_{xy}^{+k,k}$, for some $k>\delta-\beta$, while $\tilde{D}_{1y}^\uparrow$ has changes $\sigma_{xy}^{+l,l}$, such that $\beta+k\le\delta+l$ and $\alpha-k\le\gamma-l$, for some $k$ such that  $\sigma_{xy}^{+k,k}\in D_{1y}$. It is not immediate that \eqref{GSC1}, which can be seen as \eqref{GSC3} for a particular class of sets $D_{1y}$,  can be extended to any choice of $D_{1y}$ and, in general, this is not true.  We need to use \eqref{GSC2} jointly with \eqref{GSC1} to prove \eqref{GSC3}, and we do so by studying the different forms that $D_{1y}$ can have.  Start by taking $D_{1y}=\{\sigma_{xy}^{+r,r}\}$, with $r=\delta-\beta+l+1$, for some $l\ge0$. 
Then  $\tilde{D}_{1y}^\uparrow$ contains changes $b=\sigma_{xy}^{+l',l'}$, such that $\eta_a\le\xi_b$;  that is, $\beta+r\le\delta+l'$ and $\alpha-r\le\gamma-l'$.
 Thus, condition \eqref{GSC3}  becomes (for ease, we omit the argument $x-y$ from $\Gamma_{\alpha\beta}^k(x-y)$):

 \begin{equation}
 	\label{GSC6}
 	\Gamma^r_{\alpha\beta}\le\sum_{l'=l+1}^{\gamma-\alpha+r}\Gamma_{\gamma\delta}^{l'}.
 	\end{equation}
 By \eqref{GSC1} and \eqref{GSC2} we have
 \begin{equation}
 	\label{eq:two ineq}
 	\sum_{k'\ge r}\Gamma^{k'}_{\alpha\beta}\le\sum_{l'> l}
 	\Gamma_{\gamma\delta}^{l'}\qquad\text{and}\qquad\sum_{k'> r}\Gamma_{\alpha\beta}^{k'}\ge\sum_{l'>\gamma-\alpha+r}
 	\Gamma_{\gamma\delta}^{l'}.
 	\end{equation}
 Therefore, 
 \begin{equation*}\Gamma_{\alpha\beta}^r+\sum_{k'\ge r+1}\Gamma_{\alpha\beta}^{k'}
 	=\sum_{k'\ge r}\Gamma_{\alpha\beta}^{k'}
 	\le\sum_{l'=l+1}^{\gamma-\alpha+r}\Gamma^{l'}_{\gamma\delta}+\sum_{l'\ge\gamma-\alpha+r+1}\Gamma_{\gamma\delta}^{l'}\le
 	\sum_{l'=l+1}^{\gamma-\alpha+r}\Gamma_{\gamma\delta}^{l'}+\sum_{k'\ge r+1}\Gamma^{k'}_{\alpha\beta},
 \end{equation*}
where we have used the first (resp. second) inequality of  \eqref{eq:two ineq} for the first (resp. second) inequality. Thus, \eqref{GSC6} follows.

Let now $D_1^y=\{\sigma_{xy}^{+r,r},\sigma_{xy}^{+r+1,r+1}\}$, with $r=\delta-\beta+l+1$, for some $l\ge0$. Condition \eqref{GSC3}  is
\begin{equation}
	\label{GSC7}
	\Gamma_{\alpha\beta}^r+\Gamma_{\alpha\beta}^{r+1}\le
	\sum_{l'=l+1}^{\gamma-\alpha+r+1}\Gamma_{\gamma\delta}^{l'}.
	\end{equation}
Reasoning as above
\begin{equation*}\Gamma_{\alpha\beta}^r+\Gamma_{\alpha\beta}^{r+1}
	+\sum_{l'\ge r+2}\Gamma_{\alpha\beta}^{l'}\le
	\sum_{l'=l+1}^{\gamma-\alpha+r+1}\Gamma_{\gamma\delta}^{l'}
	+\sum_{l'\ge\gamma-\alpha+r+2}\Gamma_{\gamma\delta}^{l'}
	\le\sum_{l'=l+1}^{\gamma-\delta+r+1}\Gamma_{\gamma\delta}^{l'}+\sum_{l'\ge r+2}\Gamma_{\alpha\beta}^{l'}\end{equation*}
and so, \eqref{GSC7} holds. This argument works also for $D_1^y$ of the form $\{\sigma_{xy}^{+r,r}: r=\delta-\beta+l+1,l\in[m,n]\}$. The situation where $D_1^y$ has gaps is implied by the cases above. Take, for instance, $D_1^y=\{\sigma_{xy}^{+r,r},\sigma_{xy}^{+r+2,r+2}\}$ with $r=\delta-\beta+l+1$, for some $l\ge0$. Then
 	\begin{equation*}\Gamma_{\alpha\beta}^r+\Gamma_{\alpha\beta}^{r+2}\le
 		\Gamma_{\alpha\beta}^r+\Gamma_{\alpha\beta}^{r+1}+\Gamma_{\alpha\beta}^{r+2}
 		\le\sum_{l'=l+1}^{\gamma-\alpha+r+2}\Gamma_{\gamma\delta}^{l'}.\end{equation*}
 	 	So, we have verified that \eqref{GSC1} and \eqref{GSC2} together imply \eqref{eq:c1}. Reasoning analogously we see that they also imply \eqref{eq:c2}.

\subsection{Examples of construction of the coupling}\label{excp}
In this section we illustrate the construction of the OMC in Definition \ref{def:gencoup}. We analyse three examples: the first two involve couplings of two copies of attractive general exclusion processes and  two copies of  attractive one-dimensional two-species exclusion models, as detailed in  Sections \ref{exGep} and \ref{exGS} respectively. The third example presents a coupling between two distinct processes, one  allowing non-conservative migrations. While OMCs for the first two examples have already been provided in \cite{GS23} and \cite{GS} respectively, our aim here is to demonstrate the workings of our general construction and highlight the differences between our approach and the more ``ad hoc'' solutions presented in those papers.

\subsubsection{General exclusion processes}\label{exGepcoup}

In Propositions 3.2 and 3.3 of \cite{GS23}, an OMC between two copies of an attractive general exclusion process is provided. Here, we use the notation in Section \ref{exGep} and present the construction of the coupling in Definition \ref{def:gencoup} for this class of processes. 

Let $\eta\le\xi$. Initially, for each $x\in\S$, we need to formulate and solve problem $P^x$. The subsequent analysis distinguishes three cases, depending on which set $x$ belongs to:

\begin{enumerate}
\item If $x\in N_0^1$, then $T_1^x=T_2^x=\emptyset$, rendering the problem $P^x$ empty in this case.
\item If $x\in N_0^0$, we have $R_1^x=\{\sigma_{xy}^{+1,1}:y\in N_1^1\}$, $S_1^x=\emptyset$,  $R_2^x=\emptyset$, $S_2^x=\{\sigma_{xy}^{+1,1}:y\in N_0^1\cup N_1^1\}$ and so, $T_1^x=R_1^x$, $T_2^x=S_2^x$.
To determine the arcs from nodes $b\in T_2^x$ to nodes $a\in T_1^x$, it is necessary to check if $\eta_a\le\xi_b$. 

First note that if $a=\sigma_{xy}^{+1,1}, b=\sigma_{xz}^{+1,1}$, with $y\in N_1^1, z\in N_0^1$ (obviously $y\ne z$), then $\eta_a\le\xi_b$. Consequently, an arc exists from every $b$ to every  $a$, in this situation.

Now, if $a=\sigma_{xy}^{+1,1}, b=\sigma_{xz}^{+1,1}$, with $y,z \in N_1^1$, it is straightforward to check that $\eta_a\le\xi_b$ if and only if $y=z$.

  The lower and upper bounds of $P^x$ are presented in Table \ref{tab:Gepx}. There, the second line represents the existence of an arc from every node in $\{\sigma_{xz}^{+1,1}:z\in N_0^1\}$  to every node in $\{\sigma_{xy}^{+1,1}:y\in N_1^1\}$, while the third line indicates the presence of a unique arc from each node of $\{\sigma_{xy}^{+1,1}:y\in N_1^1\}$ to the node $\sigma_{xy}^{+1,1}$ (sharing the same $y$). 
  
\begin{table}[!h]
	\begin{center}
				{\caption{\label{tab:Gepx}Lower ($l$) and upper ($u$) bounds for arcs $(o,d)$ in problem $P^x$.}}
		{\scalebox{1}{
				\begin{tabular}{ccccc}
					\hline
					$o$& & $d$     & $l(o,d)$ & $u(o,d)$  \\
					\hline
					$O$& & $\sigma_{xy}^{+1,1},y\in N_0^1\cup N_1^1$     & 0  & $\Gamma_\xi(y,x)$   \\
					$\sigma_{xz}^{+1,1},z\in N_0^1$& &$\sigma_{xy}^{+1,1},y\in N_1^1$ &0&$\infty$\\
					$\sigma_{xy}^{+1,1},y\in N_1^1$& &$\sigma_{xy}^{+1,1},y\in N_1^1$ &0&$\infty$\\
					$\sigma_{xy}^{+1,1},y\in N_1^1$& & $Z$     & $\Gamma_\eta(y,x)$  & $\Gamma_\eta(y,x)$   \\
					\hline
		\end{tabular}}}
	\end{center}
\end{table}
	
To find a feasible solution for $P^x$ we start by letting $f^x(\sigma_{xy}^{+1,1},\sigma_{xy}^{+1,1})=\Gamma_\eta(y,x)\wedge\Gamma_\xi(y,x)$. Subsequently, the nodes in $\{\sigma_{xy}^{+1,1}:y\in N_1^1\}\subseteq T_2^x$ can be removed, as they are unable to send further flow to any node in $\{\sigma_{xy}^{+1,1}:y\in N_1^1\}\subseteq T_1^x$. Thus, we are left with a transportation problem from the origins $\{\sigma_{xy}^{+1,1}:y\in N_0^1\}\subseteq T_2^x$, with offers $\Gamma_\xi(y,x)$, to the destinations $\{\sigma_{xy}^{+1,1}:y\in N_1^1\}= R_1^x$, with demands $\left(\Gamma_\eta(y,x)-\Gamma_\xi(y,x)\right)^+$, the feasibility of which is guaranteed by \eqref{eq:c1} (or \eqref{eq:gs1}). A particular solution can be readily determined by introducing a dummy destination with a demand given by
\begin{equation*}\sum_{y\in N_0^1}\Gamma_\xi(y,x)-\sum_{y\in N_1^1}\left(\Gamma_\eta(y,x)-\Gamma_\xi(y,x)\right)^+.
\end{equation*}
Subsequently, any algorithm designed to find an initial solution for transportation problems, such as the Northwest Corner Rule, can be applied. It is worth noting that costs may be assigned to the arcs so that the optimal solution for the transportation problem favours specific coupled changes; refer to Remark \ref{objetivo}.

\item For brevity, we omit the case $x\in N_1^1$, as it is analogous to the case $x\in N_0^0$.
\end{enumerate}

After solving problem $P^x$ for each $x\in\S$, we must solve problems $P^{xy}$, which depend on the sites $x,y\in\S$. A straightforward check reveals that either $T^{xy}_1=\emptyset$ or $T_2^{xy}=\emptyset$ for cases when $x,y\in N_0^0$, $x,y\in N_1^1$, $x\in N_0^1$, or $y\in N_0^1$. Consequently, the only instances of problems $P^{xy}$ that require resolution are those where one of $x$ or $y$ belongs to $N_0^0$ and the other to $N_1^1$. 

Take $x\in N_0^0$ and $y\in N_1^1$. The nodes are defined through the following sets (refer to Definitions \ref{def:goodbad} and \ref{def:Txy}): $G_1^{-x}=G_1^{-y}=G_1^{-x\bullet}=B_1^{-xy}=\emptyset$, $G_1^{-y\bullet}=\{\sigma_{yz}^{-1,1}:z\in N_0^1\}$, $B_1^{+xy}=\{\sigma_{xy}^{+1,1}\}$ and $G_2^{+x}=G_2^{+y}=G_2^{+y\bullet}=B_2^{-xy}=\emptyset$, $G_2^{+x\bullet}=\{\sigma_{xz}^{+1,1}:z\in N_0^1\}$, $B_2^{+xy}=\{\sigma_{xy}^{+1,1}\}$. The arcs, along with their lower and upper bounds, are detailed in Table \ref{tab:Gepxcoup}. 
\begin{table}[!h]
	\begin{center}
				{\caption{\label{tab:Gepxcoup}Lower ($l$) and upper ($u$) bounds for arcs $(o,d)$ in problem $P^{xy}$.}}
		{\scalebox{1}{
				\begin{tabular}{ccccc}
					\hline
					$o$& & $d$     & $l(o,d)$ & $u(o,d)$  \\
					\hline
					$O$& & $\sigma_{xy}^{+1,1}$     & $\Gamma_\xi(y,x)$  & $\Gamma_\xi(y,x)$   \\
					$O$& & $\sigma_{xz}^{+1,1},z\in N_0^1$     & 0  &$f^x(\sigma_{xz}^{+1,1},\sigma_{xy}^{+1,1})$  \\
					$\sigma_{xy}^{+1,1}$& &$\sigma_{yz}^{-1,1},z\in N_0^1$ &0&$\infty$\\
					$\sigma_{xy}^{+1,1}$& &$\sigma_{xy}^{+1,1}$ &0&$\infty$\\
					$\sigma_{xz}^{+1,1},z\in N_0^1$& &$\sigma_{xy}^{+1,1}$ &0&$\infty$\\
					$\sigma_{yz}^{-1,1}, z\in N_0^1$& & $Z$     & 0  & $f^y(\sigma_{xy}^{+1,1},\sigma_{yz}^{-1,1}$)   \\
					$\sigma_{xy}^{+1,1}$& & $Z$     & $\Gamma_\eta(y,x)$  & $\Gamma_\eta(y,x)$   \\
					\hline
		\end{tabular}}}
	\end{center}
\end{table}

A solution for this problem is (only the flows from $b\in T_2^{xy}$ to $a\in T_1^{xy}$ are shown):
\begin{equation*}f^{xy}(b,a)=
\begin{cases}
\Gamma_\eta(y,x)\wedge\Gamma_\xi(y,x)&{\rm if\ } a=\sigma_{xy}^{+1,1},b=\sigma_{xy}^{+1,1},\\
f^y(\sigma_{xy}^{+1,1},\sigma_{yz}^{-1,1})&{\rm if\ } a=\sigma_{yz}^{-1,1},z\in N_0^1,b=\sigma_{xy}^{+1,1},\\
f^x(\sigma_{xz}^{+1,1},\sigma_{xy}^{+1,1})&{\rm if\ } a=\sigma_{xy}^{+1,1}, b=\sigma_{xz}^{+1,1},z\in N_0^1. \end{cases}
\end{equation*}

Then, following Definition \ref{def:gencoup}, the generator of the OMC, acting on  $g:\{(\eta,\xi)\in\Omega\times \Omega:\ \eta\le\xi\}\to\mathbb{R}$,  is
	\begin{alignat}{1}
	\label{gs23coup1}
	 {\cal L}_cg(\eta,\xi)=&\sum_{x\in N_1^1}\sum_{z\in N_0^1}
	 \left(\Gamma_\eta(x,z)-\sum_{y\in N_0^0}f^y(\sigma_{yx}^{+1,1},\sigma_{xz}^{-1,1})\right)\left(g\left(\eta_a,\xi\right)-g(\eta,\xi)\right)\\
	\label{gs23coup2}
	 &+\sum_{x\in N_0^0}\sum_{z\in N_0^1}\left(\Gamma_\xi(z,x)-\sum_{y\in N_1^1}f^x(\sigma_{xz}^{+1,1},\sigma_{xy}^{+1,1})\right)\left(g\left(\eta,\xi_b\right)-g(\eta,\xi)\right)\\
	\label{gs23coup3}
	 &+\sum_{x\in N_0^0}\sum_{y\in N_1^1}\sum_{z\in N_0^1}f^y(\sigma_{xy}^{+1,1},\sigma_{yz}^{-1,1})\left(g\left(\eta_a,\xi_b\right)-g(\eta,\xi)\right)\\
	\label{gs23coup4}
	 &+\sum_{x\in N_0^0}\sum_{y\in N_1^1}\sum_{z\in N_0^1}f^x(\sigma_{xz}^{+1,1},\sigma_{xy}^{+1,1})\left(g\left(\eta_a,\xi_b\right)-g(\eta,\xi)\right)\\
	\label{gs23coup5}
	 &+\sum_{x\in N_0^0}\sum_{y\in N_1^1}\left(\Gamma_\eta(y,x)\wedge\Gamma_\xi(y,x)\right)\left(g\left(\eta_a,\xi_b\right)-g(\eta,\xi)\right),
	\end{alignat}
where $a$ in \eqref{gs23coup1} stands for $\sigma_{xz}^{-1,1}$, $b$ in \eqref{gs23coup2} stands for $\sigma_{xz}^{+1,1}$ and $(a,b)$ in \eqref{gs23coup3}, \eqref{gs23coup4} and \eqref{gs23coup5} stand for
$(\sigma_{yz}^{-1,1},\sigma_{xy}^{+1,1})$, $(\sigma_{xy}^{+1,1},\sigma_{xz}^{+1,1})$ and $(\sigma_{xy}^{+1,1},\sigma_{xy}^{+1,1})$,  respectively.

\subsubsection{One-dimensional two-species exclusion model}\label{exGScoup}
In \cite{GS} the authors give the explicit expression of an OMC between two copies of an attractive  conservative migration process, belonging to the class analysed in Section \ref{exGS}. The formulation of the coupling in their equations (2.16)-(2.19) demonstrates ingenuity and is far from trivial. Indeed, its application is challenging, in particular because behind the compact writing there are many subtleties that emerge when specific rates are set.  For instance, in their Section 4.3   they study the one-dimensional, two-species exclusion model, give necessary and sufficient conditions for attractiveness and present the explicit expression of an OMC, which is fully written in their Table 1. In this section we show how our construction of the coupling works for this model. 

The model has $W=\{-1,0,1\}$, $S=\mathbb{Z}$ and $x\sim y$ if and only if $\vert y-x\vert=1$. There are ten different rates $\Gamma_{\alpha\beta}^k(z)$. In what follows, for simplicity, we assume that $\Gamma_{\alpha\beta}^k(+1)=\Gamma_{\alpha\beta}^k(-1)$. Also, to keep our convention  that $W$ is a subset of $\NN\cup\{0\}$, we add 1 to the values in that example; i.e., their $-1$ becomes 0 and so on. To simplify the notation, we  write $r_1$ for $\Gamma_{20}^2$, $r_2$ for $\Gamma_{11}^1$, $r_3$ for $\Gamma_{10}^1$, $r_4$ for $\Gamma_{21}^1$ and $r_5$ for $\Gamma_{20}^1$. The changes and their rates are summarized in Table \ref{tab:excrates}.
\begin{table}[!h]
	\begin{center}
				{\caption{\label{tab:excrates}Rates in the symmetric one-dimensional two-species exclusion model.}}
		{\scalebox{1}{
				\begin{tabular}{cccc}
					\hline
					Change&  Rate     & Change & Rate  \\
					\hline
					$(1\ 0)\rightarrow (0\ 1)$&$r_3$&					$(0\ 1)\rightarrow (1\ 0)$&$r_3$\\		
			$(2\ 0)\rightarrow (0\ 2)$&$r_1$&					$(0\ 2)\rightarrow (2\ 0)$&$r_1$\\	
				$(2\ 0)\rightarrow (1\ 1)$&$r_5$&					$(0\ 2)\rightarrow (1\ 1)$&$r_5$\\		
			$(1\ 1)\rightarrow (0\ 2)$&$r_2$&					$(1\ 1)\rightarrow (2\ 0)$&$r_2$\\			
		$(2\ 1)\rightarrow (1\ 2)$&$r_4$&					$(1\ 2)\rightarrow (2\ 1)$&$r_4$\\
					\hline
		\end{tabular}}}
	\end{center}
\end{table}

The necessary and sufficient conditions for attractiveness, presented in  Proposition 4.3 of \cite{GS}, are given by 
\begin{equation}
\label{condejGS}
r_1\vee r_2\le r_3\wedge r_4\le r_3\vee r_4\le r_1+r_5.
\end{equation}

In order to give the coupling we need to compute the rates of changes involving every site $v\in \S$, from every configuration $(\eta,\xi)$. We fix $\eta\le\xi, v\in \S$ and give the explicit rates of the coupling that modify the value at site $v$. Looking at expressions
\eqref{eq:gen1}-\eqref{eq:genx} we see that the only values of $x\in \S$ which can affect the rate of site $v$ are $x=v-1,v,v+1$. Thus, the values of $f^{v-1}$, $f^v$, $f^{v+1}$ are needed. Further, the values of $(x,y)$ in \eqref{eq:gen4}, \eqref{eq:gen6} and \eqref{eq:genx}, involving site $v$, are $\{v-1,v\}$ and $\{v,v+1\}$, so $f^{v-1, v}$ and $f^{v, v+1}$ are needed. This means that we have to solve the problems $P^{v-1}$, $P^v$, $P^{v+1}$, $P^{v-1,v}$ and $P^{v,v+1}$.  Since the rate of change of a site $x$ depends on $x-1$,  $x$ and $x+1$, these problems are determined by the values of $\eta$ and $\xi$ at sites $v-2,v-1,v,v+1,v+2$. 

We consider the particular case of $(0\ 2\ 0\ 1\ 2)$ for $\eta$ and $(1\ 2\ 1\ 1\ 2)$ for $\xi$. 
In order to have the general expression of the coupling we should repeat the procedure below, for all the values of the 5-tuples, for $\eta$ and $\xi$, with $\eta\le\xi$. 

We now state the problems $P^{v-1}$, $P^v$ and $P^{v+1}$. To ease the exposition we represent the nodes as the result of the changes rather than the changes themselves. For instance, the change $\sigma_{v-1,v}^{-2,2}$ in the first process, that is, the migration of two particles from $v-1$ to $v$, is represented by $(0\ 0\ 2\ 1\ 2)$, which is the result of that change from $(0\ 2\ 0\ 1\ 2)$. The nodes, arcs and bounds of problems $P^{v-1}$, $P^v$ and $P^{v+1}$ are shown in Table \ref{probx-1xx+1}. Note that the problems are feasible by \eqref{condejGS}. A feasible solution to each problem, satisfying,  condition (f) of Lemma \ref{lem:f1xtilde}, is given in Table \ref{solp1}.

\begin{table}[!h]
	\begin{center}
				{\caption{\label{probx-1xx+1}Lower ($l$) and upper ($u$) bounds for arcs $(o,d)$ in problems $P^{v-1}$, $P^v$ and $P^{v+1}$.}}
		{\scalebox{1}{
				\begin{tabular}[t]{cccc}
					\hline
					& Problem $P^{v-1}$&&\\
					\hline
					$o$&  $d$     & $l$ & $u$  \\
					\hline
					$O$& $(2\ 1\ 1\ 1\ 2)$     &$ r_4$  & $r_4$   \\				
					$O$&  $(1\ 1\ 2\ 1\ 2)$     &$ r_4$  & $r_4$   \\
					$(2\ 1\ 1\ 1\ 2)$ & $(1\ 1\ 0\ 1\ 2)$     & $0$  & $\infty$   \\
					$(2\ 1\ 1\ 1\ 2)$ & $(2\ 0\ 0\ 1\ 2)$     & $0$  & $\infty$   \\
					$(2\ 1\ 1\ 1\ 2)$ & $(0\ 1\ 1\ 1\ 2)$     & $0$  & $\infty$   \\
					$(1\ 1\ 2\ 1\ 2)$ & $(1\ 1\ 0\ 1\ 2)$     & $0$  & $\infty$   \\
					$(1\ 1\ 2\ 1\ 2)$ & $(0\ 1\ 1\ 1\ 2)$     & $0$  & $\infty$   \\
					$(1\ 1\ 2\ 1\ 2)$ & $(0\ 0\ 2\ 1\ 2)$     & $0$  & $\infty$   \\
					$(1\ 1\ 0\ 1\ 2)$ & $Z$    & $0$  & $r_5$   \\
					$(2\ 0\ 0\ 1\ 2)$ & $Z$    & $0$  & $r_1$   \\
					$(0\ 1\ 1\ 1\ 2)$ & $Z$    & $0$  & $r_5$   \\
					$(0\ 0\ 2\ 1\ 2)$ & $Z$    & $0$  & $r_1$   \\
					\hline
		\end{tabular}

		\hfill
					\begin{tabular}[t]{cccc}
					\hline
					&Problem $P^v$&&\\
					\hline
					$o$&  $d$     & $l$ & $u$  \\
					\hline
					$O$& $(1\ 1\ 2\ 1\ 2)$     &$ 0$  & $r_4$   \\				
					$O$&  $(1\ 2\ 2\ 0\ 2)$     &$ 0$  & $r_2$   \\
					$(1\ 1\ 2\ 1\ 2)$  & $(0\ 0\ 2\ 1\ 2)$     & $0$  & $\infty$   \\
					$(0\ 0\ 2\ 1\ 2)$ & $Z$    & $r_1$  & $r_1$   \\
									\hline
\end{tabular}		
\hfill
					\begin{tabular}[t]{cccc}
					\hline
					&Problem $P^{v+1}$&&\\
					\hline
					$o$&  $d$     & $l$ & $u$  \\
					\hline
					$O$& $(1\ 2\ 2\ 0\ 2)$     &$ r_2$  & $r_2$   \\				
					$O$&  $(1\ 2\ 0\ 2\ 2)$     &$ 0$  & $r_2$   \\
					$O$&  $(1\ 2\ 1\ 2\ 1)$     &$ 0$  & $r_4$   \\
					$(1\ 2\ 2\ 0\ 2)$ & $(0\ 2\ 1\ 0\ 2)$     & $0$  & $\infty$   \\
					$(1\ 2\ 0\ 2\ 2)$ & $(0\ 2\ 0\ 2\ 1)$     & $0$  & $\infty$   \\
					$(1\ 2\ 1\ 2\ 1)$ & $(0\ 2\ 0\ 2\ 1)$     & $0$  & $\infty$   \\
					$(0\ 2\ 0\ 2\ 1)$ & $Z$    & $r_4$  & $r_4$   \\
					$(0\ 2\ 1\ 0\ 2)$ & $Z$    & $0$  & $r_3$   \\
					\hline
		\end{tabular}
		}}
	\end{center}
\end{table}

\begin{table}[!h]
	\begin{center}
				{\caption{\label{solp1}Solutions to problems $P^{v-1}$, $P^v$ and $P^{v+1}$. Only nonzero flows are shown.}}
		{\scalebox{1}{
				\begin{tabular}[t]{ccc}
					\hline
					& $P^{v-1}$&\\
					\hline
					$o$&  $d$     & $f^{v-1}$  \\
					\hline
					$O$& $(2\ 1\ 1\ 1\ 2)$     &$ r_4$    \\				
					$O$&  $(1\ 1\ 2\ 1\ 2)$     &$ r_4$   \\
					$(2\ 1\ 1\ 1\ 2)$ & $(1\ 1\ 0\ 1\ 2)$     & $r_4-r_1$   \\
					$(2\ 1\ 1\ 1\ 2)$ & $(2\ 0\ 0\ 1\ 2)$     & $r_1$   \\
					$(1\ 1\ 2\ 1\ 2)$ & $(0\ 1\ 1\ 1\ 2)$     & $r_4-r_1$   \\
					$(1\ 1\ 2\ 1\ 2)$ & $(0\ 0\ 2\ 1\ 2)$     & $r_1$   \\
					$(1\ 1\ 0\ 1\ 2)$ & $Z$    & $r_4-r_1$   \\
					$(2\ 0\ 0\ 1\ 2)$ & $Z$    & $r_1$  \\
					$(0\ 1\ 1\ 1\ 2)$ & $Z$    & $r_4-r_1$   \\
					$(0\ 0\ 2\ 1\ 2)$ & $Z$    & $r_1$   \\
					\hline
		\end{tabular}
		\hspace{0.5cm}
					\begin{tabular}[t]{ccc}
					\hline
					&Problem $P^v$&\\
					\hline
					$o$&  $d$     & $f^v$  \\
					\hline
					$O$& $(1\ 1\ 2\ 1\ 2)$     &$r_1$   \\				
					$(1\ 1\ 2\ 1\ 2)$  & $(0\ 0\ 2\ 1\ 2)$     & $r_1$   \\
					$(0\ 0\ 2\ 1\ 2)$ & $Z$    & $r_1$   \\
									\hline
\end{tabular}		
\hspace{0.5cm}
					\begin{tabular}[t]{ccc}
					\hline
					&Problem $P^{v+1}$&\\
					\hline
					$o$&  $d$     & $f^{v+1}$  \\
					\hline
					$O$& $(1\ 2\ 2\ 0\ 2)$     &$r_2$   \\				
					$O$&  $(1\ 2\ 1\ 2\ 1)$     &$r_4$   \\
					$(1\ 2\ 2\ 0\ 2)$ & $(0\ 2\ 1\ 0\ 2)$     & $r_2$   \\
					$(1\ 2\ 1\ 2\ 1)$ & $(0\ 2\ 0\ 2\ 1)$     & $r_4$   \\
					$(0\ 2\ 0\ 2\ 1)$ & $Z$    & $r_4$   \\
					$(0\ 2\ 1\ 0\ 2)$ & $Z$    & $r_2$   \\
					\hline
		\end{tabular}
		}}
	\end{center}
\end{table}

Once we have the solutions to the individual problems, we can write problems $P^{v-1,v}$ and $P^{v,v+1}$ following Section \ref{secprob2}. These problems and their solutions are shown in Tables \ref{probpares} and \ref{solpares} respectively. 

\begin{table}[!h]
	\begin{center}
				{\caption{\label{probpares}Lower ($l$) and upper ($u$) bounds for arcs $(o,d)$ in problems $P^{v-1,v}$ and $P^{v,v+1}$.}}
		{\scalebox{1}{
				\begin{tabular}[t]{cccc}
					\hline
					& $P^{v-1,v}$&&\\
					\hline
					$o$&  $d$     & $l$ & $u$  \\
					\hline
					$O$& $(1\ 1\ 2\ 1\ 2)$     &$ r_4$  & $r_4$   \\				
					$(1\ 1\ 2\ 1\ 2)$     & $(0\ 0\ 2\ 1\ 2)$     & $0$  & $\infty$   \\
					$(1\ 1\ 2\ 1\ 2)$     & $(0\ 1\ 1\ 1\ 2)$     & $0$  & $\infty$   \\
					$(0\ 0\ 2\ 1\ 2)$ & $Z$    & $r_1$  & $r_1$   \\
					 $(0\ 1\ 1\ 1\ 2)$ & $Z$    & $0$  & $r_4-r_1$   \\
					\hline
		\end{tabular}
		\hspace{2cm}
					\begin{tabular}[t]{cccc}
					\hline
					&$P^{v,v+1}$&&\\
					\hline
					$o$&  $d$     & $l$ & $u$  \\
					\hline
					$O$& $(1\ 2\ 2\ 0\ 2)$     &$ r_2$  & $r_2$   \\				
					$(1\ 2\ 2\ 0\ 2)$  & $(0\ 2\ 1\ 0\ 2)$     & 0  & $\infty$   \\
					$(0\ 2\ 1\ 0\ 2)$  & $Z$    & 0  & $r_2$   \\
									\hline
\end{tabular}		
		}}
	\end{center}
\end{table}

\begin{table}[!h]
	\begin{center}
				{\caption{\label{solpares}Solutions of  problems $P^{v-1,v}$ and $P^{v,v+1}$.}}
		{\scalebox{1}{
				\begin{tabular}[t]{ccc}
					\hline
					& $P^{v-1,v}$&\\
					\hline
					$o$&  $d$     & $f^{v-1,v}$  \\
					\hline
					$O$& $(1\ 1\ 2\ 1\ 2)$     &$ r_4$   \\				
					$(1\ 1\ 2\ 1\ 2)$     & $(0\ 0\ 2\ 1\ 2)$     & $r_1$   \\
					$(1\ 1\ 2\ 1\ 2)$     & $(0\ 1\ 1\ 1\ 2)$     & $r_4-r_1$   \\
					$(0\ 0\ 2\ 1\ 2)$ & $Z$    & $r_1$   \\
					 $(0\ 1\ 1\ 1\ 2)$ & $Z$    & $r_4-r_1$   \\
					\hline
		\end{tabular}
		\hspace{2cm}
					\begin{tabular}[t]{ccc}
					\hline
					&$P^{v,v+1}$&\\
					\hline
					$o$&  $d$     & $f^{v,v+1}$  \\
					\hline
					$O$& $(1\ 2\ 2\ 0\ 2)$     &$ r_2$    \\				
					$(1\ 2\ 2\ 0\ 2)$  & $(0\ 2\ 1\ 0\ 2)$     &  $r_2$   \\
					$(0\ 2\ 1\ 0\ 2)$  & $Z$    &  $r_2$   \\
									\hline
\end{tabular}		
		}}
	\end{center}
\end{table}

The solutions $f^{v-1}$, $f^v$, $f^{v+1}$, $f^{v-1,v}$ and $f^{v,v+1}$ are then used to write the explicit rates of the coupling. The coupled changes involving site $v$ (that is, those where the two components change together), given in terms \eqref{eq:gen3} to \eqref{eq:gen6} of Section \ref{sec:coc}, are shown in Table \ref{coupexp1}. The rest of changes involving site $v$ are made independently in marginals $(\eta_t)$ and $(\xi_t)$. It can be checked that, for these particular values of $\eta$ and $\xi$ at $\{v-2,v-1,v,v+1,v+2\}$, the coupled changes in Table \ref{coupexp1} coincide with those in Table 1 of \cite{GS}.
It is worth noting that for some other values our coupling deviates from the one given in  \cite{GS}. For instance, let the values at $\{v-2,v-1,v,v+1,v+2\}$ in $\eta$ and $\xi$ be $(0\ 0\ 1\ 0\ 0)$ and $(0\ 0\ 2\ 1\ 0)$, respectively. The coupling in Table 1 of \cite{GS} assigns a rate $r_3\wedge r_4$ to the change $((0\ 0\ 0\ 1\ 0),(0\ 0\ 1\ 2\ 0))$. However, since the change $(0\ 0\ 0\ 1\ 0)$ is not in $B_1^{+x}\cup B_1^{-xy}$ for any $x\in\mathcal{S}$, $y\sim x$  and the change $(0\ 0\ 1\ 2\ 0)$ is not in $B_2^{-x}\cup B_2^{-xy}$ for any $x\in\mathcal{S}$, $y\sim x$, the coupled change does not appear in any of the terms \eqref{eq:gen3}-\eqref{eq:genx}. This means that, even though our construction gives us large freedom to define the rates (since it is likely that the network flow problems have multiple solutions), not every OMC between two processes follows the formula in Definition \ref{def:gencoup}.

\begin{table}[!h]
	\begin{center}
				{\caption{\label{coupexp1}Coupled changes involving site $v$.}}
		{\scalebox{1}{
				\begin{tabular}[t]{ccc}
					\hline
					$\eta_t$&  $\xi_t$     & Rate  \\
					\hline
					$(0\ 0\ 2\ 1\ 2)$ &$(1\ 1\ 2\ 1\ 2)$     &$ r_1$   \\				
					$(0\ 1\ 1\ 1\ 2)$     & $(1\ 1\ 2\ 1\ 2)$     & $r_4-r_1$   \\
					$(0\ 2\ 1\ 0\ 2)$     & $(1\ 2\ 2\ 0\ 2)$     & $r_2$   \\
					\hline
\end{tabular}		
		}}
	\end{center}
\end{table}

\subsubsection{An example with non-conservative migrations}\label{excoup2}
It can be argued that, for the particular example discussed in Section \ref{exGScoup}, the changes of the coupling can be found through simple inspection and the method proposed in this paper, which solves several flow problems, is intricate. Nevertheless, it is important to emphasize that our method remains applicable regardless of the model's complexity and, if necessary, it can be implemented on a computer. 
We now show how to construct an OMC in an example where births, deaths and non-conservative migrations are present. 

Consider the processes $(\eta_t)$, $(\xi_t)$, with $W=\{0,1,\ldots,M\}$, for   $M>4$, and $S=\mathbb{Z}$. Also, let  $x\sim y$ if and only if   $\vert x-y\vert=1$. The process $(\eta_t)$ has: death of one individual at rate $\mu_1$ and of two individuals at rate $\mu_2$; conservative migrations of one individual at rate $\gamma_1$ and non-conservative migrations of one individual, giving birth to another individual at rate $\gamma_2$. The process $(\xi_t)$ has: arrivals of one individual at rate $\alpha_1$ and of two individuals at rate $\alpha_2$; death of one individual at rate $\mu_1$, of two individuals at rate $\mu_2$ and conservative migrations of one individual at rate $\beta$. The rates are shown in Table \ref{tasasinv}.

\begin{table}[!h]
	\begin{center}
				{\caption{\label{tasasinv}Rates of the processes in Section \ref{excoup2}. }}
		{\scalebox{1}{
				\begin{tabular}[t]{cc}
					\hline
					 $(\eta_t)$&\\
					\hline
					Change &  Rate\\
					\hline
					$i\rightarrow i-1$     &$\mu_1$   \\				
					$i\rightarrow i-2$     &$\mu_2 $  \\				
					$(i,j)\rightarrow (i-1,j+1)$     &$\gamma_1$   \\				
					$(i,j)\rightarrow (i+1,j-1)$     &$\gamma_1 $  \\				
					$(i,j)\rightarrow (i-1,j+2)$     &$\gamma_2  $ \\				
					$(i,j)\rightarrow (i+2,j-1)$     &$\gamma_2  $ \\				
					\hline
		\end{tabular}
		\hspace{2cm}
					\begin{tabular}[t]{cc}
					\hline
					 $(\xi_t)$&\\
					\hline
					Change &  Rate\\
					\hline
					$i\rightarrow i+1$     &$\alpha_1$   \\				
					$i\rightarrow i+2$     &$\alpha_2 $  \\				
					$i\rightarrow i-1$     &$\mu_1  $ \\				
					$i\rightarrow i-2$     &$\mu_2  $ \\				
					$(i,j)\rightarrow (i-1,j+1)$     &$\beta$ \\				
					$(i,j)\rightarrow (i+1,j-1)$     &$\beta  $ \\				
					\hline
\end{tabular}		
		}}
	\end{center}
\end{table}

As in Section \ref{exGScoup}, we  find the rates of the coupled process fixing a site $v$ and the values of the configurations at $(v-2,v-1,v,v+1,v+2)$. Namely, we choose $(1\ 1\ 2\ 1\ 1)$ for the values of $\eta$ and $(1\ 2\ 3\ 1\ 1)$ for the values of $\xi$. 
We assume the following relations among the parameters: $\gamma_2\le (\alpha_1+\alpha_2)\wedge(\alpha_2/2)$, $0\le\mu_2-\mu_1\le\gamma_1$ and  $0\le\beta-\gamma_1\le\gamma_1\wedge (\gamma_2/2)$. Some of these conditions are necessary for the construction of the OMC while others are chosen for the sake of simplicity in presentation.

We  proceed as in Section \ref{exGScoup}, by defining and solving the problems $P^{v-1}$, $P^v$ and $P^{v+1}$. 
The nodes, arcs and bounds of problems $P^{v-1}$, $P^v$ and $P^{v+1}$ are presented  in Table \ref{2probx-1xx+1} where, for simplicity, we  omit the arcs from $T_2^z$ to $T_1^z$, with respective lower and upper bounds 0 and $\infty$. However, it must be noted that there exists an arc from every $b\in T_2^z$ to every $a\in T_1^z$, such that $\eta_a\le\xi_b$; see Table \ref{tab:P1xbounds}. 
  Solutions for the problems, satisfying condition (f) of Lemma \ref{lem:f1xtilde}, are given in Table \ref{2solp1}. To keep the table small, only the (strictly positive) flows between arcs from $b\in T_2^z$ to  $a\in T_1^z$ are shown, as they  determine the flows on the rest of the arcs.

\begin{table}[!h]
	\begin{center}
				{\caption{\label{2probx-1xx+1}Lower ($l$) and upper ($u$) bounds for arcs $(o,d)$ in problems $P^{v-1}$, $P^v$ and $P^{v+1}$. Arcs from $b\in T_2^z$ to  $a\in T_1^z$ are omitted. }}
		{\scalebox{1}{
				\begin{tabular}[t]{cccc}
					\hline
					& Problem $P^{v-1}$&&\\
					\hline
					$o$&  $d$     & $l$ & $u$  \\
					\hline
					$O$& $(1\ 0\ 3\ 1\ 1)$     &$ \mu_2$  & $\mu_2$   \\				
					$O$&  $(1\ 3\ 3\ 1\ 1)$     &$ 0$  & $\alpha_1$   \\
					$O$&  $(1\ 4\ 3\ 1\ 1)$     &$ 0$  & $\alpha_2$   \\
					$O$&  $(0\ 3\ 3\ 1\ 1)$     &$ 0$  & $\beta$   \\
					$O$&  $(1\ 3\ 2\ 1\ 1)$     &$ 0$  & $\beta$   \\
					$(1\ 3\ 1\ 1\ 1)$ & $Z$    & $\gamma_2$  & $\gamma_2$   \\
					$(0\ 3\ 2\ 1\ 1)$ & $Z$    & $\gamma_2$  & $\gamma_2$   \\
					$(1\ 0\ 2\ 1\ 1)$ & $Z$    & $0$  & $\mu_1$   \\
					$(2\ 0\ 2\ 1\ 1)$ & $Z$    & $0$  & $\gamma_1$   \\
					$(3\ 0\ 2\ 1\ 1)$ & $Z$    & $0$  & $\gamma_2$   \\
					$(1\ 0\ 4\ 1\ 1)$ & $Z$    & $0$  & $\gamma_2$   \\
					$(1\ 0\ 3\ 1\ 1)$ & $Z$    & $0$  & $\gamma_1$   \\
					\hline
		\end{tabular}
		\hfill
					\begin{tabular}[t]{cccc}
					\hline
					&Problem $P^v$&&\\
					\hline
					$o$&  $d$     & $l$ & $u$  \\
					\hline
					$O$& $(1\ 2\ 1\ 1\ 1)$     &$ \mu_2$  & $\mu_2$   \\				
					$O$& $(1\ 2\ 4\ 1\ 1)$     &$ 0$  & $\alpha_1$   \\				
					$O$& $(1\ 2\ 5\ 1\ 1)$     &$ 0$  & $\alpha_2$   \\				
					$O$& $(1\ 1\ 4\ 1\ 1)$     &$ 0$  & $\beta$   \\				
					$O$& $(1\ 2\ 4\ 0\ 1)$     &$ 0$  & $\beta$   \\				
					$(1\ 1\ 4\ 0\ 1)$ & $Z$    & $\gamma_2$  & $\gamma_2$   \\
					$(1\ 0\ 4\ 1\ 1)$ & $Z$    & $\gamma_2$  & $\gamma_2$   \\
					$(1\ 1\ 1\ 1\ 1)$ & $Z$    & $0$  & $\mu_1$   \\
					$(1\ 1\ 0\ 1\ 1)$ & $Z$    & $0$  & $\mu_2$   \\
					$(1\ 2\ 1\ 1\ 1)$ & $Z$    & $0$  & $\gamma_1$   \\
					$(1\ 3\ 1\ 1\ 1)$ & $Z$    & $0$  & $\gamma_2$   \\
					$(1\ 1\ 1\ 2\ 1)$ & $Z$    & $0$  & $\gamma_1$   \\
					$(1\ 1\ 1\ 3\ 1)$ & $Z$    & $0$  & $\gamma_2$   \\
									\hline
\end{tabular}		
\hfill
					\begin{tabular}[t]{cccc}
					\hline
					&Problem $P^{v+1}$&&\\
					\hline
					$o$&  $d$     & $l$ & $u$  \\
					\hline
					$O$& $(1\ 2\ 3\ 0\ 1)$     &$\mu_1$  & $\mu_1$   \\				
					$O$& $(1\ 2\ 4\ 0\ 1)$     &$ \beta$  & $\beta$   \\				
					$O$& $(1\ 2\ 3\ 0\ 2)$     &$ \beta$  & $\beta$   \\				
					$O$& $(1\ 2\ 3\ 2\ 1)$     &$ 0$  & $\alpha_1$   \\				
					$O$& $(1\ 2\ 3\ 3\ 1)$     &$ 0$  & $\alpha_2$   \\				
					$O$& $(1\ 2\ 2\ 2\ 1)$     &$ 0$  & $\beta$   \\				
					$O$& $(1\ 2\ 3\ 2\ 0)$     &$ 0$  & $\beta$   \\				
					$(1\ 1\ 1\ 2\ 1)$ & $Z$    & $\gamma_1$  & $\gamma_1$   \\
					$(1\ 1\ 1\ 3\ 1)$ & $Z$    & $\gamma_2$  & $\gamma_2$   \\
					$(1\ 1\ 2\ 2\ 0)$ & $Z$    & $\gamma_1$& $\gamma_1$   \\
					$(1\ 1\ 2\ 3\ 0)$ & $Z$    & $\gamma_2$  & $\gamma_2$   \\
					$(1\ 1\ 2\ 0\ 1)$ & $Z$    & $0$  & $\mu_1$   \\
					$(1\ 1\ 3\ 0\ 1)$ & $Z$    & $0$  & $\gamma_1$   \\
					$(1\ 1\ 4\ 0\ 1)$ & $Z$    & $0$  & $\gamma_2$   \\
					$(1\ 1\ 2\ 0\ 2)$ & $Z$    & $0$  & $\gamma_1$   \\
					$(1\ 1\ 2\ 0\ 3)$ & $Z$    & $0$  & $\gamma_2$   \\
					\hline
		\end{tabular}
		}}
	\end{center}
\end{table}

\begin{table}[!h]
	\begin{center}
				{\caption{\label{2solp1}Solutions of problems $P^{v-1}$, $P^v$ and $P^{v+1}$. Only nonzero flows from from $b\in T_2^v$ to  $a\in T_1^v$ are shown.}}
		{\scalebox{1}{
				\begin{tabular}[t]{ccc}
					\hline
					& $P^{v-1}$&\\
					\hline
					$o$&  $d$     & $f^{v-1}$  \\
					\hline
					$(1\ 0\ 3\ 1\ 1)$ & $(1\ 0\ 2\ 1\ 1)$     & $\mu_1$   \\
					$(1\ 0\ 3\ 1\ 1)$ & $(1\ 0\ 3\ 1\ 1)$     & $\mu_2-\mu_1$   \\
					$(1\ 4\ 3\ 1\ 1)$ & $(1\ 3\ 1\ 1\ 1)$     & $\gamma_2$   \\
					$(1\ 4\ 3\ 1\ 1)$ & $(0\ 3\ 2\ 1\ 1)$     & $\gamma_2$   \\
					\hline
		\end{tabular}
		\hspace{0.5cm}
					\begin{tabular}[t]{ccc}
					\hline
					&Problem $P^v$&\\
					\hline
					$o$&  $d$     & $f^v$  \\
					\hline
					$(1\ 2\ 1\ 1\ 1)$  & $(1\ 1\ 0\ 1\ 1)$     & $\mu_2$   \\
					$(1\ 2\ 5\ 1\ 1)$  & $(1\ 1\ 4\ 0\ 1)$     & $\gamma_2$   \\
					$(1\ 2\ 5\ 1\ 1)$  & $(1\ 0\ 4\ 1\ 1)$     & $\gamma_2$   \\
									\hline
\end{tabular}		
\hspace{0.5cm}
					\begin{tabular}[t]{ccc}
					\hline
					&Problem $P^{v+1}$&\\
					\hline
					$o$&  $d$     & $f^{v+1}$  \\
					\hline
					$(1\ 2\ 3\ 0\ 1)$ & $(1\ 1\ 2\ 0\ 1)$     & $\mu_1$   \\
					$(1\ 2\ 4\ 0\ 1)$ & $(1\ 1\ 3\ 0\ 1)$     & $2\gamma_1-\beta$   \\
					$(1\ 2\ 4\ 0\ 1)$ & $(1\ 1\ 4\ 0\ 1)$     & $2(\beta-\gamma_1)$   \\
					$(1\ 2\ 3\ 0\ 2)$ & $(1\ 1\ 3\ 0\ 1)$     & $\beta-\gamma_1$   \\
					$(1\ 2\ 3\ 0\ 2)$ & $(1\ 1\ 2\ 0\ 2)$     & $\gamma_1$   \\
					$(1\ 2\ 3\ 3\ 1)$ & $(1\ 1\ 1\ 3\ 1)$     & $\gamma_2$   \\
					$(1\ 2\ 3\ 3\ 1)$ & $(1\ 1\ 2\ 3\ 0)$     & $\gamma_2$   \\
					$(1\ 2\ 2\ 2\ 1)$ & $(1\ 1\ 1\ 2\ 1)$     & $\gamma_1$   \\
					$(1\ 2\ 3\ 2\ 0)$ & $(1\ 1\ 2\ 2\ 0)$     & $\gamma_1$   \\
					\hline
		\end{tabular}
		}}
	\end{center}
\end{table}

After obtaining solutions to the individual problems, we formulate problems $P^{v-1,x}$ and $P^{v,v+1}$ in accordance with Section \ref{secprob2}. The details of these problems and their respective solutions are presented in Tables \ref{2probpares} and \ref{2solpares}, respectively. As in problems $P^{v-1},P^v,P^{v+1}$,  Table \ref{2probpares} does not show the arcs between $T_2^{wz}$ and $T_1^{wz}$ and, in Table \ref{2solpares}, only the positive flows on those arcs are displayed.

\begin{table}[!h]
	\begin{center}
				{\caption{\label{2probpares}Lower ($l$) and upper ($u$) bounds for arcs $(o,d)$ in problems $P^{v-1,v}$ and $P^{v,v+1}$. Arcs between $T_2^{wz}$ and $T_1^{wz}$ are omitted.}}
		{\scalebox{1}{
				\begin{tabular}[t]{cccc}
					\hline
					& $P^{v-1,v}$&&\\
					\hline
					$o$&  $d$     & $l$ & $u$  \\
					\hline
					$O$& $(1\ 2\ 5\ 1\ 1)$     &$ 0$  & $\gamma_2$   \\				
					$O$& $(1\ 4\ 3\ 1\ 1)$     &$ 0$  & $\gamma_2$   \\				
					$(1\ 0\ 4\ 1\ 1)$ & $Z$    & $\gamma_2$  & $\gamma_2$   \\
					 $(1\ 3\ 1\ 1\ 1)$ & $Z$    & $\gamma_2$  & $\gamma_2$   \\
					\hline
		\end{tabular}
		\hspace{2cm}
					\begin{tabular}[t]{cccc}
					\hline
					&$P^{v,v+1}$&&\\
					\hline
					$o$&  $d$     & $l$ & $u$  \\
					\hline
					$O$& $(1\ 2\ 4\ 0\ 1)$     &$ \beta$  & $\beta$   \\				
					$O$& $(1\ 2\ 5\ 1\ 1)$     &$ 0$  & $\gamma_2$   \\				
					$O$& $(1\ 2\ 3\ 3\ 1)$     &$ 0$  & $\gamma_2$   \\				
					$O$& $(1\ 2\ 2\ 2\ 1)$     &$ 0$  & $\gamma_1$   \\				
					$(1\ 1\ 4\ 0\ 1)$  & $Z$    & $\gamma_2$  & $\gamma_2$   \\
					$(1\ 1\ 1\ 2\ 1)$  & $Z$    & $\gamma_1$  & $\gamma_1$   \\
					$(1\ 1\ 1\ 3\ 1)$  & $Z$    & $\gamma_2 $ & $\gamma_2$   \\
					$(1\ 1\ 3\ 0\ 1)$  & $Z$    & 0  & $2\gamma_1-\beta$   \\
									\hline
\end{tabular}		
		}}
	\end{center}
\end{table}

\begin{table}[!h]
	\begin{center}
				{\caption{\label{2solpares}Solutions of  problems $P^{v-1,v}$ and $P^{v,v+1}$. Only positive flow of arcs between $T_2^{wz}$ and $T_1^{wz}$ is shown}}
		{\scalebox{1}{
				\begin{tabular}[t]{ccc}
					\hline
					& $P^{v-1,v}$&\\
					\hline
					$o$&  $d$     & $f^{v-1,v}$  \\
					\hline
					$(1\ 2\ 5\ 1\ 1)$     & $(1\ 0\ 4\ 1\ 1)$     & $\gamma_2$   \\
					$(1\ 4\ 3\ 1\ 1)$     & $(1\ 3\ 1\ 1\ 1)$     & $\gamma_2$   \\
					\hline
		\end{tabular}
		\hspace{2cm}
					\begin{tabular}[t]{ccc}
					\hline
					&$P^{v,v+1}$&\\
					\hline
					$o$&  $d$     & $f^{v,v+1}$  \\
					\hline
					$(1\ 2\ 4\ 0\ 1)$  & $(1\ 1\ 4\ 0\ 1)$     &  $2(\beta-\gamma_1)$   \\
					$(1\ 2\ 4\ 0\ 1)$  & $(1\ 1\ 3\ 0\ 1)$     &  $2\gamma_1-\beta$   \\
					$(1\ 2\ 5\ 1\ 1)$  & $(1\ 1\ 4\ 0\ 1)$     &  $\gamma_2-2(\beta-\gamma_1)$   \\
					$(1\ 2\ 3\ 3\ 1)$  & $(1\ 1\ 1\ 3\ 1)$     &  $\gamma_2$   \\
					$(1\ 2\ 2\ 2\ 1)$  & $(1\ 1\ 1\ 2\ 1)$     &  $\gamma_1$   \\
									\hline
\end{tabular}		
		}}
	\end{center}
\end{table}

With all the flows  we can determine the rates of the coupled process, from configurations $\eta$ and $\xi$, involving site $v$. The resulting rates are provided in Table \ref{coupexp2}, where the last column shows the corresponding term out of \eqref{eq:gen3} to \eqref{eq:gen6}, which includes the change. 

\begin{table}[!h]
	\begin{center}
				{\caption{\label{coupexp2}Coupled changes involving site $v$.}}
		{\scalebox{1}{
				\begin{tabular}[t]{cccc}
					\hline
					$\eta_t$&  $\xi_t$     & Rate  & Term of the coupling \\
					\hline
	$(1\ 1\ 0\ 1\ 1)$     &				$(1\ 2\ 1\ 1\ 1)$ &$ \mu_2$ &\eqref{eq:gen3}  \\				
	$(1\ 0\ 3\ 1\ 1)$     &				$(1\ 0\ 3\ 1\ 1)$ &$ \mu_2-\mu_1$ &\eqref{eq:gen3}  \\				
	$(1\ 1\ 3\ 0\ 1)$     &				$(1\ 2\ 4\ 0\ 1)$ &$ 2\gamma_1-\beta$ &\eqref{eq:gen4}  \\				
	$(1\ 0\ 4\ 1\ 1)$     &				$(1\ 2\ 5\ 1\ 1)$ &$ \gamma_2$ &\eqref{eq:gen6}  \\				
	$(1\ 3\ 1\ 1\ 1)$     &				$(1\ 4\ 3\ 1\ 1)$ &$ \gamma_2$ &\eqref{eq:gen6}  \\				
	$(1\ 1\ 1\ 3\ 1)$     &				$(1\ 2\ 3\ 3\ 1)$ &$ \gamma_2$ &\eqref{eq:gen6}  \\				
	$(1\ 1\ 1\ 2\ 1)$     &				$(1\ 2\ 2\ 2\ 1)$ &$ \gamma_1$ &\eqref{eq:gen6}  \\				
	$(1\ 1\ 4\ 0\ 1)$     &				$(1\ 2\ 5\ 1\ 1)$ &$ \gamma_2-2(\beta-\gamma_1)$ &\eqref{eq:gen6}  \\				
	$(1\ 1\ 4\ 0\ 1)$     &				$(1\ 2\ 4\ 0\ 1)$ &$ 2(\beta-\gamma_1)$ &\eqref{eq:genx}  \\				
					\hline
\end{tabular}		
		}}
	\end{center}
\end{table}

\section*{Acknowledgement}
Financial support  from grants PIA AFB-170001, Fondecyt 1161319 (Chile) and 
PID2020-116873GB-I00, funded by MCIN$/$AEI$/$10.13039$/$501100011033 (Spain),
 is gratefully acknowledged. The authors are members of the research group Modelos Estoc\'{a}sticos E46$\_$23R of DGA (Arag\'on, Spain).

\bibliographystyle{imsart-number} 
\bibliography{SIGDDR2}       
\end{document}